 \newtheorem{theorem}{Theorem}[section]
 \newtheorem{proposition}[theorem]{Proposition}
 \newtheorem{corollary}[theorem]{Corollary}
 \newtheorem{remark}[theorem]{Remark}
 \newtheorem{lemma}[theorem]{Lemma}
 \newtheorem{example}[theorem]{Example}
 \newcommand{\Ker}{\mathop{\rm Ker}\nolimits}
 \newcommand{\im}{\mathop{\rm Im}\nolimits}
 \newcommand{\diam}{\mathop{\rm diam}\nolimits}
 \newcommand{\hX}{{\widehat{X}}}
 \newcommand{\s}{{\mathbb{S}}}
 \newcommand{\NN}{{\mathbb{N}}}
 \newcommand{\ZZ}{{\mathbb{Z}}}
 \newcommand{\RR}{{\mathbb{R}}}
 \newcommand{\St}{{\mathrm{Stab}}}
\begin{document}

\title{GENERAL THEORY OF LIFTING SPACES}  

%\thanks{The author was supported by the Slovenian Research Agency grant P1-02920101.}

\author[Gregory R. Conner]{Gregory R. Conner$^*$} 
\address[Gregory R. Conner]{\newline\hspace*{3mm} Math Department, Brigham Young University, Provo, 
UT 84602, USA}
\email{conner@mathematics.byu.edu}
\thanks{$^{*}$ Supported by Simons Foundation collaboration grant 246221.}

\author[Petar Pave\v{s}i\'c]{Petar Pave\v{s}i\'c$^{**}$}
\address[Petar Pave\v{s}i\'c]{\newline\hspace*{3mm} Faculty of Mathematics and Physics, University of Ljubljana, Slovenija}
\email{petar.pavesic@fmf.uni-lj.si}
\thanks{$^{**}$ Supported by the Slovenian Research Agency program P1-0292 and grants 
N1-0083, N1-0064.}

%\date{Received: date / Accepted: date}
% The correct dates will be entered by the editor

\begin{abstract}
In his classical textbook on algebraic topology Edwin Spa\-ni\-er developed the theory of covering 
spaces within a more general framework of lifting spaces (i.e., Hurewicz fibrations with unique path-lifting 
property). Among other, Spanier proved that for every space $X$ there exists a universal lifting space,
which however need not be simply connected, unless the base space $X$ is semi-locally simply connected. 
The question on what exactly is the fundamental group of the universal space was left unanswered. 

The main source of lifting spaces are inverse limits of covering spaces over $X$, 
or more generally, over some inverse system of spaces converging to $X$. Every metric space $X$ can be  
obtained as a limit of an inverse system of polyhedra, and so inverse limits of covering spaces 
over the system yield lifting spaces over $X$. They are related to the geometry (in particular the fundamental
group) of $X$ in a similar way as the covering spaces over polyhedra are related to the fundamental group of 
their base. Thus lifting spaces appear as a natural replacement for the concept of 
covering spaces over base spaces with bad local properties.

In this paper we develop a general theory of lifting spaces and prove that they are preserved by products, 
inverse limits and other important constructions. We show that maps from $X$ to polyhedra give rise to 
coverings over $X$ and use that to  prove that for a connected, locally path 
connected and paracompact $X$, the fundamental group of the above-mentioned Spanier's universal space is
precisely the intersection of all Spanier groups associated to open covers 
of $X$, and that the later coincides with the shape kernel of $X$. 

We examine in more detail lifting spaces over $X$ that arise as inverse limits of coverings 
over some approximations of $X$. We construct an exact sequence relating the fundamental group of $X$ with 
the fundamental group and the set of path-components of the lifting space, study relation between the group 
of deck transformations of the lifting space with the fundamental group of $X$ and prove an existence 
theorem for lifts of maps into inverse limits of covering spaces. 

In the final section we consider lifting spaces over non-locally path connected base and relate them to the
fibration properties of the so called hat space (or 'Peanification') construction.
\ \\[3mm]
{\it Keywords}: covering space, lifting space, inverse system, deck transformation, shape fundamental group, shape kernel \\
{\it AMS classification:} Primary 57M10; Secondary 55R05, 55Q05, 54D05.
\end{abstract}

\maketitle

%------------------------------------------------------------------------------------------------------------------------------------------------------------------------------------
%  INTRODUCTION
%------------------------------------------------------------------------------------------------------------------------------------------------------------------------------------

\section{Introduction}

The theory of covering spaces is best suited for spaces with nice local properties. In particular, if $X$ 
is a semi-locally simply-connected space, then there is a complete correspondence between coverings 
of $X$ and the subgroups of its fundamental group $\pi_1(X)$ (see \cite[Section II, 5]{Spanier}). 
On the other hand, Shelah \cite{Shelah} showed that if the fundamental group of a Peano continuum $X$ is not
countable, then it is has at least one `bad' point, around which it is not semi-locally simply-connected. 
It then follows that most subgroups
of $\pi_1(X)$ do not correspond to a covering space. Thus, the nice relation between covering spaces and 
$\pi_1(X)$ breaks apart as soon as $X$ is not locally nice. 
Note that the situation considered by Shelah is by no means exotic: Peano continua with uncountable 
fundamental group appear naturally as attractors of dynamical systems \cite{Hata85}, as fractal spaces 
\cite{Mandelbrot85,Massopust89}, as boundaries of non-positively curved groups \cite{KapovichKleiner00}, 
and in many other important contexts.

Attempts to extend covering space theory to more general spaces include Fox overlays \cite{Fox}, 
and more recently, 
universal path-spaces by Fisher and Zastrow \cite{Fisher-Zastrow} (see also \cite{FRVZ}), and Peano 
covering maps by Brodskiy, Dydak, Labuz and Mitra \cite{BDLM}.  
Presently there is still an animated debate concerning the correct way 
to define generalized covering spaces, fundamental groups and other related concepts in order to retain 
as much of the original theory as possible.

In \cite[Chapter II]{Spanier} Spanier introduced coverings spaces and maps
as a special case of a more general concept of fibrations with unique path-lifting property. The 
latter turn out to be much more 
flexible than coverings. In particular, one can always construct the universal fibration over a given base
space $X$,
where universality is interpreted as being the initial object in the corresponding category. This universal
object however need not be simply connected,
and the basic question of what is its fundamental group is left open in Spanier's book. 

As a part of his approach Spanier characterized subgroups of $\pi_1(X)$ that give rise to covering spaces by showing 
that a covering subgroup of $\pi_1(X)$ must contain all $\mathcal{U}$-small loops with respect to some 
cover $\mathcal{U}$ of $X$. Consequently elements of $\pi_1(X)$ that are small with respect to all covers 
of $X$ cannot be 'unravelled' in any covering space and even in any fibration with unique path-lifting
property. This result was one of the motivations for our work.

In this paper we give a systematic treatment of lifting spaces and their properties, with particular emphasis 
to inverse limits of covering spaces, universal lifting spaces and their fundamental groups. In Section 2 we
give an alternative definition of lifting spaces and derive their basic properties, which include stability
under arbitrary products, compositions and inverse limits. Then we study the structure of the category of
lifting spaces over a given base $X$ and prove the existence of the universal lifting space
over $X$. In Section 3 we study subgroups of the fundamental group $\pi_1(X)$ that correspond to covering
spaces over $X$ (without the assumption that $X$ is semi-locally simply-connected). The main result is 
Theorem \ref{thmshapegroup} in which we prove that fundamental group of the universal lifting space over $X$ 
is precisely the shape kernel of $X$ (i.e. the kernel of the homomorphism from the fundamental group of $X$ to 
its shape fundamental group). In Section 4 we restrict our attention to a special class of lifting
spaces over $X$ that can be constructed as limits of inverse systems of covering spaces over polyhedral 
approximations of $X$. We show that much of the theory 
of covering spaces can be extended to this more general class. We construct an exact sequence relating
the fundamental group of the base with the fundamental group and the set of path-components of the lifting
space. Furthermore, we relate the group of deck transformations of a lifting space with the fundamental 
group of the base and the density of the path-components of the lifting space. At this point one may expect
that every lifting space over a given base space $X$ can be obtained as 
a limit of an inverse system of covering spaces over $X$ or its approximations. However, a closer look reveals that this a subtle question, in \cite{Conner-Herfort-Pavesic} we constructed several classes of 
lifting spaces that are not inverse limits of coverings, and in \cite{Conner-Herfort-Pavesic2} we proved 
a detection and classification theorem for lifting spaces that are inverse limits of finite coverings. 
On the other hand, we can 
construct a universal lifting spaces for the class of lifting spaces studied in Section 4. This is achieved in Section 5 by taking a polyhedral expansion of the base
space and considering the corresponding inverse system of universal coverings. It turns out that the so obtained universal lifting space is in many 
aspects analogous to the universal covering space, especially when the base space is locally path-connected. 
The first main result is Theorem \ref{thm Peano continua} which summarizes the basic properties of lifting 
spaces whose base is a Peano continuum. In addition we give in Theorem \ref{thm:lifting criterion} a 
criterion for maps between inverse limits of covering spaces. Finally, in Section 6 we partially extend our
results to spaces that are not locally path-connected. To this end we consider  the hat construction
(or `peanification'),  
which to a non-locally path-connected space assigns the 'closest' locally path-connected one. The main result
is that by applying the hat construction on a locally 
compact metric space we obtain a fibration (in fact, a lifting projection) if and only if the hat space is locally compact.

%------------------------------------------------------------------------------------------------------------------------------------------------------------------------------------
%   FUNDAMENTAL GROUPS OF LIFTING SPACES
%------------------------------------------------------------------------------------------------------------------------------------------------------------------------------------
\ \\

\section{Fundamental groups of lifting spaces}

Much of the exposition of covering spaces in \cite[Chapter II]{Spanier} is done in a more general setting of (Hurewicz) fibrations 
with unique path lifting property. To work with Hurewicz fibrations we will use the following standard characterization in terms of lifting functions.
Every map $p\colon L\to X$ induces a map $\overline p\colon L^I\to  X^I\times L$, $\overline p\colon \gamma\mapsto (p\circ\gamma, \gamma(0))$. 
In general $\overline p$ is not surjective, in fact its image is the subspace 
$$X^I\sqcap L:=\{ (\gamma, l)\in X^I\times L \mid p(l)=\gamma(0)\}\subset X^I \times L.$$ A \emph{lifting function} for $p$ 
is a section of $\overline p$, that is, a map $\Gamma\colon X^I\sqcap L\to L^I$ such that $\overline p\circ\Gamma$ is the identity map on $X^I\sqcap L$. 
Then we have the following basic result:

\begin{theorem} (cf. \cite[Theorem 1.1]{Pavesic-Piccinini})
A map $p\colon L\to X$ is a Hurewicz fibration if and only if it admits a continuous lifting function $\Gamma$. 
\end{theorem}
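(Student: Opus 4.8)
The plan is to prove both implications by exploiting the exponential correspondence between homotopies $Y\times I\to L$ and maps $Y\to L^I$. Throughout I would equip all mapping spaces with the compact-open topology and use the fact that, since $I$ is locally compact Hausdorff, for every space $Y$ the adjunction $\mathrm{Map}(Y\times I, L)\cong \mathrm{Map}(Y, L^I)$ is a natural homeomorphism, and that the evaluation map $\mathrm{ev}\colon X^I\times I\to X$ is continuous. These two point-set facts are the only analytic input the argument needs; everything else is formal.

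For the easy direction, suppose $p$ admits a continuous lifting function $\Gamma$. To verify the homotopy lifting property I would take an arbitrary test space $Y$, a homotopy $H\colon Y\times I\to X$ and an initial lift $\tilde h\colon Y\to L$ with $p\circ\tilde h=H(-,0)$. Adjointing $H$ yields $\hat H\colon Y\to X^I$, and the compatibility condition guarantees that $y\mapsto(\hat H(y),\tilde h(y))$ lands in $X^I\sqcap L$. Composing with $\Gamma$ and then un-adjointing produces a map $\tilde H\colon Y\times I\to L$; the section identity $\overline p\circ\Gamma=\mathrm{id}$ translates directly into $p\circ\tilde H=H$ and $\tilde H(-,0)=\tilde h$, which is exactly the required lift, its continuity being immediate from continuity of $\Gamma$ together with the exponential law.

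The substantive direction is the converse. Here the idea is to apply the homotopy lifting property not to an arbitrary space but to the \emph{tautological} test space $Y=X^I\sqcap L$. On this space I would consider the homotopy $H\colon (X^I\sqcap L)\times I\to X$, $H((\gamma,l),t)=\gamma(t)$, which is continuous because it factors through the evaluation map, together with the initial lift given by the projection $(\gamma,l)\mapsto l$; the defining condition $p(l)=\gamma(0)$ is precisely what makes this an admissible starting lift. Since $p$ is a Hurewicz fibration, there is a lift $\tilde H\colon (X^I\sqcap L)\times I\to L$ of $H$ restricting to the projection at time $0$. Un-adjointing $\tilde H$ gives a continuous map $\Gamma\colon X^I\sqcap L\to L^I$, and a direct check shows $\overline p\circ\Gamma=\mathrm{id}$: the relation $p\circ\tilde H=H$ recovers the first coordinate $\gamma$, while $\tilde H(-,0)=\mathrm{proj}_L$ recovers the basepoint $l$. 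Thus $\Gamma$ is the desired lifting function.

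I expect the main obstacle to be bookkeeping rather than conceptual: one must be scrupulous about which maps are being adjointed, and in verifying that the various adjunctions are genuinely continuous (not merely continuous on slices), which is exactly where local compactness of $I$ is used. The genuinely clever step—and the one that makes the converse work at all—is the choice of the universal test space $X^I\sqcap L$ carrying its own canonical homotopy, so that a single application of the homotopy lifting property simultaneously produces a coherent lift for every path and every admissible starting point.
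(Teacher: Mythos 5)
Your proof is correct. Note that the paper itself gives no proof of this statement---it is quoted with a ``cf.'' citation to \cite[Theorem 1.1]{Pavesic-Piccinini}---and your argument is precisely the classical one found there (and in Spanier): the easy direction by adjointing the lifting function, and the converse by applying the homotopy lifting property once to the tautological test space $X^I\sqcap L$ with its canonical homotopy $((\gamma,l),t)\mapsto\gamma(t)$ and initial lift $(\gamma,l)\mapsto l$. Both the use of local compactness of $I$ for the exponential law and the verification $\overline p\circ\Gamma=\mathrm{id}$ are handled correctly.
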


Moreover, unique path-lifting property for $p$ means that for $(\gamma,l),(\gamma',l')\in X^I\sqcap L$ the equality $\Gamma(\gamma,l)=\Gamma(\gamma',l')$ implies 
that $(\gamma,l)=(\gamma',l')$. This condition is clearly  is equivalent to the injectivity of $\overline p$, which leads to the following definition.

A map $p\colon L\to X$ is a \emph{lifting projection} if $\overline p\colon L^I\to X^I\sqcap L$ is a homeomorphism, or equivalently, if the following diagram
$$\xymatrix{
L^I \ar[d]_{p\circ -} \ar[r]^{{\rm ev}_0} & L \ar[d]^p\\
X^I \ar[r]_{{\rm ev}_0} & X}
$$
is a pull-back in the category of topological spaces. Given a path $\gamma\colon I\to X$ and an element $l\in L$ 
with $p(l)=\gamma(0)$ we will denote by $\langle\gamma,l\rangle$
the unique path in $L$ which starts at $l$ and covers $\gamma$ (i.e. $\overline p(\langle\gamma,l\rangle)=(\gamma,l)$).
The \emph{lifting space} is the triple $(L,p,X)$ where $p\colon L\to X$ is a lifting projection. We will occasionally 
abuse the notation and refer to the space $L$ or the map $p\colon L\to X$ itself as a lifting space over $X$.

Clearly, every covering map is a lifting projection. Before giving further examples we list some basic properties 
of lifting spaces (cf. \cite{Spanier}, Section 2.2., short proofs are included here to illustrate the efficiency of the alternative definition).

\begin{proposition}
\label{propliftings}
\begin{enumerate}
\item Arbitrary pull-backs, compositions, products, fibred products and inverse limits of lifting spaces are 
lifting spaces.
\item In a lifting space $p\colon L\to X$, given a path $\gamma\colon I\to X$ the formula 
$f_\gamma\colon l\mapsto \langle\gamma,l\rangle(1)$
determines a homeomorphism $f_\gamma\colon p^{-1}(\gamma(0))\to p^{-1}(\gamma(1))$ between the fibres. In particular, 
if $X$ is path-connected then any two fibres of $p$ are homeomorphic.
\item A fibration $p\colon L\to X$ is a lifting space if, and only if its fibres are totally path-disconnnected 
(i.e. admit only constant paths).
\end{enumerate}
\end{proposition}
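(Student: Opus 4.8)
The plan for (1) is to lean entirely on the pull-back reformulation of a lifting projection: $p$ is a lifting projection precisely when the evaluation-at-$0$ square displayed above is a pull-back, i.e.\ when the natural transformation $\mathrm{ev}_0\colon(-)^I\Rightarrow\mathrm{id}$ is cartesian at $p$. Since $I=[0,1]$ is locally compact Hausdorff, the path-space functor $(-)^I$ is right adjoint to $-\times I$ and hence preserves all limits. For products and inverse limits this is immediate: $(\prod_\alpha L_\alpha)^I=\prod_\alpha L_\alpha^I$ (respectively the analogous identity for inverse limits) intertwines the evaluation maps, so the evaluation square of the (inverse) limit is the (inverse) limit of the evaluation squares, and a limit of pull-back squares is a pull-back. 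Compositions are handled by stacking: given lifting projections $p\colon L\to M$ and $q\colon M\to X$, the evaluation squares of $p$ and $q$ share the edge $M^I\to M$, and the pasting lemma shows that their vertical composite --- which is exactly the evaluation square of $q\circ p$, using $(q\circ p)_*=q_*\circ p_*$ --- is again a pull-back. Finally, pull-backs (and fibred products, which are a special case) follow from preservation of pull-backs by $(-)^I$ together with the defining identity $L^I\cong X^I\times_X L$: for $f\colon Y\to X$ and $L'=Y\times_X L$ one computes $(L')^I\cong Y^I\times_{X^I}L^I\cong Y^I\times_{X^I}(X^I\times_X L)\cong Y^I\times_X L\cong Y^I\times_Y L'$, which is exactly the assertion that the evaluation square of $p'\colon L'\to Y$ is a pull-back.

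For (2), that $f_\gamma$ lands in $p^{-1}(\gamma(1))$ is clear since $p\langle\gamma,l\rangle(1)=\gamma(1)$. Continuity is read off from the factorisation $f_\gamma\colon p^{-1}(\gamma(0))\xrightarrow{\,l\mapsto(\gamma,l)\,}X^I\sqcap L\xrightarrow{\,\overline p^{\,-1}\,}L^I\xrightarrow{\,\mathrm{ev}_1\,}L$, in which the middle arrow is continuous because $\overline p$ is a homeomorphism (this is the definition of a lifting projection) and $\overline p^{\,-1}(\gamma,l)=\langle\gamma,l\rangle$. For the inverse I use the reversed path $\bar\gamma(t)=\gamma(1-t)$: if $\lambda=\langle\gamma,l\rangle$ then the reverse $\bar\lambda$ covers $\bar\gamma$ and starts at $\lambda(1)=f_\gamma(l)$, so by unique path lifting $\bar\lambda=\langle\bar\gamma,f_\gamma(l)\rangle$ and therefore $f_{\bar\gamma}(f_\gamma(l))=\bar\lambda(1)=\lambda(0)=l$; the symmetric computation gives $f_\gamma\circ f_{\bar\gamma}=\mathrm{id}$. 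Thus $f_\gamma$ is a homeomorphism with inverse $f_{\bar\gamma}$, and the path-connected case follows by choosing a path between the two base-points.

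For (3) I first reduce to unique path lifting. Because $p$ is assumed to be a fibration, $\overline p$ is a continuous surjection onto $X^I\sqcap L$ that admits a continuous section $\Gamma$ (its lifting function); hence $\overline p$ is a homeomorphism if and only if it is injective, in which case $\Gamma=\overline p^{\,-1}$. So $p$ is a lifting projection exactly when it has unique path lifting, and it remains to match the latter with total path-disconnectedness of the fibres. One direction is easy: if $\sigma$ is a path in the fibre $p^{-1}(x)$, then $\sigma$ and the constant path at $\sigma(0)$ are two lifts of the constant path $c_x$ with the same initial point, so unique path lifting forces $\sigma$ to be constant.

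The converse is the crux of the proposition. Suppose all fibres are totally path-disconnected and let $\omega_0,\omega_1$ be two lifts of a path $\gamma$ with $\omega_0(0)=\omega_1(0)=e_0$. Consider the $s$-constant homotopy $G\colon I\times I\to X$, $G(t,s)=\gamma(t)$, and prescribe a lift on the three sides $\{0\}\times I\cup I\times\{0,1\}$ of the square by putting the constant $e_0$ on the left edge and $\omega_0,\omega_1$ on the bottom and top edges (these agree at the two corners). Since the pair $(I\times I,\ \{0\}\times I\cup I\times\{0,1\})$ is homeomorphic to $(I\times I,\ I\times\{0\})$, extending this partial lift to a full lift $\tilde G$ of $G$ is an instance of the homotopy lifting property of the fibration $p$. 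The fourth edge $s\mapsto\tilde G(1,s)$ then lies over the constant $G(1,s)=\gamma(1)$, so it is a path in the fibre $p^{-1}(\gamma(1))$ joining $\tilde G(1,0)=\omega_0(1)$ to $\tilde G(1,1)=\omega_1(1)$; total path-disconnectedness makes it constant, whence $\omega_0(1)=\omega_1(1)$. Applying this to every restriction $\gamma|_{[0,r]}$ gives $\omega_0=\omega_1$, i.e.\ unique path lifting. I expect this converse to be the main obstacle, since the naive argument that the locus $\{t:\omega_0(t)=\omega_1(t)\}$ is clopen is unavailable without local triviality; the essential point is to route the comparison of the two lifts through a homotopy whose lifted boundary produces a path inside a single fibre.
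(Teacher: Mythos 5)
Your proof is correct, and for parts (1) and (2) it follows essentially the paper's own route: the paper likewise reduces everything in (1) to the cartesian-square characterization (pull-backs via a commutative cube, compositions via pasting two pull-back squares, products via the identification $\bigl(\prod_i L_i\bigr)^I=\prod_i L_i^I$, fibred products as pull-backs of products along the diagonal, inverse limits via $\bigl(\varprojlim L_i\bigr)^I=\varprojlim L_i^I$), and it proves (2) by the same computation showing $f_{\bar\gamma}$ is a two-sided inverse of $f_\gamma$; your explicit continuity factorisation of $f_\gamma$ through $\overline p^{\,-1}$ and $\mathrm{ev}_1$ is a small welcome addition that the paper leaves implicit. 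Where you genuinely diverge is the converse in (3). The paper fixes the canonical lift $\Gamma(p\gamma,\gamma(0))$ supplied by the lifting function and shows that every lift $\gamma$ agrees with it: for each $s$ it lifts the standard contraction of $\overline{(p\gamma)_s}\cdot(p\gamma)_s$ to the constant path, starting the lifted homotopy at the concatenation $\overline{\gamma_s}\cdot\Gamma(p\gamma,\gamma(0))_s$, and reads off from the boundary of the lifted square a path inside a fibre joining $\gamma(s)$ to $\Gamma(p\gamma,\gamma(0))(s)$. You instead compare two arbitrary lifts $\omega_0,\omega_1$ symmetrically, lifting the $s$-constant homotopy $G(t,s)=\gamma(t)$ rel the prescribed values on three sides of the square, using the homeomorphism of pairs $(I^2,\{0\}\times I\cup I\times\{0,1\})\cong(I^2,I\times\{0\})$, and extracting the comparing fibre path from the fourth edge. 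Both arguments hinge on the same key idea — route the comparison of lifts through a boundary piece of a lifted square that lies in a single fibre — but yours invokes the standard relative homotopy lifting property of Hurewicz fibrations and avoids all path-reversal and concatenation bookkeeping, which makes it cleaner and manifestly symmetric in the two lifts, while the paper's version avoids the pair homeomorphism by a judicious choice of initial condition. Your preliminary reduction (a fibration with injective $\overline p$ automatically has $\overline p$ a homeomorphism, with inverse the lifting function) is also correct and is implicitly what the paper uses in passing between the two definitions.
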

\begin{proof}
\begin{enumerate}
\item All claims follow from general facts about pull-backs as we now show.

{\bf Pull-backs:} Let $p\colon L\to X$ be a lifting projection and let $f\colon B\to X$ be any map. Then in the following commutative cube
$$\xymatrix@=4mm{
        &  {B\sqcap L} \ar[rr] \ar[dd] &                  & L\ar[dd]\\
{(B\sqcap L)^I} \ar[rr] \ar[dd] \ar[ru]&      & {L^I} \ar[dd] \ar[ru]\\
        &  B \ar[rr]     &             & X\\
{B^I}\ar[rr]\ar[ru] &  & {X^I} \ar[ru]}
$$
the front, back and right vertical face are pull-backs, which by abstract nonsense implies that the left vertical square is also a pull-back. 
Therefore the pull-back projection $B\sqcap L\to B$ is a lifting projection.

{\bf Compositions:} If $p\colon L\to X$ and $q\colon K\to L$ are lifting projections then in the following
diagram
$$\xymatrix{
K^I \ar[r] \ar[d]& L^I \ar[r] \ar[d] & X^I\ar[d] \\
K \ar[r]_q & L \ar[r]_q & X}
$$
the two inner squares are pull-backs, which implies that the outer square is a pull-back, hence $p\circ q$ is
also lifting projection.

{\bf Products:} If $\{p_i\colon L_i\to X_i\}$ is a family of lifting projections, then the diagram
$$\xymatrix{
{\prod_i L^I_i} \ar[r] \ar[d] &  {\prod_i L_i}\ar[d]^{\prod p_i} \\
{\prod_i X^I_i} \ar[r] & {\prod_i X_i} }$$ 
is a product of pull-back diagrams, hence a pull-back diagram itself. It follows that $\prod p_i$ is a lifting 
projection.

{\bf Fibred products:} The fibred product of a family $\{p_i\colon L_i\to X\}_{i \in\mathcal{I}}$ of 
lifting spaces over $X$ is obtained by pulling back
their product along the diagonal map $X\to X^\mathcal{I}$, hence is a lifting space by the above.

{\bf Inverse limits:} An inverse system of lifting spaces is given by a directed 
set $\mathcal{I}$, two $\mathcal{I}$-indexed inverse systems $\mathbf{L}=(L_i, u_{ij}\colon L_j \to L_i)$
and $\mathbf{X}=(X_i, v_{ij}\colon X_j \to X_i)$, and a morphism of systems 
$\mathbf{p}\colon \mathbf{L}\to\mathbf{X}$, such that $p_i\colon L_i\to X_i$ is 
a lifting projection for all $i\in\mathcal{I}$. In order to prove that the limit map 
$$\lim_{\longleftarrow} p_i\colon \lim_{\longleftarrow} L_i\to \lim_{\longleftarrow} X_i $$
is a lifting projection it is sufficient to observe that we have the natural identifications
$$\left( \lim_{\longleftarrow}L_i\right)^I= \lim_{\longleftarrow} L_i^I,\;\;\;\;\;
\left( \lim_{\longleftarrow}X_i\right)^I\sqcap \left( \lim_{\longleftarrow}L_i\right)=\lim_{\longleftarrow}\left(X_i^I\sqcap L_i\right)\;,$$
and that the projection $\displaystyle\left( \lim_{\longleftarrow}L_i\right)^I\longrightarrow\left( \lim_{\longleftarrow}X_i\right)^I\sqcap 
\left( \lim_{\longleftarrow}L_i\right)$ is 
a homeomorphism because it is the inverse limit of homeomorphisms $\overline p_i\colon L_i^I\stackrel{\approx}{\longrightarrow} X_i^I\sqcap L_i$.

\item Let $\overline\gamma$ denote the inverse path of the path $\gamma$. We claim that the map $f_{\overline\gamma}\colon p^{-1}(y)\to p^{-1}(x),\;\; l\mapsto \langle\overline\gamma,l\rangle(1)$ is the inverse
of $f_\gamma$. Indeed, since $\overline p\bigl(\overline{\langle\gamma,l\rangle}\bigr)=\bigl(\overline\gamma,\langle\gamma,l\rangle(1)\bigr)$ we get the equality
$\overline{\langle\gamma,l\rangle}=\langle\overline\gamma,\langle\gamma,l\rangle(1)\rangle$ and so 
$$f_{\overline\gamma}(f_\gamma(l))=\langle\overline\gamma,\langle\gamma,l\rangle(1)\rangle(1)=\langle\overline\gamma,\langle\gamma,l\rangle(1)\rangle(1)=\overline{\langle\gamma,l\rangle}(1)=
\langle\gamma,l\rangle(0)=l.$$
That $f_\gamma f_{\overline\gamma}$ is also the identity map is proved analogously.

\item Assume $p\colon L\to X$ is a lifting space and let $\gamma$ be a path in $p^{-1}(x)\subset L$. 
Then $\overline p(\gamma)=
({\rm const}_x,\gamma(0))=\overline p({\rm const}_{\gamma(0)})$, hence $\gamma={\rm const}_{\gamma(0)}$.

Conversely, assume that all fibres of $p$ admit only constant paths. Since $p$ is a fibration there is a map 
$\Gamma\colon X^I\sqcap L\to L^I$ such that $\overline p\circ\Gamma={\rm Id}$, and we only need to prove that 
$\gamma=\Gamma(p\circ\gamma, \gamma(0))$ for all $\gamma\colon I\to L$. For $s\in I$ let $\gamma_s$ denote 
the path $\gamma_s(t):=\gamma(st)$, and let $H$ be the standard homotopy starting at $\overline{(p\gamma)_s}\cdot (p\gamma)_s$ and ending
at ${\rm const}_{p\gamma(0)}$. Let moreover $\widetilde H\colon I\times I\to L$ be a lifting of $H$ starting at 
$\widetilde H|_{0\times I}=\overline{\gamma_s}\cdot\Gamma(p\gamma,\gamma(0))_s$. It is easy to check that the restriction of 
$\widetilde H$ to $I\times 0\cup 1\times I\cup I\times 1$ determines a path in the fibre $p^{-1}(p\gamma(t))$ from $\gamma(s)$ to
$\Gamma(p\gamma,\gamma(0))(s)$, so by the assumption $\gamma(s)=\Gamma(p\gamma,\gamma(0))(s)$. 
\end{enumerate}
\end{proof}

We have recently proved in \cite[Theorem 3.2]{Pavesic} that, under very general assumptions, lifting spaces are preserved by the mapping space
construction, which yields a host of examples of lifting spaces that are very far from being coverings. The following examples 
illustrate typical ways how a lifting space can fail to be a covering space.

\begin{example}
\label{ex infinite product}
Let $p\colon \mathbb{R}\to S^1$ be the usual covering of the circle. 
Then the countable product $\displaystyle p^\NN \colon \mathbb{R}^\NN\to (S^1)^\NN$ 
is a lifting space by Proposition \ref{propliftings}, but is not a covering space. In fact, 
the fibre of $p$ is not a discrete space, being an infinite product of $\mathbb{Z}$. Even more drastically, one can easily verify 
that the infinite product of circles is not semi-locally simply connected  at any point, which means that it cannot have at all a simply 
connected covering space. 
\end{example}

\begin{example}
\label{ex dyadic solenoid}
Another basic example is given by the following inverse limit of $2^n$-fold coverings
$$\xymatrix{
S^1 \ar[d]_2 & S^1 \ar[d]_4 \ar[l]_2 & S^1 \ar[d]_8 \ar[l]_2 & \cdots \ar[l] & \mathbb{S}_2 \ar[l] \ar[d]^p\\
S^1  & S^1 \ar@{=}[l] & S^1 \ar@{=}[l] & \cdots \ar@{=}[l] & S^1 \ar@{=}[l] }
$$
which presents the dyadic solenoid $\mathbb{S}_2$ as a lifting space over the circle. By varying the choice of coverings 
we obtain an entire family of non-equivalent lifting spaces over the circle
leading to the following interesting problem: is it possible to classify all lifting spaces over the circle? 
One should keep in mind that this necessarily require the study of non-locally path-connected total spaces. In fact,  
Spanier \cite[Proposition 2.4.10]{Spanier} proved that a lifting space $p\colon L\to X$ 
over a locally path-connected and semi-locally sim\-ply-connected base $X$ is a covering space if, and only if $L$ is locally path-connected. 
\end{example}

\begin{example}
\label{ex hat}
Let us describe a simple but useful construction that sometimes allow to extend results to 
spaces that are not locally path-connected. Given any space $X$ let $\widehat X$ denote 
the set $X$ endowed with the minimal topology that contains all path-components of open sets in 
$X$. Clearly, if $X$ is locally path-connected then $\widehat X=X$, 
but for non locally path-connected spaces we obtain a strictly stronger topology, so the identity map $\iota\colon\widehat X\to X$ is a continuous bijection but not a homeomorphism. For example, if $W$ is the standard Warsaw circle, 
then one can easily check that $\widehat W$ is homeomorphic to the interval $[0,1)$. Note that this
construction was called \emph{Peanification} in \cite{BDLM}, but some care is needed, because in general 
$\widehat X$ is not a Peano space.

The hat-construction is clearly functorial (in fact, together with the projection to the original
space it forms an idempotent augmented functor), so that for every map $f\colon Y\to X$ we obtain a commutative diagram
$$\xymatrix{
{\widehat Y} \ar[r]^{\widehat f} \ar[d] & {\widehat X} \ar[d]\\
Y \ar[r]_f & X
}$$
It follows that every map from a locally path-connected space to $X$ lifts uniquely to a map to $\widehat X$, so in particular, the projection from the hat space admits
unique path liftings. Even more, it is always a Serre fibration, but it is not in general a Hurewicz fibration (and hence not a lifting space). We are going
to study this question in detail in the last section of the paper.
\end{example}

Every covering space and every locally trivial fibration is an open map. In view of the above examples it would be interesting to know 
whether all lifting projections over a locally path-connected base are open maps.

\ \\
In the solenoid example above the total space is not path-connected. Clearly, if $p\colon L\to X$ is a lifting 
space then the restriction of $p$ to any path-component of $L$ is a lifting space, too. 
In order to study the fundamental groups of lifting spaces, we now restrict our attention to based path-connected spaces. 
Let $\mathsf{Lift}_X$ denote the category  whose objects are path-connected open lifting spaces over $X$, and morphisms are fibre-preserving maps between them. All spaces have base-points and all maps
are base-point preserving, but we systematically omit the base-points from the notation. The category $\mathsf{Lift}_X$ 
shares many properties with its full subcategory of covering spaces $\mathsf{Cov}_X$ but is in some aspects more flexible.

\begin{proposition}
\label{propcatlift}
Morphisms in $\mathsf{Lift}_X$ are lifting projections and $\mathsf{Lift}_X$ is an ordered category 
(i.e. there is at most one morphism between any two objects).
\end{proposition}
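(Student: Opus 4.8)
The plan is to establish the two assertions separately, the first furnishing the main tool for the second. For the claim that every morphism is a lifting projection, I would isolate and prove a \emph{cancellation property} for lifting projections: if $q\colon K\to X$ and $p\colon L\to X$ are lifting projections and $f\colon L\to K$ is fibre-preserving, i.e.\ $q\circ f=p$, then $f$ is itself a lifting projection. Concretely, this amounts to showing that the natural map $\overline f\colon L^I\to K^I\sqcap L$, $\gamma\mapsto(f\circ\gamma,\gamma(0))$, is a homeomorphism.

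To prove the cancellation property I would introduce the auxiliary map $\Phi\colon K^I\sqcap L\to X^I\sqcap L$, $(\delta,l)\mapsto(q\circ\delta,l)$, which is well defined because $q(\delta(0))=q(f(l))=p(l)$, and continuous since it is the restriction of $(q\circ-)\times\mathrm{id}_L$. A direct computation gives $\Phi\circ\overline f=\overline p$. As $\overline p$ is a homeomorphism (since $p$ is a lifting projection), the set map $\Psi:=\overline p^{\,-1}\circ\Phi\colon K^I\sqcap L\to L^I$ is continuous and satisfies $\Psi\circ\overline f=\overline p^{\,-1}\circ\overline p=\mathrm{id}_{L^I}$; in particular $\overline f$ is injective. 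For the reverse composite I would take $(\delta,l)\in K^I\sqcap L$, lift the path $q\circ\delta$ through $p$ starting at $l$ to obtain $\gamma:=\langle q\circ\delta,l\rangle$, and observe that $f\circ\gamma$ and $\delta$ are then two $q$-lifts of $q\circ\delta$ issuing from the common point $f(l)=\delta(0)$; unique path lifting for $q$ forces $f\circ\gamma=\delta$, whence $\overline f(\Psi(\delta,l))=\overline f(\gamma)=(\delta,l)$. Thus $\Psi$ is a continuous two-sided inverse of $\overline f$, so $f$ is a lifting projection. This is the step I expect to carry the real content; the only delicate point is obtaining a genuine homeomorphism rather than a mere bijection, and the explicit formula $\overline f^{\,-1}=\overline p^{\,-1}\circ\Phi$ resolves the continuity of the inverse cleanly.

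For the ordered-category claim I would invoke the standard uniqueness of lifts. Given two morphisms $f,g\colon L\to K$ with $q\circ f=p=q\circ g$ and $f,g$ agreeing on base points, I would use path-connectedness of $L$ to join an arbitrary point $l$ to the base point by a path $\gamma$; then $f\circ\gamma$ and $g\circ\gamma$ are $q$-lifts of $p\circ\gamma$ starting at the same point, so unique path lifting for $q$ yields $f\circ\gamma=g\circ\gamma$, and evaluation at $1$ gives $f(l)=g(l)$. Since $l$ is arbitrary, $f=g$. The essential hypotheses here are precisely path-connectedness of the domain together with the base-point condition built into $\mathsf{Lift}_X$; it is worth noting that openness of the objects is used in neither part of the argument.
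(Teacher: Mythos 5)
Your proof is correct and follows essentially the same route as the paper: the paper's terse chain of natural identifications $K^I\sqcap L=(X^I\sqcap K)\sqcap L=X^I\sqcap L=L^I$ is exactly your composite $\Psi=\overline p^{\,-1}\circ\Phi$, and your unique-path-lifting verification that $\Psi$ is a two-sided inverse of $\overline f$ just makes explicit what the paper leaves implicit in the phrase ``induced by $f$''. The uniqueness argument via path-connectedness and base points is likewise identical to the paper's.
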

\begin{proof}
Let $f\colon L\to K$ be a morphism between lifting spaces $p\colon L\to X$ and $q\colon K\to X$. Then we have the natural identification 
$K^I\sqcap L=(X^I\sqcap K)\sqcap L=X^I\sqcap L=L^I$ induced by $f$, therefore $f$ is a lifting projection.

If $f,g\colon L\to K$ are morphisms in $\mathsf{Lift}_X$ then the unique path-lifting property imply that $f$ and $g$ coincide on path-components. Since $f$ and $g$ coincide on the base-point,
and since $L$ is path-connected, we have $f=g$. 
\end{proof}

Clearly the category $\mathsf{Lift}_X$ has equalizers as there are no parallel pairs of distinct maps. 
It also has products: one can easily check that the categorical product of a set of 
lifting spaces $\{L_i\to X\}_{i\in\mathcal{I}}$ is obtained by taking the path-component of 
the fibred product of $\{L_i\to X\}_{i\in\mathcal{I}}$ containing the base-point.  
Since categorical products and equalizers suffice for the construction of any set-indexed categorical limit 
we obtain the following fact.

\begin{proposition}
\label{proplimits}
Category $\mathsf{Lift}_X$ has arbitrary small (i.e. set-indexed) limits. 
\end{proposition}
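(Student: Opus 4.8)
The plan is to reduce completeness to the two limit types already discussed before the statement, using the standard categorical fact that a category admitting all small products together with equalizers of every parallel pair is complete: the limit of a functor $F\colon J\to\mathsf{Lift}_X$ out of a small category $J$ is then recovered as the equalizer of the canonical pair $\prod_{j}F(j)\rightrightarrows\prod_{(u\colon j\to k)}F(k)$. So I would only need to verify, carefully, that $\mathsf{Lift}_X$ has both equalizers and arbitrary small products, and then quote this construction.

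Equalizers cost nothing. By Proposition \ref{propcatlift} the category $\mathsf{Lift}_X$ is ordered, so for any parallel pair $f,g\colon L\to K$ we automatically have $f=g$; hence $\mathrm{id}_L\colon L\to L$ serves as their equalizer and the required universal property is vacuous.

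The substance is in the products. Given a set $\{p_i\colon L_i\to X\}_{i\in\mathcal I}$ of objects, I would form their fibred product over $X$, which is a lifting space over $X$ by Proposition \ref{propliftings}(1). This fibred product is typically disconnected, so I pass to the path-component $L$ of the base-point; since the restriction of a lifting projection to a path-component is again a lifting projection (as noted earlier), $p\colon L\to X$ is a path-connected lifting space, and one verifies that it remains an open lifting space, so $L$ is a genuine object of $\mathsf{Lift}_X$. To check the universal property, take any object $M$ with morphisms $g_i\colon M\to L_i$ over $X$: the universal property of the fibred product in the category of spaces yields a unique base-point-preserving continuous map $g\colon M\to\sqcap_i L_i$ with the prescribed components, and because $M$ is path-connected while $g$ preserves base-points, the image of $g$ lands in the single path-component $L$. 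Thus $g$ corestricts to a (continuous) morphism $M\to L$, and its uniqueness is again forced by ordered-ness.

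The one genuinely delicate point is this passage to the base-point path-component and its compatibility with the universal property, which is also what makes the bare fibred product fail to be the categorical product. The argument survives precisely because all objects are path-connected and all morphisms are base-point preserving, so every competing cone is automatically confined to the chosen component; what then remains is the routine confirmation that this component is still an open lifting projection and that the induced factorization is continuous. Having produced products and equalizers, the displayed equalizer-of-products construction yields the limit of an arbitrary small diagram, which completes the proof.
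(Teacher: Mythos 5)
Your proposal is correct and follows essentially the same route as the paper: the paper likewise notes that equalizers are trivial because $\mathsf{Lift}_X$ is ordered, constructs the product as the base-point path-component of the fibred product, and then invokes the standard fact that products plus equalizers yield all small limits. Your write-up merely fills in the verification of the product's universal property, which the paper leaves as "one can easily check."
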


\begin{corollary} 
\label{coruniv}
For every  path-connected space $X$ the category $\mathsf{Lift}_X$ has the universal (initial)
object $\widetilde X$. The correspondence $X\mapsto \widetilde X$ determines an idempotent augmented functor, 
as $f\colon X\to Y$ induce the commutative diagram
$$\xymatrix{
\widetilde{X}\ar[r]^{\tilde f} \ar[d] & \widetilde{Y}\ar[d]\\
X \ar[r]_f & Y
}$$
\end{corollary}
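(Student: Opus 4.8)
The plan is to construct $\widetilde X$ as a specific categorical limit and then verify the functoriality diagram. For the existence of the initial (universal) object, the standard category-theoretic fact is that a category with all small limits has an initial object provided it satisfies the solution set condition; alternatively, and more concretely here, I would take the limit of the entire category $\mathsf{Lift}_X$ viewed as a diagram over itself. Since Proposition~\ref{proplimits} guarantees arbitrary small limits, the key point is to argue that it suffices to take the limit over a \emph{set} of representatives rather than a proper class. I would establish this by bounding the cardinality of path-connected open lifting spaces over $X$: by the unique path-lifting property, each fibre injects into a fixed set (for instance, homotopy classes of paths in $X$ modulo the action of the relevant subgroup), so up to isomorphism there is only a set of objects in $\mathsf{Lift}_X$. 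Taking the product over this set and then the appropriate equalizers (trivially, since the category is ordered by Proposition~\ref{propcatlift}) yields the limit $\widetilde X$, which is initial: for any object $L$ the projection $\widetilde X\to L$ is the unique morphism, uniqueness being automatic from the ordered structure.

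Next I would verify that $\widetilde X$ is genuinely an object of $\mathsf{Lift}_X$, i.e.\ that it is a path-connected open lifting space. That it is a lifting space follows from the closure of lifting projections under products, fibred products and limits established in Proposition~\ref{propliftings}(1). Path-connectedness and openness require the explicit passage to the path-component of the base-point in the fibred product, exactly as described in the remark preceding Proposition~\ref{proplimits}; I would simply invoke that construction and note that restricting to a path-component of an open lifting space again yields an open lifting space.

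For the functoriality statement, given $f\colon X\to Y$ I would produce the induced map $\tilde f\colon\widetilde X\to\widetilde Y$ as follows. Pull back the universal lifting space $\widetilde Y\to Y$ along $f$ to obtain a lifting space over $X$ (using Proposition~\ref{propliftings}(1), pull-backs of lifting spaces are lifting spaces), then restrict to the base-point path-component to land in $\mathsf{Lift}_X$. By initiality of $\widetilde X$ there is a unique morphism $\widetilde X\to f^*\widetilde Y$, and composing with the canonical map $f^*\widetilde Y\to\widetilde Y$ covering $f$ gives $\tilde f$ making the square commute. The commutativity of the diagram and the uniqueness of $\tilde f$ both follow from the ordered structure of the categories involved, so no further checking is needed there.

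The main obstacle I anticipate is the set-theoretic one: justifying that $\mathsf{Lift}_X$ has only a set of isomorphism classes, so that the limit in Proposition~\ref{proplimits} can actually be applied to the whole category. The idempotence of the augmented functor—that the canonical map $\widetilde{\widetilde X}\to\widetilde X$ is an isomorphism—should then fall out formally: since $\widetilde X$ is already initial in $\mathsf{Lift}_X$ and is itself an object of that category, the universal lifting space over $\widetilde X$ must coincide with $\widetilde X$, because the identity exhibits $\widetilde X$ as initial among lifting spaces over itself. I would phrase this last point carefully, noting that the base of the new category has changed from $X$ to $\widetilde X$, and that the relevant comparison uses the projection $\widetilde X\to X$ together with initiality on both levels.
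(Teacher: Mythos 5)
Your proposal is correct and follows essentially the same route as the paper: bound the isomorphism classes of $\mathsf{Lift}_X$ by a set via the identification of fibres with cosets of $p_\sharp(\pi_1(L))$ in $\pi_1(X)$, then take the categorical product over a set of representatives (Proposition~\ref{proplimits}), which is automatically initial since the category is ordered. Your extra details on functoriality via pull-backs and on idempotence are exactly the ``general properties of initial objects'' that the paper's proof leaves implicit, so there is no substantive difference.
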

\begin{proof} 
We first observe that the isomorphism classes of objects in $\mathsf{Lift}_X$ form a set. In fact 
by \cite[Theorem 2.3.9]{Spanier} the points on any fibre of a lifting space $p\colon L\to X$ are 
in bijection with the set cosets of the subgroup $p_\sharp(\pi_1(L))$ in $\pi_1(X)$.
This means that every lifting space over $X$ corresponds to a choice of a subgroup of $\pi_1(X)$, together 
with a choice of a topology on the cartesian product of the set $X$
with the set of cosets of $p_\sharp(\pi_1(L))$ in $\pi_1(X)$. We conclude that the class of possible lifting 
spaces over $X$ whose total spaces is path-connected, forms a set. By Proposition \ref{proplimits} the 
categorical product of a set of representatives of all objects in $\mathsf{Lift}_X$ exists and is clearly
the initial object of the category. The other properties of the universal lifting space follow from
general properties of initial objects.
\end{proof}

As for covering spaces, it is of crucial importance to determine the fundamental group of the universal lifting space.
The following result is a step in that direction. 

\begin{proposition}
\label{propfunprod}
Fundamental group of a categorical product in $\mathsf{Lift}_X$ is the intersection of the fundamental groups 
of its factors. 
\end{proposition}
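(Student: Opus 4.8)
The plan is to translate the statement into the language of subgroups of $\pi_1(X,x_0)$ and then apply the standard lifting criterion coordinate by coordinate. Write the family as $\{p_i\colon L_i\to X\}_{i\in\mathcal{I}}$ with base points $\ell_i\in L_i$ over $x_0\in X$. First I would note that unique path lifting makes each $(p_i)_\sharp\colon\pi_1(L_i,\ell_i)\to\pi_1(X,x_0)$ injective: if $p_i\circ\delta$ is null-homotopic, lifting the null-homotopy and using uniqueness of path lifting shows $\delta$ itself is null-homotopic. We may therefore identify $\pi_1(L_i,\ell_i)$ with its image $H_i:=(p_i)_\sharp\pi_1(L_i,\ell_i)\le\pi_1(X,x_0)$ — exactly the subgroup figuring in the coset description of \cite[Theorem 2.3.9]{Spanier}. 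Let $P$ be the fibred product of the family, which is a lifting space by Proposition \ref{propliftings}, let $L\subseteq P$ be the path-component of $\ell:=(\ell_i)_i$ (that is, the categorical product, as described above), and set $H:=p_\sharp\pi_1(L,\ell)$. The goal becomes the equality $H=\bigcap_i H_i$.

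The first ingredient I would record is the \emph{lifting criterion}: for a based lifting space $p\colon L\to X$, a loop $\gamma$ at $x_0$ satisfies $[\gamma]\in p_\sharp\pi_1(L,\ell)$ if and only if its canonical lift $\langle\gamma,\ell\rangle$ closes up at $\ell$. The ``only if'' direction is immediate from the definition; for the ``if'' direction one uses homotopy lifting together with unique path lifting to see that whether $\langle\gamma,\ell\rangle$ is a loop depends only on $[\gamma]$, so that the endpoint map descends to homotopy classes. This is precisely the information packaged in the cited Spanier bijection, so I would either invoke it directly or prove it in one line.

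The second, decisive ingredient is that lifting into a fibred product is performed coordinate-wise. Given a loop $\gamma$ at $x_0$, the tuple $(\langle\gamma,\ell_i\rangle)_i$ is a path in $\prod_i L_i$ whose coordinates all cover $\gamma$, hence it lands in $P$, starts at $\ell$, and covers $\gamma$; by uniqueness of path lifting it \emph{is} the canonical lift $\langle\gamma,\ell\rangle$. Being a path issuing from $\ell$, it remains inside the path-component $L$, so $p_\sharp\pi_1(L,\ell)$ sees exactly the loops whose lift in $P$ closes up. Since $(\langle\gamma,\ell_i\rangle)_i$ closes up at $\ell$ if and only if each coordinate $\langle\gamma,\ell_i\rangle$ closes up at $\ell_i$, the two ingredients combine to give $[\gamma]\in H \iff [\gamma]\in H_i$ for all $i$, i.e. $H=\bigcap_i H_i$, as claimed.

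The argument is short once this viewpoint is fixed, and the main point requiring care is the lifting criterion in this generality: it must be established using only unique path lifting and homotopy lifting (both available because $p$ is a fibration), never local triviality, since the total spaces here need not be locally nice. A secondary subtlety, dispatched above, is the passage from the fibred product $P$ to its path-component $L$; because every canonical lift is a path starting at the base point, it automatically stays in $L$, so replacing $P$ by $L$ alters neither $\pi_1$ at $\ell$ nor the set of loops that close up. I expect this path-component bookkeeping, rather than any hard computation, to be the only place where one might slip.
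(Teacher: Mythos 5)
Your proposal is correct and takes essentially the same route as the paper's proof: both rest on Spanier's identification of $\pi_1$ of a lifting space with a subgroup of $\pi_1(X)$ (a class lies in $p_\sharp\pi_1(L,\ell)$ exactly when its canonical lift closes up), together with the observation that the canonical lift into the fibred product is the tuple of coordinate lifts and hence closes up if and only if every coordinate does, the loop automatically staying in the base-point path-component. One cosmetic slip worth fixing: in your lifting criterion the \emph{if} direction (lift closes up $\Rightarrow$ class in the image) is the one immediate from the definition, while the \emph{only if} direction is the one requiring homotopy lifting plus unique path lifting (monodromy); this swap does not affect the argument.
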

\begin{proof}
Let $p\colon L\to X$ be the categorical product of the family of lifting spaces $\{p_i\colon L_i\to X\}$.
By Proposition \ref{propcatlift} the projection maps $q_i\colon L\to L_i$ are lifting projections, so 
by \cite[Theorem 2.3.4]{Spanier} they induce monomorphisms $(q_i)_\sharp\colon \pi_1(L)\to\pi_1(L_i)$.
It follows that $\pi_1(L)\cong\im p_\sharp\le\pi_1(X)$ is contained in $\bigcap_i \im (p_i)_\sharp$.
For the converse implication, let the loop $\alpha\colon S^1\to X$ represent an element of 
$\bigcap_i \pi_1(L_i)\cong\bigcap_i \im (p_i)_\sharp\le\pi_1(X)$. 
By the unique path lifting property
there are unique lifts $\alpha_i\colon S^1\to L_i$ for the loop $\alpha$, so they define an element
$\widetilde \alpha\in\pi_1(L)$. We therefore conclude that $\pi_1(L)\cong\bigcap_i\pi_1(L_i)$.
\end{proof}

In order to achieve a more precise identification of the fundamental group of $\widetilde X$ we need a better understanding
of subgroups of the $\pi_1(X)$ that correspond to covering spaces of $X$.

%------------------------------------------------------------------------------------------------------------------------------------------------------------------------------------
%   COVERING SUBGROUPS
%------------------------------------------------------------------------------------------------------------------------------------------------------------------------------------
\ \\

\section{Covering subgroups}

In this section we consider the question which subgroups of the fundamental group of $X$ correspond to coverings 
of $X$ and relate them to the shape kernel of $X$. 

Let $(X,x_0)$ be a based space. A subgroup $G\le\pi_1(X,x_0)$ is a \emph{covering subgroup} if there is 
a covering space $p\colon (\widetilde X,\tilde x_0)\to (X,x_0)$, such that  $\im p_\sharp=G$.
Spanier gave a simple characterization of covering subgroups in terms of $\mathcal{U}$-small loops. 
Given a covering $\mathcal{U}$ of $X$ a based loop in $(X,x_0)$ is said to be \emph{$\mathcal{U}$-small} if it is 
of the form 
$\gamma\cdot\alpha\cdot\overline\gamma$ where $\alpha$ is a (non-based) loop whose image is contained in some element of 
$\mathcal{U}$ and $\gamma$ is a path in $X$ connecting $x_0$ and $\alpha(0)$. 
We denote by $\pi_1(X,x_0;\mathcal{U})$
the subgroup of $\pi_1(X,x_0)$ generated by classes of $\mathcal{U}$-small loops. 
It is clear that $\pi_1(X,x_0;\mathcal{U})$
is always a normal subgroup of $\pi_1(X,x_0)$, and that $\pi_1(X,x_0;\mathcal{V})$ is contained in $\pi_1(X,x_0;\mathcal{U})$
whenever $\mathcal{V}$ is a covering of $X$ that refines $\mathcal{U}$. 

The covering subgroups can be characterized as follows:

\begin{theorem}[\cite{Spanier} Lemma 2.5.11 and Theorem 2.5.13]
\label{thm Spanier}
Let $X$ be connec\-ted and locally path-connected. Then $G\le\pi_1(X,x_0)$ is a covering subgroup if,
and only if $G$ contains a subgroup of the form $\pi_1(X,x_0;\mathcal{U})$ for some  cover $\mathcal U$ of $X$.
\end{theorem}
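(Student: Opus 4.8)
The plan is to prove both directions of the characterization, relating covering subgroups to the existence of a cover $\mathcal{U}$ with $\pi_1(X,x_0;\mathcal{U})\le G$.

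For the forward direction, suppose $G=\im p_\sharp$ for a covering $p\colon (\widetilde X,\tilde x_0)\to(X,x_0)$. First I would use the defining property of a covering to choose, for each $x\in X$, an open neighborhood $U_x$ that is evenly covered by $p$; since $X$ is locally path-connected I may refine each $U_x$ to be path-connected. This yields an open cover $\mathcal{U}=\{U_x\}$. The key step is to show $\pi_1(X,x_0;\mathcal{U})\le G$, i.e., that every $\mathcal{U}$-small loop lifts to a loop in $\widetilde X$. A $\mathcal{U}$-small loop has the form $\gamma\cdot\alpha\cdot\overline\gamma$ where $\alpha$ lies in some $U\in\mathcal{U}$. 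Because $U$ is evenly covered, the lift of $\alpha$ starting at the endpoint of the lift of $\gamma$ stays in a single sheet over $U$; since that sheet maps homeomorphically to $U$ and $\alpha$ is a loop, its lift is again a loop. Hence the lift of $\gamma\cdot\alpha\cdot\overline\gamma$ is a loop in $\widetilde X$, so its class lies in $\im p_\sharp=G$. As $G$ is a subgroup and these classes generate $\pi_1(X,x_0;\mathcal{U})$, the inclusion follows.

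For the converse, suppose $\pi_1(X,x_0;\mathcal{U})\le G$ for some cover $\mathcal{U}$. Here I would invoke Spanier's construction of the covering space associated to a subgroup: one builds $\widetilde X$ as a set of equivalence classes of paths starting at $x_0$, where two paths $\gamma,\gamma'$ with the same endpoint are equivalent when $[\gamma\cdot\overline{\gamma'}]\in G$, and topologizes $\widetilde X$ using the $\mathcal{U}$-small loops. The hypothesis $\pi_1(X,x_0;\mathcal{U})\le G$ together with local path-connectedness guarantees that this topology is well-defined and that the natural projection $p\colon\widetilde X\to X$ is a genuine covering map, with basic open sets mapping homeomorphically to path-connected members of $\mathcal{U}$. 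One then checks $\im p_\sharp=G$: a loop in $X$ lifts to a loop in $\widetilde X$ precisely when its class lies in $G$, by construction of the equivalence relation.

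The main obstacle is the converse direction, specifically verifying that the topology on $\widetilde X$ built from $G$ and $\mathcal{U}$ actually makes $p$ a covering map rather than merely a continuous open surjection. The crux is that the condition $\pi_1(X,x_0;\mathcal{U})\le G$ is exactly what forces the sheets over a member of $\mathcal{U}$ to be disjoint and to project homeomorphically; without it the ``sheets'' could overlap or fail to be open. Since the statement is attributed to Spanier's Lemma 2.5.11 and Theorem 2.5.13, the cleanest route is to cite that construction and concentrate on the bookkeeping showing the $\mathcal{U}$-small generators behave as required, rather than rebuilding the covering space from scratch.
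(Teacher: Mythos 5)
Your proposal is correct and follows exactly the argument the paper is pointing to: the paper gives no proof of its own here, citing Spanier's Lemma 2.5.11 and Theorem 2.5.13, and your two directions (lifting $\mathcal{U}$-small loops over evenly covered, path-connected neighborhoods to get $\pi_1(X,x_0;\mathcal{U})\le\im p_\sharp$, and the path-class construction of the covering space associated to $G$ for the converse) are precisely the content of those cited results. You also correctly identify the crux of the converse, namely that $\pi_1(X,x_0;\mathcal{U})\le G$ is what makes the sheets of the constructed space disjoint and homeomorphic to their images, so nothing is missing.
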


A natural source of covering subgroups are continuous maps into polyhedra (or more generally, into semi-locally simply connected spaces).

\begin{corollary}
\label{cor kernels}
Let $f\colon X\to K$ be a map from $X$ to a semi-locally simply connected space $K$. 
Then the kernel of the induced homomorphism $f_\sharp\colon \pi_1(X,x_0)\to \pi_1(K,f(x_0))$ is a covering subgroup of $\pi_1(X,x_0)$.
\end{corollary}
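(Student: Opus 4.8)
The plan is to deduce the statement from Spanier's characterization of covering subgroups, Theorem \ref{thm Spanier}, which applies under the standing hypotheses that $X$ is connected and locally path-connected. It suffices to produce an open cover $\mathcal{U}$ of $X$ for which $\pi_1(X,x_0;\mathcal{U})\subseteq\ker f_\sharp$: once this is done, $\ker f_\sharp$ contains a Spanier subgroup of the required form and Theorem \ref{thm Spanier} identifies it as a covering subgroup at once. So the whole proof reduces to manufacturing the right cover out of the map $f$ and the hypothesis on $K$.

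To build $\mathcal{U}$ I would start on the target. Since $K$ is semi-locally simply connected, every point $k\in K$ has an open neighborhood $V_k$ such that the inclusion-induced homomorphism $\pi_1(V_k,k)\to\pi_1(K,k)$ is trivial, and the family $\{V_k\}$ is an open cover of $K$. Pulling back along the continuous map $f$ gives an open cover $\{f^{-1}(V_k)\}$ of $X$, which I would then refine to the open cover $\mathcal{U}$ consisting of the path-components of the sets $f^{-1}(V_k)$; these are open precisely because $X$ is locally path-connected. The point of the refinement is that a loop lying in a member $U\in\mathcal{U}$ has path-connected image, so $f$ carries it into a single path-component of some $V_k$, where the triviality of $\pi_1(V_k,k)\to\pi_1(K,k)$ can be brought to bear.

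With $\mathcal{U}$ in hand I would verify the inclusion $\pi_1(X,x_0;\mathcal{U})\subseteq\ker f_\sharp$ on generators. A $\mathcal{U}$-small loop has the form $\gamma\cdot\alpha\cdot\overline\gamma$ with the image of $\alpha$ contained in some $U\in\mathcal{U}$; then $f\circ\alpha$ is a loop lying in a single path-component of the corresponding $V_k$, hence null-homotopic in $K$, so that $f_\sharp[\gamma\cdot\alpha\cdot\overline\gamma]=[f\gamma]\cdot[f\alpha]\cdot[f\gamma]^{-1}=1$. Since $\ker f_\sharp$ is a subgroup, it then contains the group generated by all such classes, namely $\pi_1(X,x_0;\mathcal{U})$, and Theorem \ref{thm Spanier} completes the argument.

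The one delicate step — and the place where the hypotheses must be used with care — is exactly the claim that $f\circ\alpha$ is null-homotopic in $K$. The triviality furnished by semi-local simple connectivity is a priori based at the chosen point $k$, whereas $f\circ\alpha$ is based at an arbitrary point of $V_k$; passing between the two requires a change-of-basepoint argument carried out \emph{within a single path-component} of $V_k$. This is why the refinement of $\mathcal{U}$ to path-components (and hence the local path-connectedness of $X$) is essential, and the crux is to confirm that the relevant loops really do land in a path-component of $V_k$ on which inclusion into $\pi_1(K)$ kills them. Everything else is routine bookkeeping with the definition of $\mathcal{U}$-small loops and the functoriality of $f_\sharp$.
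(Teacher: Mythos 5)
Your overall route is the same as the paper's---produce an open cover $\mathcal{U}$ of $X$ with $\pi_1(X,x_0;\mathcal{U})\subseteq\Ker f_\sharp$ and quote Theorem \ref{thm Spanier}---but there is a genuine gap at exactly the step you single out as the crux, and your refinement to path-components does not close it. You use the pointed form of semi-local simple connectivity (each $k$ has a neighbourhood $V_k$ with $\pi_1(V_k,k)\to\pi_1(K,k)$ trivial). A path-component $U$ of $f^{-1}(V_k)$ does map into a single path-component $W$ of $V_k$, but nothing forces $W$ to be the path-component of $k$, and on the other path-components the inclusion into $K$ can be injective on $\pi_1$. Concrete failure: let $K=\{p\}\cup\bigcup_{n\ge 1}C_n\subset\RR^2$, where the $C_n$ are pairwise disjoint circles shrinking to $p$. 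Each circle is clopen in $K$, so $K$ is semi-locally simply connected in your pointed sense (the path-component of $p$ in any neighbourhood is $\{p\}$, and points of the circles have contractible arc neighbourhoods), yet every neighbourhood of $p$ contains whole circles $C_n$, each essential in $K$. Take $X=S^1$ and $f$ a homeomorphism onto a circle $C_n$ lying inside your chosen $V_p$: then $f^{-1}(V_p)=X$ is itself a member of your cover $\mathcal{U}$, so $\pi_1(X,x_0;\mathcal{U})=\pi_1(X,x_0)\cong\ZZ$ while $\Ker f_\sharp=1$. Thus the inclusion you need can fail for the cover you construct; the assertion ``lies in a single path-component of $V_k$, hence null-homotopic in $K$'' is a non sequitur.

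The repair is small but essential: tether the cover to points of $X$. For each $x\in X$ let $U_x$ be the path-component of $x$ in $f^{-1}(V_{f(x)})$ (open by local path-connectedness of $X$) and put $\mathcal{U}=\{U_x\mid x\in X\}$. Since $U_x$ is path-connected and contains $x$, its image lies in the path-component $W$ of $f(x)$ in $V_{f(x)}$. If $\alpha$ is a loop in $U_x$ based at $u$, choose a path $\delta$ in $U_x$ from $x$ to $u$; then $(f\circ\delta)\cdot(f\circ\alpha)\cdot\overline{(f\circ\delta)}$ is a loop at $f(x)$ inside $V_{f(x)}$, hence null-homotopic in $K$, and therefore $f\circ\alpha$ is null-homotopic as a based loop at $f(u)$. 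With this $\mathcal{U}$ your computation of $f_\sharp$ on $\mathcal{U}$-small loops and the appeal to Theorem \ref{thm Spanier} go through verbatim. For comparison, the paper's own proof sidesteps the issue by positing a cover of $K$ by sets whose inclusion kills $\pi_1$ at \emph{every} basepoint (the unbased form of semi-local simple connectivity) and pulling back those sets wholesale; under that stronger reading of the hypothesis no refinement is needed at all, while under the pointed reading you adopted, the tethered cover above is the correct fix.
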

\begin{proof}
Let $\mathcal{U}$ be a cover of $K$, such that for all $U\in\mathcal{U}$ the inclusion $U\hookrightarrow K$ induces a trivial homomorphism on the 
fundamental group. Then the group $\pi_1(X,x_0;f^{-1}{\mathcal U})$ 
is contained in the kernel of $f_\sharp$  because $f$ maps every $f^{-1}{\mathcal U}$-small loop in $X$ to a homotopically trivial loop in $K$.
Theorem \ref{thm Spanier} then implies that $\Ker f_\sharp$ is a covering subgroup of $\pi_1(X,x_0)$.
\end{proof}

For a partial converse to the above result assume that $X$ has a numerable cover $\mathcal{U}=\{U\}$ 
with a subordinated locally finite partition of unity $\{\rho_U\}$, and let $|\mathcal{U}|$ denote the geometric 
realization of the nerve of $\mathcal{U}$. Then the formula $f(x):=\sum_{U\in\mathcal{U}} \rho_U(x)\cdot U$ defines a map  
$f\colon X\to |\mathcal{U}|$. It is well known that the choice of the partition of unity does not affect
the homotopy class of $f$, so the induced homomorphism $f_\sharp$ depends only on the cover $\mathcal{U}$.

\begin{lemma}
\label{lem ex seq}
Let $X$ be a path-connected space with a numerable cover $\mathcal{U}$ consisting of path-connected open sets. 
Then there is a short exact sequence 
$$1\longrightarrow \pi_1(X,x_0;2\mathcal{U})\longrightarrow\pi_1(X,x_0)\stackrel{f_\sharp}{\longrightarrow}
\pi_1(|\mathcal{U}|, f(x_0))\longrightarrow 1,$$
where $2\mathcal{U}$ is the cover of $X$ consisting of all unions of pairs of intersecting sets in $\mathcal{U}$.
\end{lemma}
\begin{proof}
By a suitable modification of the partition of unity we may assume without loss of generality that every $U\in\mathcal{U}$ contains some 
point $x_U\in U$ such that $\rho_U(x_U)=1$. 

For every intersecting pair of sets $U,V\in\mathcal{U}$ choose a path in $U\cup V$ between $x_U$ and $x_V$. 
These paths determine a map $g\colon |\mathcal{U}^{(1)}|\to X$. For every 1-simplex $\sigma$ 
in $|\mathcal{U}^{(1)}|$ the image $f(g(\sigma))$ is contained in the open star of $\sigma$, which implies that 
the map $f\circ g$ is contiguous to the inclusion $i\colon|\mathcal{U}^{(1)}|\hookrightarrow |\mathcal{U}|$
so the  following diagram homotopy commutes:
$$\xymatrix{
X \ar[rr]^f & & |\mathcal{U}| \\
& |\mathcal{U}^{(1)}| \ar[lu]^g \ar@{^(->}[ru]_i}
$$
By applying the fundamental group functor we obtain the diagram
$$\xymatrix{
\pi_1(X,x_0) \ar[rr]^{f_\sharp} & & \pi_1(|\mathcal{U}|, f(x_0)) \\
& \pi_1(|\mathcal{U}^{(1)}|,f(x_0)) \ar[lu]^{g_\sharp} \ar@{^(->}[ru]_{i_\sharp}}
$$
Since the homomorphism $i_\sharp$ is surjective, so must be $f_\sharp$. 

If $U,V\in\mathcal{U}$ intersect then every loop, whose image is contained in $U\cup V$ is mapped by $f$ 
to the star of the simplex in $|\mathcal{U}|$ spanned by the vertices
$U$ and $V$. It follows that $f_\sharp$ is trivial on $2\mathcal{U}$-small loops, therefore
$\pi_1(X,x_0;2\mathcal{U})\subseteq \Ker f_\sharp$.

For the converse we extend the above diagram to obtain the following one:
$$\xymatrix{
{\Ker i_\sharp} \ar[r] \ar@{-->}[d]_{g_\sharp} & {\pi_1(|\mathcal{U}^{(1)}|,u_0)} \ar[r]^{i_\sharp} \ar[d]_{g_\sharp} & {\pi_1(|\mathcal{U}|,u_0)} \ar@{-->}[d] &
{\!\!\!\!\!\!\!\!\!\!\!\!\!\!\!\!\!\!\!\!\!\!\!\!\!\!\!\!\!\!\!\!\!\!\cong \pi_1(X,x_0)/\Ker f_\sharp}\\
{\pi_1(X,x_0;2\mathcal{U})} \ar[r] & {\pi_1(X,x_0)} \ar[r] \ar[ru]^{f_\sharp}& {\pi_1(X,x_0)/\pi_1(X,x_0;2\mathcal{U})}
}$$
Since $\Ker i_\sharp$ is generated by loops given by the boundaries of 2-simplexes in the nerve of $\mathcal{U}$ we may use the same method as 
in the proof of Theorem 7.3(2) in \cite{Cannon-Conner} to show that every such 
loop is mapped by $g_\sharp$ to a sum of $2\mathcal{U}$-small loops, so $g_\sharp(\Ker i_\sharp)\subseteq\pi_1(X,x_0;2\mathcal{U})$. 
Thus we obtain the induced natural map $\pi_1(X,x_0)/\Ker f_\sharp\to \pi_1(X,x_0)/\pi_1(X,x_0;2\mathcal{U}),$ 
and so $\Ker f_\sharp\subseteq\pi_1(X,x_0;2\mathcal{U})$.
\end{proof}

\begin{lemma}
\label{lem covers}
For every cover $\mathcal{U}$ of a paracompact space $X$ there is a numerable cover $\mathcal{V}$ such that
its double $2\mathcal{V}$ is a refinement of $\mathcal{U}$. Moreover, if $X$ is locally path-connected, then 
$\mathcal{V}$ can be chosen so that its elements are path connected.
\end{lemma}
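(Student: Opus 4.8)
The plan is to obtain $\mathcal{V}$ as a star-refinement of $\mathcal{U}$ and then, in the locally path-connected case, to pass to path-components. First I would invoke the classical characterization of paracompactness (A.~H.~Stone): every open cover of a paracompact (Hausdorff) space admits an open \emph{star-refinement}, i.e.\ an open cover $\mathcal{V}$ such that for each $V\in\mathcal{V}$ the star $\mathrm{St}(V,\mathcal{V}):=\bigcup\{V'\in\mathcal{V}\mid V'\cap V\neq\emptyset\}$ is contained in some member of $\mathcal{U}$. Moreover, every open cover of a paracompact space is numerable (it admits a subordinated locally finite partition of unity), so the refinement $\mathcal{V}$ we select is automatically numerable.

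Next I would check that this $\mathcal{V}$ already satisfies the doubling condition. If $V,V'\in\mathcal{V}$ intersect, then $V\cup V'\subseteq\mathrm{St}(V,\mathcal{V})\subseteq U$ for some $U\in\mathcal{U}$; hence every member of $2\mathcal{V}$ lies in an element of $\mathcal{U}$, that is, $2\mathcal{V}$ refines $\mathcal{U}$. (In fact a barycentric refinement would already suffice: choosing $x\in V\cap V'$ gives $V\cup V'\subseteq\mathrm{St}(x,\mathcal{V})$, which is contained in some $U$.) This settles the first assertion.

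For the second assertion I would assume $X$ is locally path-connected, so that the path-components of every open set are open. Then the collection $\mathcal{V}'$ of all path-components of members of $\mathcal{V}$ is again an open cover of $X$, and its elements are path-connected by construction; being an open cover of a paracompact space, $\mathcal{V}'$ is numerable. The doubling condition is inherited with no extra work: if $W,W'\in\mathcal{V}'$ are path-components of $V,V'\in\mathcal{V}$ with $W\cap W'\neq\emptyset$, then $V\cap V'\neq\emptyset$, whence $W\cup W'\subseteq V\cup V'\subseteq U$ for a suitable $U\in\mathcal{U}$, so $2\mathcal{V}'$ refines $\mathcal{U}$.

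The step I expect to require the most care---and thus the main obstacle---is the simultaneous preservation of numerability and the doubling-refinement property when passing to path-components. Openness of path-components, which is exactly where local path-connectedness enters, is what guarantees that $\mathcal{V}'$ is an open cover and hence numerable, while the elementary observation that $W\cup W'$ only shrinks the already-controlled union $V\cup V'$ is what keeps the doubling condition intact. If one prefers not to invoke numerability of $\mathcal{V}'$ abstractly, a subordinated partition of unity can be produced by hand: split each function $\rho_V$ of a partition of unity for $\mathcal{V}$ across the path-components of $V$, which are disjoint and clopen in $V$; each resulting function remains continuous precisely because $\mathrm{supp}\,\rho_V\subseteq V$.
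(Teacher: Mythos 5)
Your proof is correct, but it reaches the key ``doubling'' property by a different route than the paper. You invoke A.~H.~Stone's theorem that every open cover of a paracompact (Hausdorff) space admits an open star-refinement (indeed, as you note, a barycentric refinement already suffices, since $x\in V\cap V'$ gives $V\cup V'\subseteq\mathrm{St}(x,\mathcal{V})$), together with the standard fact that every open cover of a paracompact space is numerable. The paper instead constructs the refinement explicitly: it takes a locally finite refinement $\mathcal{U}'$ with a subordinated partition of unity, forms the canonical map $f\colon X\to|\mathcal{U}'|$ to the nerve, and pulls back the open stars of the vertices of the \emph{barycentric subdivision} of the nerve; two such preimages can intersect only if both lie in a single element of $\mathcal{U}'$, which is exactly the doubling property. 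These are two implementations of the same combinatorial idea (the paper's construction is essentially a hands-on proof of the barycentric-refinement property), so the difference is one of packaging: your version is shorter and leans on classical general topology, while the paper's version is self-contained relative to the nerve machinery it has already set up for Lemma \ref{lem ex seq} and keeps the auxiliary map $f\colon X\to|\mathcal{U}'|$ visible, which fits the way the lemma is used in Theorem \ref{thmshapegroup}. Your treatment of the locally path-connected case (pass to path-components, which are open, note that the union $W\cup W'$ only shrinks $V\cup V'$) is exactly the paper's; your optional ``by hand'' splitting of the partition of unity across path-components is also sound, since $\mathrm{supp}\,\rho_V\subseteq V$ guarantees continuity of each piece, though local finiteness of the split family deserves the extra remark that near a point one can shrink to a neighbourhood avoiding the supports of the finitely many relevant $\rho_V$ with $V\not\ni x$ and lying inside the single component of each $V\ni x$.
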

\begin{proof}
Let $\mathcal{U}'$ be a locally finite refinement of $\mathcal{U}$ with a subordinated partition
of unity. Then we have a map $f\colon X\to |\mathcal{U}'|$ defined as before. Let $\mathcal{V}$ be the cover of $X$ 
obtained by taking preimages of open stars of vertexes in the barycentric subdivision of the nerve of $\mathcal{U}'$.
Clearly, two elements of $\mathcal{V}$ can have a non-empty intersection only if they are both contained in some
element of $\mathcal{U}'$, which means that $2\mathcal{V}$ refines $\mathcal{U}'$, and hence $\mathcal{U}$. 

If $X$ is locally path-connected, then we can further refine $\mathcal{V}$ by taking the cover formed
by the path-components of elements of $\mathcal{V}$.
\end{proof}

Assume that a pointed space $(X,x_0)$ can be represented as a limit of an inverse system of pointed polyhedra 
$(X,x_0)=\varprojlim \big((K_i,k_i),p_{ij},\mathcal{I}\big)$ (or more generally, that $X$ has
a polyhedral resolution in the sense of \cite{Mardesic-Segal}). Then the homomorphisms $(p_i)_\sharp\colon \pi_1(X,x_0)\to \pi_1(K_i,k_i)$ 
induce a homomorphism $\partial\colon\pi_1(X,x_0)\to \check\pi_1(X,x_0)$, where 
$\check\pi_1(X,x_0)$ is the so called \emph{first shape group} of $X$, and is defined as the limit of the inverse system of 
fundamental groups $\big(\pi_1(K_i,k_i),(p_{ij})_\sharp,\mathcal{I}\big)$. Although the definitions are based on a specific resolution of $X$, it is a standard fact
(see \cite{Mardesic-Segal}) that both the first shape group of $X$ and the homomorphism $\partial\colon\pi_1(X,x_0)\to \check\pi_1(X,x_0)$ are independent of the chosen 
resolution for $X$.

We are now ready to prove the main theorem of this section which relates various subgroups of the fundamental group.

\begin{theorem}
\label{thmshapegroup}
Let $X$ be a connected, locally path-connected and paracompact space.
Then the the following subgroups of $\pi_1(X,x_0)$ coincide:
\begin{enumerate}
\item[(1)] Intersection of all groups of $\mathcal{U}$-small loops $\pi_1(X,x_0;\mathcal{U})$ 
for $\mathcal{U}$ a cover of $X$.
\item[(2)] Intersection of all covering subgroups of $X$.
\item[(3)] Intersection of all kernels $\Ker f_\sharp$ for maps $f$ from $X$ to a polyhedron.
\item[(4)] The \emph{shape kernel} of $X$, defined as ${\rm ShKer(X)}:=\Ker(\partial\colon\pi_1(X)\to\check{\pi}_1(X))$.
\end{enumerate}
\end{theorem}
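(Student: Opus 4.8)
The plan is to prove that the four subgroups coincide by closing the cycle of inclusions $(1)\subseteq(2)\subseteq(3)\subseteq(4)\subseteq(1)$. Write $G_1,G_2,G_3,G_4$ for the four subgroups listed, in order. The inclusion $G_1\subseteq G_2$ is immediate from Theorem \ref{thm Spanier}: every covering subgroup $G$ contains some $\pi_1(X,x_0;\mathcal U)$ and hence contains $G_1$, so intersecting over all covering subgroups gives $G_1\subseteq G_2$. The inclusion $G_2\subseteq G_3$ follows from Corollary \ref{cor kernels}: for each map $f$ from $X$ to a polyhedron the kernel $\Ker f_\sharp$ is a covering subgroup, so $G_2$, being the intersection of all covering subgroups, is contained in every such kernel and therefore in their intersection $G_3$.

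For $G_3\subseteq G_4$ I would first unwind the definition of the shape kernel: an element $\alpha$ lies in $G_4$ precisely when $\partial(\alpha)=1$ in $\check\pi_1(X,x_0)=\varprojlim\pi_1(K_i,k_i)$, which happens exactly when $(p_i)_\sharp(\alpha)=1$ for every projection $p_i\colon X\to K_i$ of the chosen polyhedral resolution. Thus $G_4=\bigcap_i\Ker (p_i)_\sharp$. Since each $K_i$ is a polyhedron, the maps $p_i$ are among those appearing in the definition of $G_3$, whence $G_3\subseteq\bigcap_i\Ker (p_i)_\sharp=G_4$.

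The substantive step is closing the loop with $G_4\subseteq G_1$, where both the covering-theoretic lemmas and shape theory enter. Fix an arbitrary cover $\mathcal U$ of $X$. Using that $X$ is paracompact and locally path-connected, Lemma \ref{lem covers} produces a numerable cover $\mathcal V$ by path-connected open sets whose double $2\mathcal V$ refines $\mathcal U$; let $f\colon X\to|\mathcal V|$ be the associated canonical map into the nerve. Lemma \ref{lem ex seq} identifies $\Ker f_\sharp=\pi_1(X,x_0;2\mathcal V)$, and since $2\mathcal V$ refines $\mathcal U$ the monotonicity of $\mathcal U$-small loops gives $\pi_1(X,x_0;2\mathcal V)\subseteq\pi_1(X,x_0;\mathcal U)$. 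It therefore suffices to show $G_4\subseteq\Ker f_\sharp$.

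This last containment is where the resolution is used essentially. Because $|\mathcal V|$ is a polyhedron and $X=\varprojlim K_i$ is a polyhedral resolution, the factorization property of resolutions \cite{Mardesic-Segal} furnishes an index $i$ and a map $g\colon K_i\to|\mathcal V|$ with $f\simeq g\circ p_i$; hence $f_\sharp=g_\sharp\circ (p_i)_\sharp$ and so $\Ker(p_i)_\sharp\subseteq\Ker f_\sharp$. Consequently $G_4=\bigcap_j\Ker(p_j)_\sharp\subseteq\Ker(p_i)_\sharp\subseteq\Ker f_\sharp\subseteq\pi_1(X,x_0;\mathcal U)$, and as $\mathcal U$ was arbitrary, intersecting over all covers yields $G_4\subseteq G_1$ and completes the cycle. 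I expect the main obstacle to be precisely this homotopy-factorization of an arbitrary map into a polyhedron through a term of the resolution; everything else is bookkeeping of inclusions assembled from the three preceding lemmas and the corollary. One should also keep careful track of base-point conventions throughout, since both the identification $\Ker f_\sharp=\pi_1(X,x_0;2\mathcal V)$ and the resolution factorization are statements about based objects, and an unbased homotopy would have to be corrected by a connecting path before applying $\pi_1$.
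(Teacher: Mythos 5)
Your proof is correct, and it uses the same four ingredients as the paper's proof (Theorem \ref{thm Spanier}, Corollary \ref{cor kernels}, Lemma \ref{lem ex seq}, Lemma \ref{lem covers}), but the decomposition is genuinely different. The paper closes the purely covering-theoretic cycle $(1)\subseteq(2)\subseteq(3)\subseteq(1)$ — the last inclusion via Lemmas \ref{lem ex seq} and \ref{lem covers}, exactly as in your $(4)\subseteq(1)$ step but with $(3)$ in place of $(4)$, so no shape theory enters there — and then attaches $(4)$ in one stroke by citing the equality $(3)=(4)$ as the standard description of the shape kernel. (As printed, the paper's proof of $(3)\subseteq(1)$ contains a typo: ``the latter is contained in (4)'' should read ``in (1)''.) You instead run the single cycle $(1)\subseteq(2)\subseteq(3)\subseteq(4)\subseteq(1)$, which forces you to unpack that standard description into its two halves: the easy half ($G_4=\bigcap_i\Ker (p_i)_\sharp$ and each $p_i$ is itself a map to a polyhedron) gives $(3)\subseteq(4)$, while the substantive half — the factorization property of polyhedral resolutions, $f\simeq g\circ p_i$ for an arbitrary map $f$ from $X$ to a polyhedron — is what lets you push $G_4$ into $\Ker f_\sharp$ before applying the two lemmas. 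The trade-off is that your version makes explicit the shape-theoretic input that the paper leaves implicit (and your remark about correcting an unbased homotopy by a connecting path is precisely the point that must be checked when invoking that factorization), whereas the paper's arrangement keeps the equality $(1)=(2)=(3)$ self-contained in covering-space terms and quarantines all of shape theory in a single citation; the two arguments are otherwise the same mathematics.
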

\begin{proof}
The inclusion (1)$\subseteq$(2) follows from Theorem \ref{thm Spanier} because every covering subgroup contains some 
subgroup of $\mathcal{U}$-small loops. Corollary \ref{cor kernels} implies that (2)$\subseteq$(3).

To prove the inclusion (3)$\subseteq$(1) observe that by Lemma \ref{lem ex seq} (3) is contained in the intersection 
of all groups of the  form $\pi_1(X,x_0;2\mathcal{U})$, while by Lemma \ref{lem covers} the latter is contained 
in (4).

Finally  the equality (3)=(4) amounts to the standard description of the shape kernel.
\end{proof}

As a consequence we obtain the following description of the fundamental group of the universal lifting space:

\begin{theorem}
If $X$ is a connected, locally path-connected and paracompact then 
$\pi_1(\widetilde X)$ coincides with the shape kernel of $X$.
\end{theorem}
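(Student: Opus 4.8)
The plan is to reduce the statement to the descriptions of subgroups supplied by the preceding theorems and then to isolate the single genuinely geometric ingredient. By Corollary \ref{coruniv} the space $\widetilde X$ is the categorical product (in $\mathsf{Lift}_X$) of a set of representatives of all path-connected open lifting spaces over $X$, and by Proposition \ref{propfunprod} its fundamental group, viewed via the injective homomorphism $p_\sharp$ as a subgroup of $\pi_1(X,x_0)$, is the intersection
$$\pi_1(\widetilde X)=\bigcap_{L}\im (p_L)_\sharp$$
taken over all objects $L$ of $\mathsf{Lift}_X$. Thus everything comes down to identifying this intersection with $\mathrm{ShKer}(X)$.

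First I would dispose of the inclusion $\pi_1(\widetilde X)\subseteq\mathrm{ShKer}(X)$. Every covering space of $X$ is in particular a path-connected open lifting space, so the intersection above is contained in the intersection taken only over coverings; by the equality of items (2) and (4) in Theorem \ref{thmshapegroup} the latter is exactly the shape kernel of $X$. Hence $\pi_1(\widetilde X)\subseteq\mathrm{ShKer}(X)$ with no extra work. The substance lies in the reverse inclusion, for which it suffices to show that every lifting subgroup contains the shape kernel, i.e. that $\mathrm{ShKer}(X)\subseteq\im (p_L)_\sharp$ for each object $L$. Using the equality of items (1) and (4) in Theorem \ref{thmshapegroup} I would rewrite $\mathrm{ShKer}(X)=\bigcap_{\mathcal V}\pi_1(X,x_0;\mathcal V)$, so it is enough to produce, for a fixed lifting space $p\colon L\to X$, a single open cover $\mathcal U$ with $\pi_1(X,x_0;\mathcal U)\subseteq\im p_\sharp$. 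This is precisely the lifting-space analogue of the forward implication of Theorem \ref{thm Spanier} (Spanier's Lemma 2.5.11): one wants $\mathcal U$ so that every $\mathcal U$-small loop lifts to a loop. Concretely, a generator $\gamma\cdot\alpha\cdot\overline\gamma$ lifts to a loop at $\tilde x_0$ exactly when $\alpha$ lifts to a loop at $\langle\gamma,\tilde x_0\rangle(1)$, and since $L$ is path-connected this point ranges over the entire fibre $p^{-1}(\alpha(0))$ as $\gamma$ varies; so what must be arranged is that the monodromy of $\pi_1(U)$ on every fibre over $U$ is trivial, for each $U\in\mathcal U$.

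The main obstacle is exactly this last point. For a covering one takes $\mathcal U$ to consist of evenly-covered path-connected sets, over which the lift of a loop stays in a single sheet and is therefore automatically a loop; for a general lifting space there are no evenly-covered neighbourhoods, the fibres being merely totally path-disconnected rather than discrete, so the covering-space argument does not transcribe. The route I would attempt is to work directly with the continuous lifting function $\Gamma$ (equivalently, with the homeomorphism $\overline p\colon L^I\to X^I\sqcap L$) together with the openness built into the definition of $\mathsf{Lift}_X$ and the local path-connectedness of $X$: continuity of $\Gamma$ forces the lift of a sufficiently small loop to be uniformly close to a constant path, and one then leverages openness of $p$ and total path-disconnectedness of the fibre to upgrade ``close to $\tilde x$'' to ``equal to $\tilde x$'' simultaneously at every point of the fibre. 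This is the delicate step, and it is exactly the content of the assertion that shape-trivial loops cannot be unravelled in any fibration with unique path lifting, the phenomenon flagged in the Introduction.

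A cleaner alternative, which I expect is what makes the corollary short, is to observe that Spanier's proof of Lemma 2.5.11 already operates at the level of fibrations with unique path lifting rather than of coverings, and therefore applies verbatim to $L$, producing the required cover $\mathcal U$ at once. Either way, once $\mathcal U$ is in hand one has
$$\mathrm{ShKer}(X)=\bigcap_{\mathcal V}\pi_1(X,x_0;\mathcal V)\subseteq\pi_1(X,x_0;\mathcal U)\subseteq\im p_\sharp$$
for every object $L$ of $\mathsf{Lift}_X$, whence $\mathrm{ShKer}(X)\subseteq\pi_1(\widetilde X)$. Combined with the first inclusion this yields the desired equality $\pi_1(\widetilde X)=\mathrm{ShKer}(X)$.
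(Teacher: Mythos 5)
Your first inclusion, $\pi_1(\widetilde X)\subseteq \mathrm{ShKer}(X)$, is exactly the paper's argument (Corollary \ref{coruniv}, Proposition \ref{propfunprod}, and the equality (2)=(4) of Theorem \ref{thmshapegroup}). The converse, however, is reduced to a false statement. You claim that for every object $p\colon L\to X$ of $\mathsf{Lift}_X$ there is a single cover $\mathcal U$ with $\pi_1(X,x_0;\mathcal U)\subseteq \im p_\sharp$ --- equivalently, by Theorem \ref{thm Spanier}, that every lifting subgroup is a covering subgroup. This erases precisely the distinction the paper is built around, and it fails already for the universal object itself. Take $X=H$, the Hawaiian earring, a Peano continuum and hence within the hypotheses of the theorem. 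If your claim held for $L=\widetilde H$, then combined with your own first inclusion it would give $\pi_1(H,x_0;\mathcal U)\subseteq\pi_1(\widetilde H)\subseteq\mathrm{ShKer}(H)$ for some cover $\mathcal U$. But by (3)=(4) of Theorem \ref{thmshapegroup} we have $\mathrm{ShKer}(H)\subseteq\Ker (u_N)_\sharp$ for every projection $u_N\colon H\to X_N$ onto the wedge of the first $N$ circles, while some member of $\mathcal U$ is a neighbourhood of the wild point and therefore contains an entire small circle $C_N$; the loop running once around $C_N$ is then $\mathcal U$-small, yet $(u_N)_\sharp$ sends it to a free generator of $\pi_1(X_N)$. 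So no Spanier group is contained in $\pi_1(\widetilde H)$, even though (as the theorem asserts) their intersection is. The underlying reason is that Spanier's Lemma 2.5.11 genuinely uses evenly covered neighbourhoods, which lifting spaces do not possess --- their fibres are only totally path-disconnected, not discrete --- so neither your ``verbatim'' route nor your lifting-function route can be completed: both aim at the same false per-cover statement. (The remark you cite from the Introduction asserts only that loops small with respect to \emph{all} covers lift to loops, not that all loops small with respect to \emph{some} cover do.)

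The paper's proof of the converse is element-wise and limiting, precisely so as to avoid this trap. Given a loop $\alpha$ representing an element of $\bigcap_{\mathcal U}\pi_1(X,x_0;\mathcal U)$, one approximates it by homotopically trivial loops $\alpha_i$ with $\alpha\simeq\alpha_i\ (\mathrm{mod}\ \mathcal U_i)$ for finer and finer covers $\mathcal U_i$. Every null-homotopic loop lifts to a loop in \emph{every} lifting space: lift the null-homotopy and observe that its restrictions to the sides and the top of the square are paths in fibres, hence constant by total path-disconnectedness. Thus each $\alpha_i$ lifts to a loop in $\widetilde X$, and the Hurewicz fibration property of $\widetilde X$ --- the continuity of its lifting function --- is then used to pass to the limit and conclude that $\alpha$ itself lifts to a loop, i.e.\ $\alpha\in\pi_1(\widetilde X)$. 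Repairing your proposal means replacing the search for a single cover $\mathcal U$ by this approximation argument; no inclusion of the form $\pi_1(X,x_0;\mathcal U)\subseteq\im p_\sharp$ is available for general lifting spaces.
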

\begin{proof}
By Corollary \ref{coruniv} and Proposition \ref{propfunprod}  the fundamental group of the universal lifting 
space is contained in the intersection of all covering subgroups of $\pi_1(X)$, which by Theorem \ref{thmshapegroup}
coincides with the shape kernel of $X$. For the converse, take a loop $\alpha$ representing an element of the intersection
$\bigcap\pi_1(X;\mathcal{U})$, and approximate $\alpha$ by a sequence of homotopically trivial loops 
$\alpha_i$, such that $\alpha\simeq\alpha_i(\mathrm{mod}\ \mathcal{U}_i)$. 
As each $\alpha_i$ lift to a loop in the universal space, we may apply the fibration property to show that 
$\alpha$ also lift to a loop, hence $\alpha\in\pi_1(\widetilde X)$. 
\end{proof}

%------------------------------------------------------------------------------------------------------------------------------------------------------------------------------------
%   INVERSE LIMITS OF COVERINGS
%------------------------------------------------------------------------------------------------------------------------------------------------------------------------------------

\ \\

\section{Inverse limits of coverings}

Inverse limits of coverings are in many aspects the most tractable class of lifting spaces (with the exception of coverings, of course).
To simplify the notation we agree that all spaces have base-points 
which are omitted from the notation whenever they are not explicitly used,  and all maps are base-point preserving.

Let $\mathcal{I}$ be a directed set and $\mathbf{X}=(X_i,u_{ij}\colon X_j\to X_i,i,j\in\mathcal{I})$ an $\mathcal{I}$-indexed 
inverse system of path-connected and semi-locally simply-connected spaces. For each $i\in \mathcal{I}$ let $q_i\colon\widetilde X_i\to X_i$ be
the universal cover of $X_i$. By the standard lifting criterion for maps between covering spaces (see \cite[Theorem 2.4.5]{Spanier}) there are unique
maps $\tilde u_{i,j}\colon \widetilde X_j\to \widetilde X_i$ such that the diagram 
$$\xymatrix{
\widetilde X_j \ar[r]^{\tilde u_{ij}} \ar[d]_{q_j}  & \widetilde X_i \ar[d]^{q_i}\\
X_j \ar[r]_{u_{ij}} & X_i
}$$
commutes. Furthermore, $\mathbf{\widetilde X}:=\big(\widetilde X_i,\tilde u_{i,j},\mathcal{I}\big)$ is an inverse system of spaces and 
$\mathbf{q}:=(q_i)\colon\mathbf{\widetilde X}\to\mathbf{X}$ is a level-preserving mapping of inverse systems.

More generally, let us for every $i\in\mathcal{I}$ choose a subgroup $G_i\le \pi_1(X_i)$ and consider maps $q_i\colon \widetilde X_i\to
\widetilde X_i/G_i$ and $p_i\colon \widetilde X_i/G_i\to X_i$, where  $p_i$ is the covering projection corresponding to the group $G_i$. 
If $(u_{ij})_\sharp(G_j)\subseteq G_i$, then again by the lifting theorem for covering spaces there exist unique 
maps $\bar u_{ij}\colon \widetilde X_j/G_j\to \widetilde X_i/G_i$ such that the following diagram  commutes 
$$\xymatrix{
\widetilde X_j \ar[r]^{\tilde u_{ij}} \ar[d]_{q_j}  & \widetilde X_i \ar[d]^{q_i}\\
\widetilde X_j/G_j \ar[r]^{\bar u_{ij}} \ar[d]_{p_j}  & \widetilde X_i/G_i \ar[d]^{p_i}\\
X_j \ar[r]_{u_{ij}} & X_i
}$$
We will say that the an $\mathcal{I}$-indexed family of groups $G_i\le \pi_1(X_i)$ form a \emph{coherent thread} with respect to the inverse system $\mathbf{X}$ if 
$(u_{ij})_\sharp(G_j)\subseteq G_i$ for all $i\le j$ or, in other words, if $\mathbf{G}=\big(G_i,(u_{ij})_\sharp,\mathcal{I}\big)$ is an inverse system of groups.

Clearly, every coherent thread $\mathbf{G}$ for $\mathbf{X}$ induces an inverse system of spaces 
$\mathbf{\widetilde X/G}:=(\widetilde X_i/G_i,\bar u_{ij},\mathcal{I})$. Moreover, the inverse systems $\mathbf{X}$, $\mathbf{\widetilde X}$ and 
$\mathbf{\widetilde X/G}$ are related by level preserving morphisms of inverse systems 
$\mathbf{p}:=(p_i)\colon \mathbf{\widetilde X/G}\to \mathbf{X}$ and $\mathbf{q}:=(q_i)\colon \mathbf{\widetilde X}\to \mathbf{X/G}$.
Observe that the systems $\mathbf{\widetilde X}$ and $\mathbf{X}$ may be viewed as special instances of $\mathbf{\widetilde X/G}$ 
with respect to coherent threads consisting of trivial groups or of groups $G_i=\pi_1(X_i)$ respectively.

Since the bonding maps are base-point preserving, the inverse limits $X:=\varprojlim\bf{X}$, $\widetilde X:=\varprojlim\bf{\widetilde X}$ 
and $\widetilde X_\mathbf{G}:=\varprojlim\bf{\widetilde X/G}$ are non-empty. We will denote by $u_i\colon X\to X_i$, $\tilde u_i\colon \widetilde X\to \widetilde X_i$ 
and $\bar u_i\colon \widetilde X_\mathbf{G}\to \widetilde X_i/G_i$ the projections from the limit spaces to the system.

\begin{proposition}
\label{prop open map}
The limit map $p_\mathbf{G}:=\varprojlim\mathbf{p}\colon \widetilde X_\mathbf{G}\to X$ is a lifting projection. 
Moreover, if $X$ is locally path-connected then $p_\mathbf{G}$ is an open map.
\end{proposition}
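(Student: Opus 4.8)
The plan is to prove the two claims in sequence. For the first claim, that $p_\mathbf{G}$ is a lifting projection, I would appeal directly to the inverse-limit stability established in Proposition~\ref{propliftings}(1). Each $p_i\colon \widetilde X_i/G_i\to X_i$ is a covering projection, hence a lifting projection; the family $\mathbf{p}=(p_i)$ is a level-preserving morphism between the inverse systems $\mathbf{\widetilde X/G}$ and $\mathbf{X}$; and taking limits of the spaces yields exactly $\widetilde X_\mathbf{G}=\varprojlim \mathbf{\widetilde X/G}$ and $X=\varprojlim\mathbf{X}$. Thus $p_\mathbf{G}=\varprojlim\mathbf{p}$ is an inverse limit of lifting projections along a morphism of systems, and the cited part of Proposition~\ref{propliftings} says precisely that such a limit is again a lifting projection. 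This step is essentially a citation and should be short.

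The substantive content is the openness claim. The natural strategy is to reduce openness of $p_\mathbf{G}$ to openness of the individual covering projections $p_i$, which are automatically open maps. I would first fix a point $\tilde x\in\widetilde X_\mathbf{G}$ and a basic open neighborhood of it; since $\widetilde X_\mathbf{G}$ carries the inverse-limit (subspace of product) topology, a basic neighborhood has the form $\bar u_i^{-1}(W_i)$ for some index $i$ and some open $W_i\subseteq \widetilde X_i/G_i$. I then want to show $p_\mathbf{G}\big(\bar u_i^{-1}(W_i)\big)$ is open in $X$. Because the diagram of limit projections commutes, $p_\mathbf{G}=p_i\circ\bar u_i$ after composing with $u_i\colon X\to X_i$, so the image in $X$ relates to the open set $p_i(W_i)\subseteq X_i$ via $u_i^{-1}\big(p_i(W_i)\big)$. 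The local path-connectedness of $X$ enters here: it lets me upgrade the containment $p_\mathbf{G}\big(\bar u_i^{-1}(W_i)\big)\subseteq u_i^{-1}\big(p_i(W_i)\big)$ to an actual equality on a path-component, using the unique path-lifting property to lift paths in $u_i^{-1}(p_i(W_i))$ back up through $p_\mathbf{G}$ and checking they stay inside $\bar u_i^{-1}(W_i)$.

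The main obstacle I anticipate is precisely this last point: a naive image computation only gives $p_\mathbf{G}\big(\bar u_i^{-1}(W_i)\big)\subseteq u_i^{-1}\big(p_i(W_i)\big)$, and the reverse containment can fail at a single index, because a point of $X$ mapping into $p_i(W_i)$ under $u_i$ need not lift into the chosen neighborhood $W_i$ at level $i$ when higher levels are taken into account. The resolution is to exploit local path-connectedness of $X$ to produce, for each point of $p_\mathbf{G}\big(\bar u_i^{-1}(W_i)\big)$, a path-connected open neighborhood $V\subseteq X$ that is carried into $p_i(W_i)$; since the restriction of a lifting projection to a path-connected open set admits continuous path-lifts by the unique path-lifting property, every point of $V$ lifts into $\bar u_i^{-1}(W_i)$, giving an open neighborhood of the image point inside the image. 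Covering $p_\mathbf{G}\big(\bar u_i^{-1}(W_i)\big)$ by such neighborhoods shows it is open. I would carry out the path-lifting verification carefully, since this is where local path-connectedness is genuinely used and where the argument could otherwise silently break.
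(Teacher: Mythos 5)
Your handling of the first claim coincides with the paper's (cite Proposition \ref{propliftings}), and your overall strategy for openness is also the paper's: reduce to basic open sets of the limit topology, use local path-connectedness of $X$ to produce a path-connected open $V$ around a point of the image, and lift paths from $V$. However, there is a genuine gap at exactly the step you flagged as delicate, and the mechanism you propose does not close it. For an \emph{arbitrary} open $W_i\subseteq \widetilde X_i/G_i$, the assertion that the lift $\widetilde\alpha$ of a path $\alpha$ in $V$ starting at $\tilde x\in\bar u_i^{-1}(W_i)$ ends in $\bar u_i^{-1}(W_i)$ is simply not a consequence of unique path lifting. By uniqueness, the $i$-th coordinate $\bar u_i\circ\widetilde\alpha$ is the unique lift through the covering $p_i$ of $u_i\circ\alpha$ starting at $\tilde x_i\in W_i$; even if $u_i\circ\alpha$ stays inside $p_i(W_i)$, this lift can exit $W_i$ into another sheet. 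Concretely, for the covering $p\colon\RR\to S^1$, $p(t)=e^{2\pi it}$, take $W=(0,5/8)\cup(3/4,9/8)$: then $p(W)$ is a path-connected open arc, but the lift starting at $1/2\in W$ of a path running in $p(W)$ from $p(1/2)$ backwards to $p(9/10)$ ends at $-1/10\notin W$. So ``every point of $V$ lifts into $\bar u_i^{-1}(W_i)$'' cannot be verified by lifting paths from $\tilde x$, and nothing else in your outline supplies the needed witness in the fibre over $y$.

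The missing idea --- and the paper's fix --- is to restrict the level-$i$ open sets before lifting: it suffices to consider $\widetilde U_i\subseteq\widetilde X_i/G_i$ that are mapped by $p_i$ \emph{homeomorphically} onto an elementary (evenly covered) open set $U_i=p_i(\widetilde U_i)\subseteq X_i$. Such sets form a basis of $\widetilde X_i/G_i$ because $p_i$ is a covering, so (using directedness of $\mathcal I$) the sets $\bar u_i^{-1}(\widetilde U_i)$ still form a basis of $\widetilde X_\mathbf{G}$ and it is enough to show their images are open. With this restriction your argument goes through verbatim: choose $V$ path-connected open with $x\in V\subseteq u_i^{-1}(U_i)$; for $y\in V$ and a path $\alpha$ in $V$ from $x$ to $y$, the composite $\left(p_i|_{\widetilde U_i}\right)^{-1}\circ u_i\circ\alpha$ is a lift of $u_i\circ\alpha$ starting at $\tilde x_i$, hence by uniqueness it equals $\bar u_i\circ\widetilde\alpha$, which therefore stays in $\widetilde U_i$; thus $\widetilde\alpha(1)\in\bar u_i^{-1}(\widetilde U_i)$ and $y\in p_\mathbf{G}\bigl(\bar u_i^{-1}(\widetilde U_i)\bigr)$, so $V$ lies in the image. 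This sheet-restriction is precisely what makes the unique-lifting argument bite, and it is the one ingredient absent from your proposal.
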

\begin{proof}
The first claim follows directly from the fact that inverse limits of lifting projections is a lifting projection, as proved in Proposition \ref{propliftings}. 

Toward the proof of the second claim observe that $X$ and $\widetilde X_\mathbf{G}$ may be viewed as subspaces of the topological products
$\prod_{i\in\mathcal I} X_i$ and $\prod_{i\in\mathcal I} X_i/G_i$, respectively. Therefore, it is sufficient to show that $p_\mathbf{G}(\widetilde U)$ is 
open in $X$ for every sub-basic  open set of the form $\widetilde U=X_\mathbf{G}\cap (\widetilde U_i\times \prod_{j\ne i} X_j/G_j)$, where $\widetilde U_i$ is
an open  subset of $X_i/G_i$ that is homeomorphically projected to an elementary open subset $U_i=p_i(\widetilde U_i)\subset X_i$. 
Let $x=(x_i)\in p_\mathbf{G}(\widetilde U)$ be the projection of the point  $\tilde x=(\tilde x_i)\in \widetilde U$. 
Since $X$ is locally path-connected, there exists a path-connected open set $V\subset X$ such that $x\in V\subset X\cap (U_i\times \prod_{j\ne i} X_j)$. 
For every $y\in V$ we can find a path $\alpha\colon (I,0,1)\to (V,x,y)$. Then there is a unique lifting 
$\widetilde\alpha\colon (I,0)\to (\widetilde U,\tilde x)$ of $\alpha$ along the lifting projection $p$. As $p_i\colon\widetilde U_i\approx U_i$
the construction of the lifting function implies that $\widetilde \alpha(1)_i\in \widetilde U_i$, therefore $y=p(\widetilde\alpha(1))\in p(\widetilde U)$.
We may therefore conclude that $V\subset p(\widetilde U)$, and consequently, that $p(\widetilde U)$ is open in $X$.  
\end{proof}

The same argument can be used to prove a more general statement that if $\mathbf{G}$ and $\mathbf{G'}$ are coherent threads such that $G_i\le G'_i\le \pi_1(X_i)$
for every  $i\in\mathcal{I}$, then we obtain a lifting projection $\widetilde X_\mathbf{G}\to \widetilde X_\mathbf{G'}$, which is an open map, whenever
$\widetilde X_\mathbf{G'}$ is locally path-connected.

In the following proposition we give an explicit description of the fibre of $p_\mathbf{G}$ in terms of the fundamental groups of the spaces in the inverse system $\mathbf{X}$
and the coherent thread $\mathbf{G}$:

\begin{proposition}
\label{prop fibre limit}
The fibre of $p_\mathbf{G}$ is naturally homeomorphic to the inverse limit of the system of cosets $\big(\pi_1(X_i)/G_i,(u_{ij})_\sharp,\mathcal{I}\big)$.
\end{proposition}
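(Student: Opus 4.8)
The plan is to realise the fibre homeomorphism level-wise and then pass to the inverse limit. The basic ingredient is the classical identification of the fibre of a single covering with a coset space. By \cite[Theorem 2.3.9]{Spanier}, for each $i\in\mathcal{I}$ the fibre $p_i^{-1}(x_i)$ is in natural bijection with the coset space $\pi_1(X_i)/G_i$; explicitly, the coset of a loop $\alpha$ at $x_i$ corresponds to the endpoint $\langle\alpha,\tilde x_i\rangle(1)$ of the unique lift of $\alpha$ starting at the base-point $\tilde x_i\in\widetilde X_i/G_i$ of the covering. Well-definedness and bijectivity are exactly the statement that $(p_i)_\sharp\pi_1(\widetilde X_i/G_i)=G_i$.

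First I would check that these coordinate bijections assemble into an isomorphism of inverse systems of sets. The map on coset spaces $\pi_1(X_j)/G_j\to\pi_1(X_i)/G_i$ induced by $(u_{ij})_\sharp$ is well-defined precisely because $\mathbf{G}$ is a coherent thread, i.e. $(u_{ij})_\sharp(G_j)\subseteq G_i$. The square relating it to the restriction of $\bar u_{ij}$ to the fibres commutes by uniqueness of path lifting: since $\bar u_{ij}$ is base-point preserving and satisfies $p_i\circ\bar u_{ij}=u_{ij}\circ p_j$, the path $\bar u_{ij}\circ\langle\alpha,\tilde x_j\rangle$ is a lift of $u_{ij}\circ\alpha$ starting at $\tilde x_i$, hence equals $\langle u_{ij}\circ\alpha,\tilde x_i\rangle$, so endpoints are carried to endpoints.

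Next I would pass to limits. By construction $\widetilde X_\mathbf{G}=\varprojlim\widetilde X_i/G_i$ and $X=\varprojlim X_i$ sit inside the products $\prod_i\widetilde X_i/G_i$ and $\prod_i X_i$, so the fibre over the base-point $x_0=(x_i)\in X$ consists precisely of the threads $(\tilde x_i)$ with $p_i(\tilde x_i)=x_i$; that is, $p_\mathbf{G}^{-1}(x_0)=\varprojlim p_i^{-1}(x_i)$ as a subspace of $\prod_i\widetilde X_i/G_i$. Applying the limit functor to the level-wise bijections of the previous step yields a natural bijection $\varprojlim\big(\pi_1(X_i)/G_i\big)\to p_\mathbf{G}^{-1}(x_0)$.

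It remains to upgrade this bijection to a homeomorphism, and here the topology is essentially automatic. Covering projections have discrete fibres, so each $p_i^{-1}(x_i)$ is discrete and, endowing each $\pi_1(X_i)/G_i$ with the discrete topology, every coordinate bijection is a homeomorphism. The induced bijection of products is then a homeomorphism, and it restricts to one between the two inverse-limit subspaces carrying the respective subspace topologies. The only point demanding care is the bookkeeping in the second step: since the $G_i$ need not be normal, one works throughout with coset spaces and their induced maps rather than quotient groups. The topological identification, by contrast, follows at once from the observation that both sides inherit the subspace topology from the same ambient product $\prod_i\widetilde X_i/G_i$.
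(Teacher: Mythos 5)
Your proof is correct and follows essentially the same route as the paper: identify the fibre of $p_\mathbf{G}$ with $\varprojlim p_i^{-1}(x_i)$, use the classical endpoint-of-lift bijections $\pi_1(X_i)/G_i\to p_i^{-1}(x_i)$, verify compatibility with the bonding maps via uniqueness of lifts, and invoke discreteness of covering fibres to upgrade the limit bijection to a homeomorphism. Your write-up is in fact slightly more explicit than the paper's (the commutativity of the squares and the subspace-of-product bookkeeping are spelled out rather than asserted), but the argument is the same.
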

\begin{proof}
For each $i\in \mathcal{I}$ let $x_i$ and $\bar x_i$ denote respectively the base-points of $X_i$ and $\widetilde X_i/G_i$, and let $x=(x_i)$ be the base-point of $X$. 
The corresponding fibres over $x_i$ and $x$ are  $F_i:=p_i^{-1}(x_i)\subset \widetilde X_i/G_i$ and $F:=p_\mathbf{G}^{-1}(x_i)$.
If $i\le j$ then the restriction of $\bar u_{ij}$ maps $F_j$ to $F_i$ so we obtain the inverse system $\big(F_i,\bar u_{ij},\mathcal{I}\big)$ whose limit
is precisely $F$, and the projection maps  $F\to F_i$ may be identified with the restrictions of the projections $\bar u_i\colon\widetilde X_\mathbf{G}\to \widetilde X_i/G_i$.

It is well-known that the function $\partial\colon\pi_1(X_i)\to F_i$, that to every loop $\alpha\in\pi_1(X_i)$ assigns the end point of the lifting 
of $\alpha$ to $\widetilde X_i$, $\partial(\alpha):=\langle\widetilde \alpha,\bar x_i\rangle(1)\in F_i$, induces a bijection $l_i\colon \pi_1(X_i)/G_i\to F_i$. 
The bijections $l_i$ are compatible with the bonding homomorphisms in the inverse system, as for each $i\le j$ we have a commutative diagram
$$\xymatrix{
\pi_1(X_j)/G_j \ar[r]^{(\bar u_{ij})_\sharp}\ar[d]_{l_j} &\pi_1(X_i)/G_i\ar[d]^{l_i}\\
F_j \ar[r]_{\bar u_{ij}}  & F_i }
$$
Observe that the bijections $l_i$ are actually homeomorphisms, as the fibres of covering spaces are discrete topological spaces. We conclude that the morphisms
of inverse systems $(l_i)$ induces a natural homeomorphism between $F=\varprojlim F_i$ and $\varprojlim \pi_1(X_i)/G_i$.
\end{proof}

We may be lead to expect that $p_\mathbf{G}$ is never a covering projection but that is not the case. 
Indeed, to determine if $p_\mathbf{G}$ is a covering projection it is sufficient to consider the topology on its fibre.
The product topology on the limit of an inverse system of discrete spaces is not discrete, unless almost all bonding maps in the system are injective.
Thus we have the following corollary (where we assume, for simplicity, that $\mathcal{I}=\mathbb{N}$, so that the inverse system is in fact an inverse sequence).

\begin{corollary}
$p_\mathbf{G}$ is a covering projection if and only if the connecting morphisms in the inverse system $\big(\pi_1(X_i)/G_i,(u_{ij})_\sharp,\mathcal{I}\big)$
are eventually injective.
\end{corollary}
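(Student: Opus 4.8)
The plan is to reduce the statement to the topology of the fibre of $p_\mathbf{G}$ and then to a purely combinatorial analysis of the tower of cosets. By Proposition \ref{prop fibre limit} the fibre $F$ of $p_\mathbf{G}$ is naturally homeomorphic to $\varprojlim_i\big(\pi_1(X_i)/G_i\big)$, an inverse limit over $\mathbb{N}$ of discrete spaces carrying the subspace topology inherited from the product $\prod_i \pi_1(X_i)/G_i$. Since a covering projection necessarily has discrete fibres, the whole question hinges on deciding exactly when this inverse limit is discrete and on translating discreteness back into a statement about $p_\mathbf{G}$. Write $C_i:=\pi_1(X_i)/G_i$ throughout.

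For the \emph{if} direction I would argue directly, avoiding any general ``discrete fibre implies covering'' theorem. Assuming the bonding maps $(u_{i,i+1})_\sharp\colon C_{i+1}\to C_i$ are injective for all $i\ge N$, the projection $F\to C_N$ is injective, because two threads agreeing in coordinate $N$ must agree in every coordinate $\ge N$ by injectivity, hence everywhere. I would then realize $p_\mathbf{G}$ as a sub-covering of the pullback of the covering $p_N\colon \widetilde X_N/G_N\to X_N$ along the projection $u_N\colon X\to X_N$: the induced map on fibres is the injection $F\hookrightarrow C_N$, and since pullbacks of coverings are coverings this exhibits $p_\mathbf{G}$ as a covering projection. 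The only point needing care is to check that the natural comparison map $\widetilde X_\mathbf{G}\to u_N^*(\widetilde X_N/G_N)$ is a homeomorphism onto the union of sheets indexed by the cosets in the image of $F$.

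For the \emph{only if} direction I would first note that a covering projection has discrete fibre, so $F=\varprojlim_i C_i$ is discrete, and then extract eventual injectivity of the bonds; this is the step I expect to be the main obstacle. Discreteness of a countable inverse limit of discrete spaces does not by itself force the individual bonding maps to be injective, since a tower can collapse onto smaller stable images while remaining injective only ``in the limit'', so the argument must genuinely exploit the inverse-sequence structure $\mathcal{I}=\mathbb{N}$. I would handle this by passing to the tower of essential (stable) images $C_i':=\bigcap_{j\ge i}\im\,(u_{ij})_\sharp$, over which $F$ is still the limit, $F=\varprojlim_i C_i'$, and then showing that a thread fails to be isolated whenever infinitely many of the restricted bonds $C_{i+1}'\to C_i'$ are non-injective: a genuine branching at some level $i$ can be continued, using stability of the images, to a second thread that agrees with the given one up to level $i$, contradicting isolatedness. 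Thus discreteness forces the stable bonds to be eventually injective; when the bonding maps are surjective (as is typical for the systems arising from resolutions) the stable images coincide with the full $C_i$, recovering the stated form. Combining the two directions through Proposition \ref{prop fibre limit} then closes the proof.
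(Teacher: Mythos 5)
Your proposal follows the same two-pronged route as the paper: for sufficiency, exhibit $p_\mathbf{G}$ as (a piece of) the pullback of the covering $p_N\colon \widetilde X_N/G_N\to X_N$; for necessity, use Proposition \ref{prop fibre limit} to reduce everything to discreteness of the fibre $F=\varprojlim \pi_1(X_i)/G_i$. The difference is that the paper disposes of necessity in one sentence (``if infinitely many bonding maps are non-injective then the limit space is not discrete''), whereas you flag exactly this implication as the main obstacle. Your suspicion is correct, and in fact the obstacle is fatal to the statement as it stands, not merely to the proof. Take $X_i=S^1$ with every bonding map $z\mapsto z^2$, so that $X=\varprojlim X_i$ is the dyadic solenoid $\mathbb{S}_2$, and let $G_i=2\ZZ\le\ZZ=\pi_1(X_i)$. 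The coset system is $\ZZ/2\leftarrow\ZZ/2\leftarrow\cdots$ with all bonds equal to the zero map, hence never injective; yet its limit is a single point, and $p_\mathbf{G}\colon\mathbb{S}_2\to\mathbb{S}_2$ is the shift homeomorphism of the solenoid, certainly a covering projection. This is precisely the ``collapse onto stable images'' phenomenon you describe. Dually, sufficiency also fails without extra hypotheses: with the same system and $G_i=1$, all coset bonds are multiplication by $2$ on $\ZZ$, hence injective, but $p_\mathbf{G}$ is the dense injection $\RR\to\mathbb{S}_2$ onto the path component of the base point, which is not a covering (no open subset of $\mathbb{S}_2$ is evenly covered by intervals). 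So the check you defer in your pullback step --- that the comparison map $\widetilde X_\mathbf{G}\to u_N^*(\widetilde X_N/G_N)$ is a homeomorphism onto a union of sheets --- is not routine: it is false in this example, because the image of $F\hookrightarrow C_N$ is not clopen-compatible unless every element of $C_N$ can be prolonged to a thread; the same prolongation problem silently undermines the paper's own one-line sufficiency proof.

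Your proposed repair of necessity via stable images suffers from the same lacuna. To continue a branching of $C'_{i+1}\to C'_i$ into a second full thread you must lift points of $C'_{i+1}$ compatibly through all later bonds; stability gives preimages from each finite stage but not compatible ones, i.e.\ the restricted bonds $C'_{j+1}\to C'_j$ need not be surjective --- this is exactly the Mittag-Leffler issue that the paper itself isolates later (compare Lemma \ref{lem dense image} and Corollary \ref{cor stab surjective}), and for towers of infinite coset spaces it can fail. Worse, for general inverse sequences of sets the implication ``discrete limit with surjective bonds forces eventually injective bonds'' is false outright: one can build a forest of combs, one branching per level, all of whose branches are isolated. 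What rescues the coset setting is homogeneity --- all nonempty fibres of a bond $\pi_1(X_{j+1})/G_{j+1}\to\pi_1(X_j)/G_j$ are translates of one another --- combined with surjectivity, so that a branching at any level beyond $N$ can be performed along a fixed thread and then extended. Thus the honest conclusion, which your last sentence in effect concedes, is that the corollary is true when the maps $(u_{ij})_\sharp$ (or at least the induced coset bonds) are surjective --- automatic, e.g., for expansions of Peano continua by Corollary \ref{cor stab surjective} --- and false in general; neither the paper's two-line proof nor your stable-image repair, as written, closes this gap.
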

\begin{proof}
If there exists $N$ such that $(u_{i\,i-1})_\sharp$ are injective 
for $i>N$ then $p_\mathbf{G}$ is the pullback of the covering projection $p_N\colon \widetilde X_N/G_N\to X_N$  and so it is itself a covering projection.
Conversely, if infinitely many bonding maps in the sequence are non-injective then the limit space is not discrete, hence $p_\mathbf{G}$ is a lifting projection 
but not a covering.
\end{proof} 

Observe that Examples \ref{ex infinite product} (infinite product of coverings) and \ref{ex dyadic solenoid} (dyadic solenoid) are inverse limits of coverings
whose fibres are not discrete, so they are not covering spaces over the circle.
On the other hand, the fibre of the hat construction described in Example \ref{ex hat} is discrete but the total space is usually disconnected, so it is not a covering
space in the usual sense, but rather a disjoint union of coverings. Here is another interesting lifting space:

\begin{example}
\label{ex Warsawonoid}
Let $W$ be the Warsaw circle, and let $(W_i,u_{ij},\NN)$ be the usual sequence of approximations of $W$ by topological annuli. 
Then $\pi_1(W_i)\cong\ZZ$ and $(u_{ij})_\sharp$ are isomorphisms. By choosing a coherent thread $\mathbf{G}$ with 
$G_i:=2^i\ZZ$ we obtain an inverse sequence which is at group level analogous to that of example  \ref{ex dyadic solenoid}. 
The limit $p_\mathbb{G}\colon \widetilde W_\mathbb{G}\to W$ is an interesting lifting space that resembles a dyadic solenoid over $W$, so we
may call it a \emph{dyadic Warsawonoid}.
\end{example}

\subsection{Homotopy exact sequence}

Next we consider the long homotopy exact sequence of the lifting space $p_\mathbf{G}$. As the fibres of a lifting space are totally path-disconnected
(cf.  \cite[Theorem 2.2.5]{Spanier}), we conclude that $\pi_n(\widetilde X_\mathbf{G})\cong \pi_n(X)$ for  $n\ge 2$, and that $\pi_0(F)$ may be identified with $F$.
Furthermore, we have the following exact sequence of groups and pointed sets
$$\xymatrix{
1 \ar[r] &  \pi_1(\widetilde X_\mathbf{G}) \ar[r]^{{p_\mathbf{G}}_\sharp} &  \pi_1(X) \ar[r]^{\partial} &
\pi_0(F) \ar[r] & \pi_0(\widetilde X_\mathbf{G}) \ar[r] &\pi_0(X) \ar[r] & \ast}$$
where the function $\partial$ is determined by the action of $\pi_1(X)$ on the fibre $F$.
We will normally assume that $X$ is path-connected in which case the above exact sequence ends at the term $\pi_0(\widetilde X_\mathbf{G})$.
The following theorem identifies $\pi_1(\widetilde X_\mathbf{G})$ and $\pi_0(\widetilde X_\mathbf{G})$. 
Observe that the fibre $F$ is a closed subspace of $\widetilde X_\mathbf{G}$, which by Proposition \ref{prop fibre limit} 
may be identified with the inverse limit of the system of cosets $F=\varprojlim \pi_1(X_i)/G_i$. 

\begin{theorem}
\label{thm ex seq}
Assume $X$ is path-connected. Then there is an exact sequence of groups and pointed sets
$$\xymatrix{
1 \ar[r] &  \pi_1(\widetilde X_\mathbf{G}) \ar[r]^{{p_\mathbf{G}}_\sharp} &  \pi_1(X) \ar[r]^-{\partial} &
\varprojlim \pi_1(X_i)/G_i \ar[r] & \pi_0(\widetilde X_\mathbf{G}) \ar[r] & \ast}$$
where the connecting function $\partial$ is given by the composition 
$$\xymatrix{
\pi_1(X) \ar[rr]^-{\varprojlim (u_i)_\sharp} & & \varprojlim \pi_1(X_i) \ar[rr] & & \varprojlim \pi_1(X_i)/G_i}
$$
Consequently, $p_\mathbf{G}$ induces an isomorphism
$$\pi_1(\widetilde X_\mathbf{G}) \cong \bigcap_{i\in\mathcal{I}} (u_i)_\sharp^{-1}(G_i).$$
Moreover, if  $G_i$ is a normal subgroup of $\pi_1(X_i)$ for each $i$, then $\varprojlim \pi_1(X_i)/G_i$ is a group, $\partial$ is a homomorphism and
$\pi_0(\widetilde X_\mathbf{G})$ may be identified with the set of cosets $$\bigl(\varprojlim \pi_1(X_i)/G_i\bigr)/\partial(\pi_1(X)).$$
\end{theorem}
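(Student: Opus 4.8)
The plan is to read off every assertion from the long exact homotopy sequence of the Hurewicz fibration $p_\mathbf{G}$. Since $p_\mathbf{G}$ is a lifting projection by Proposition~\ref{prop open map}, it is in particular a Hurewicz fibration, so its fibre $F=p_\mathbf{G}^{-1}(x_0)$ sits in the usual exact sequence of homotopy groups and pointed sets. First I would feed in the two simplifying hypotheses. By Proposition~\ref{propliftings}(3) the fibre of a lifting space admits only constant paths, so $\pi_1(F)=1$ and $\pi_0(F)$ is just the underlying set of $F$; and since $X$ is path-connected, $\pi_0(X)=\ast$, which truncates the tail. These two observations collapse the fibration sequence to exactly
$$1 \longrightarrow \pi_1(\widetilde X_\mathbf{G}) \stackrel{(p_\mathbf{G})_\sharp}{\longrightarrow} \pi_1(X) \stackrel{\partial}{\longrightarrow} F \longrightarrow \pi_0(\widetilde X_\mathbf{G}) \longrightarrow \ast,$$
and Proposition~\ref{prop fibre limit} identifies $F$ with $\varprojlim \pi_1(X_i)/G_i$.

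Next I would pin down the connecting map $\partial$. By definition it sends a class $[\alpha]\in\pi_1(X,x_0)$ to the endpoint $\langle\alpha,\tilde x_0\rangle(1)\in F$ of the unique lift of $\alpha$ starting at the base point $\tilde x_0$ of $\widetilde X_\mathbf{G}$. Since $\widetilde X_\mathbf{G}=\varprojlim \widetilde X_i/G_i$, this lift is computed coordinatewise: its $i$-th component is the lift of $u_i\circ\alpha$ in the covering $\widetilde X_i/G_i$, whose endpoint corresponds, under the bijection $l_i$ of Proposition~\ref{prop fibre limit}, to the coset $(u_i)_\sharp[\alpha]\cdot G_i$. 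Assembling over $i$ shows that $\partial$ is precisely the composite $\pi_1(X)\to\varprojlim\pi_1(X_i)\to\varprojlim\pi_1(X_i)/G_i$ asserted in the statement. The isomorphism $\pi_1(\widetilde X_\mathbf{G})\cong\bigcap_i (u_i)_\sharp^{-1}(G_i)$ then follows at once: exactness makes $(p_\mathbf{G})_\sharp$ injective with image $\Ker\partial$, and a class $[\alpha]$ lies in $\Ker\partial$ exactly when $(u_i)_\sharp[\alpha]\in G_i$ for every $i$.

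For the last assertion I would specialise to normal $G_i\trianglelefteq\pi_1(X_i)$. Then each $\pi_1(X_i)/G_i$ is a group, the bonding maps $(u_{ij})_\sharp$ are homomorphisms, so $\varprojlim\pi_1(X_i)/G_i$ is a group and $\partial$ a homomorphism with image a subgroup. It remains to identify $\pi_0(\widetilde X_\mathbf{G})$. Exactness already gives that $F\to\pi_0(\widetilde X_\mathbf{G})$ is onto and that the preimage of the base point equals $\mathrm{im}\,\partial$; to upgrade this to a description of the whole target I would invoke the homogeneity supplied by the monodromy action of $\pi_1(X)$ on $F$. Two points of $F$ lie in one path-component of $\widetilde X_\mathbf{G}$ iff a path joining them projects to a loop in $X$, i.e.\ iff they lie in a single orbit of this action, and in the normal case that action is translation by $\partial(\pi_1(X))$, whence the orbits are exactly the cosets and $\pi_0(\widetilde X_\mathbf{G})\cong\bigl(\varprojlim\pi_1(X_i)/G_i\bigr)/\partial(\pi_1(X))$.

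I expect the main obstacle to be this final step, where a topological invariant $\pi_0(\widetilde X_\mathbf{G})$ must be matched with an algebraic coset space. The exact sequence itself is merely the fibration sequence stripped down by the lifting-space hypotheses, and the computation of $\Ker\partial$ is formal; the genuine work lies in the monodromy description of path-components and in checking carefully, in the inverse-limit setting and with due attention to left/right conventions, that for normal $G_i$ the $\pi_1(X)$-orbits coincide with the full system of cosets rather than merely singling out $\partial(\pi_1(X))$ as the orbit of the base point.
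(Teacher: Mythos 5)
Your proposal is correct and follows essentially the same route as the paper: both collapse the homotopy exact sequence of the Hurewicz fibration $p_\mathbf{G}$ using the totally path-disconnected fibres (Proposition~\ref{propliftings}) together with the identification $F\cong\varprojlim\pi_1(X_i)/G_i$ of Proposition~\ref{prop fibre limit}, and then compute $\partial$ coordinatewise via uniqueness of lifts. The only cosmetic differences are that the paper establishes $\im (p_\mathbf{G})_\sharp = \bigcap_i (u_i)_\sharp^{-1}(G_i)$ by explicitly assembling a lifted loop $\varprojlim\widetilde\alpha_i$ from the coordinate lifts rather than quoting exactness at $\pi_1(X)$, and that your monodromy argument for the normal case spells out what the paper dismisses as straightforward.
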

\begin{proof}
By its definition, $\partial$ maps every $\alpha\in\pi_1(X)$ to the end point of the lifting to $\widetilde X_\mathbf{G}$  of any representative of $\alpha$,
$\partial(\alpha)=\widetilde\alpha(1)\in F$. The uniqueness of liftings in covering spaces imply the commutativity of the following diagram 
$$\xymatrix{
\pi_1(X) \ar[r]^\partial \ar[d]_{(u_i)_\sharp} & F \ar[d]^{u_i}\\
\pi_1(X_i) \ar[r]^\partial \ar@{->>}[d] & F_i \ar@{=}[d]\\
\pi_1(X_i)/G_i \ar[r]_-{l_i} & F_i }
$$
which, together with Proposition \ref{prop fibre limit}, leads to the above description of the connecting map $\partial$. 

The description of $\partial$ implies that $(u_i)_\sharp(p_\mathbf{G}(\pi_1(\widetilde X_\mathbf{G})))\subseteq G_i$ for all $i\in\mathcal{I}$, therefore 
$\im p_\mathbf{G}\subseteq \bigcap_i (u_i)_\sharp^{-1}(G_i).$ Conversely, let $\alpha$ be a loop representing an element of $\bigcap_i (u_i)_\sharp^{-1}(G_i)$.
Then for every $i$ the loop $u_i\circ\alpha$ represents an element of $G_i\le\pi_1(X_i)$, so it lifts to a loop $\widetilde \alpha_i$ in $\widetilde X_i/G_i$. 
The uniqueness of liftings imply that $\bar u_{ij}\circ \widetilde \alpha_j=\widetilde \alpha_i$ whenever $i\le j$, hence we obtain a loop 
$\widetilde \alpha:=\varprojlim \widetilde \alpha_i$ in $\widetilde X_\mathbf{G}$. Clearly, $p_\mathbf{G}\circ\widetilde \alpha=\alpha$, therefore $\alpha\in\im p_\mathbf{G}$.
\\[2mm]
The proof of the last claim is straightforward.
\end{proof} 

\ \\
There are two special cases of the Theorem worth mentioning:\\[2mm]
 1.\,For a subgroup $G\le\pi_1(X)$ one may define a coherent thread $\mathbf{G}$ by letting $G_i:=(u_i)_\sharp(G)$. Then $(u_i)_\sharp^{-1}(G_i)=G\cdot \Ker (u_i)_\sharp$,
and so 
$$\pi_1(\widetilde X_G)\cong G\cdot\bigcap_\mathcal{I} \Ker(u_i)_\sharp.$$ 
Moreover if $G$ is a normal subgroup of $\pi_1(X)$, and if all homomorphisms 
$(u_i)_\sharp$ are surjective, then $\varprojlim \pi_1(X_i)/G_i$ is a group and $\pi_0(\widetilde X)$ is a set of cosets.\\[2mm]
 2.\,If $\mathbf{G}$ is a thread of trivial groups then clearly $\widetilde X_\mathbf{G}=\widetilde X$. In this case 
$$\pi_1(\widetilde X)\cong \bigcap_\mathcal{I} \Ker(u_i)_\sharp,$$
while $\pi_0(\widetilde X)$ may be identified with the set of cosets $\varprojlim \pi_1(X_i)/\partial(\pi_1(X))$.

\subsection{Deck transformations}

In the theory of covering spaces deck transformations provide a crucial connection between the algebra of the fundamental group and the geometry of 
the covering space. Their role is even more important in the theory of lifting spaces because they enclose the information about the interleaving
of the path-components of the total space and induce a topology on the fundamental group of the base.

A \emph{deck transformation} of a lifting space $p\colon L\to X$ is a homeomorphism $f\colon L\to L$, such that $p\circ f=p$:
$$\xymatrix{
L\ar[rr]^f \ar[dr]_p & & L \ar[dl]^p \\
& X}
$$
The set of all deck transformations of $p$ clearly form a group, which we denote by $A(p)$. We will consider two basic questions:
is the action of $A(p)$ free and is it transitive on the fibres of $p$. Our first result is valid for general lifting spaces.

\begin{proposition}
\label{prop free action}
Let $p\colon L\to X$ be a lifting space such that each path-component of $L$ is dense in $L$. Then  the action of $A(p)$ on $L$ is free.
\end{proposition}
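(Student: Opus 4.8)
The plan is to reduce freeness to the statement that a deck transformation with a fixed point must be the identity: if every nontrivial element of $A(p)$ acts without fixed points, the action is free. So I would start with an arbitrary $f\in A(p)$ satisfying $f(l_0)=l_0$ for some $l_0\in L$, and aim to propagate this single fixed point first across the whole path-component of $l_0$, and then, using the density hypothesis, across all of $L$.

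First I would exploit the unique path-lifting property that is built into the definition of a lifting projection (equivalently, the injectivity of $\overline p$). Let $l$ lie in the path-component of $l_0$ and choose a path $\gamma\colon I\to L$ with $\gamma(0)=l_0$ and $\gamma(1)=l$. Since $p\circ f=p$, the path $f\circ\gamma$ projects to the same path, $p\circ(f\circ\gamma)=p\circ\gamma$, and it starts at $f(l_0)=l_0=\gamma(0)$. Thus $\gamma$ and $f\circ\gamma$ are both liftings of $p\circ\gamma$ beginning at $l_0$, so by uniqueness $f\circ\gamma=\gamma=\langle p\circ\gamma,l_0\rangle$. Evaluating at $1$ gives $f(l)=l$. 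Hence $f$ fixes pointwise the entire path-component $P$ of $l_0$.

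The final step passes from $P$ to all of $L$ using the density assumption. The fixed-point set of $f$ is the equalizer of the two continuous maps $f$ and $\mathrm{id}_L$, hence a closed subset of $L$ (this is where one invokes that $L$ is Hausdorff, so that the diagonal of $L\times L$ is closed). Since this closed set contains $P$, which is dense in $L$ by hypothesis, it must equal $L$; therefore $f=\mathrm{id}_L$. I expect the only real subtlety to be precisely this closedness: the density hypothesis is exactly what replaces the discreteness of fibres used in the classical covering-space argument, and without the separation needed to make the fixed-point set closed, agreement on the dense path-component $P$ would not force global agreement. Everything else is a routine application of unique path lifting.
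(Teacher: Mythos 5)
Your proof is correct and takes essentially the same route as the paper's: unique path lifting propagates the single fixed point across its entire path-component, and density of that component then forces $f=\mathrm{id}_L$. Your explicit observation that the final step needs the fixed-point set to be closed (hence a Hausdorff-type separation on $L$) is actually more careful than the paper's proof, which asserts the conclusion from density without comment.
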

\begin{proof}
Assume that $f(x)=x$ for some $f\in A(p)$ and $x\in L$. For every path $\alpha\colon (I,0)\to (L,x)$ the equality $p\circ f\circ \alpha=p\circ\alpha$
implies that $f\circ\alpha$ and $\alpha$ are lifts of the path $p\circ\alpha\colon I\to X$. Since $f(\alpha(0))=f(x)=x=\alpha(0)$ and since  
$p$ has unique path liftings, we conclude that $f(\alpha(1))=\alpha(1)$ and so $f$ fixes all elements of the path-component of $L$ that contains $x$. 
But each path-component of $L$ is dense in $L$ and so the deck transformation $f$ must be the identity.
\end{proof}

We must therefore determine when are the path-components of $L$ dense. For inverse limits of coverings it is sufficient to consider the
function $\partial$ that we introduced earlier. 

\begin{proposition}
\label{prop dense leaf}
Let $\mathbf{X}$ be an inverse system of spaces, $\mathbf{G}$ a coherent thread of groups and and let $F$ be the fibre of $p_\mathbf{G}$. 

If the image of the function  $\partial\colon \pi_1(X)\to \pi_0(F)=\varprojlim \pi_1(X_i)/G_i$ is dense in 
$\pi_0(F)$ (with respect to the inverse limit topology) then every path-component of $\widetilde X_\mathbf{G}$ is dense in $\widetilde X_\mathbf{G}$.

Conversely, if $X$ is locally path-connected and every path-component of $\widetilde X_\mathbf{G}$ is dense, then the image of $\partial$ is dense in $F$.
\end{proposition}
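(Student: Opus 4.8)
The plan is to translate both claims into statements about the \emph{monodromy action} of $\pi_1(X)$ on the fibre $F=\varprojlim \pi_1(X_i)/G_i$, and then to relate density of orbits in $F$ to density of path-components in the total space $\widetilde X_\mathbf{G}$. Recall from Proposition \ref{propliftings}(2) that each path $\gamma$ in $X$ induces a homeomorphism $f_\gamma\colon p_\mathbf{G}^{-1}(\gamma(0))\to p_\mathbf{G}^{-1}(\gamma(1))$, $f_\gamma(\tilde y)=\langle\gamma,\tilde y\rangle(1)$, and that $f_\gamma(\tilde y)$ always lies in the path-component of $\tilde y$. For a based loop $\alpha$ this restricts to a self-homeomorphism of $F$ which, by Proposition \ref{prop fibre limit} and the description of $\partial$, acts on the $i$-th coordinate as left translation by $(u_i)_\sharp(\alpha)$ on $F_i=\pi_1(X_i)/G_i$; in particular $\partial(\alpha)=f_\alpha(\tilde x_0)$, so $\im\partial$ is precisely the orbit $C_0\cap F$ of the base point, where $C_0$ is the component of $\tilde x_0$. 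Since $X$ is path-connected, every path-component $C$ of $\widetilde X_\mathbf{G}$ meets $F$, and $C\cap F$ is a single orbit of this action. Throughout I will use that, because $\mathcal I$ is directed, the sets $\bar u_i^{-1}(\,\cdot\,)$ for a single index $i$ form a basis for the topology of $\widetilde X_\mathbf{G}$, and that the sets $(\bar u_i|_F)^{-1}(c)$ with $c\in \bar u_i(F)$ are the nonempty basic open subsets of $F$.

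For the forward implication I would argue in two steps. First, density of $\im\partial$ forces density of \emph{every} orbit $C\cap F$: writing $H_i:=(u_i)_\sharp(\pi_1(X))\le\pi_1(X_i)$, density of $\im\partial$ says exactly that $H_i\cdot\bar e_i=\bar u_i(F)$ for each $i$, where $\bar e_i$ is the trivial coset; any $\bar u_i(\tilde y)\in\bar u_i(F)$ then equals $h\bar e_i$ for some $h\in H_i$, so its orbit is $H_i h\bar e_i=H_i\bar e_i=\bar u_i(F)$. Hence the orbit of an arbitrary $\tilde y$ surjects onto $\bar u_i(F)$ at every level and is therefore dense in $F$. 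Second, I would upgrade density of $C\cap F$ in $F$ to density of $C$ in $\widetilde X_\mathbf{G}$: given a nonempty basic open $U=\bar u_i^{-1}(V_i)$ and a point $\tilde w\in U$ over $x':=p_\mathbf{G}(\tilde w)$, choose a path $\delta$ from $x'$ to $x_0$; the homeomorphism $f_{\overline\delta}\colon F\to p_\mathbf{G}^{-1}(x')$ carries $C\cap F$ into $C$ and satisfies $f_{\overline\delta}(f_\delta(\tilde w))=\tilde w\in U$, so $f_{\overline\delta}^{-1}(U)$ is a nonempty open subset of $F$; density of $C\cap F$ supplies a point $\tilde c\in C\cap F$ there, whence $f_{\overline\delta}(\tilde c)\in C\cap U$.

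For the converse, local path-connectedness enters decisively. Given a nonempty basic open $(\bar u_i|_F)^{-1}(c)$ of $F$ (so $c\in\bar u_i(F)$ lies over $x_i=u_i(x_0)$), I would fix an evenly covered neighbourhood $U_i\ni x_i$ for the covering $p_i\colon\widetilde X_i/G_i\to X_i$ and let $\tilde U_i$ be the sheet containing $c$; since $X$ is locally path-connected, choose a path-connected open $V\ni x_0$ with $u_i(V)\subseteq U_i$. The open set $W:=p_\mathbf{G}^{-1}(V)\cap\bar u_i^{-1}(\tilde U_i)$ is nonempty, so density of $C_0$ yields a point $\tilde w_0\in C_0\cap W$; lifting a path in $V$ from $p_\mathbf{G}(\tilde w_0)$ to $x_0$ starting at $\tilde w_0$ produces a point $\tilde w_1\in C_0\cap F$ whose $i$-th coordinate remains in the sheet $\tilde U_i$ and hence, being the unique point of that sheet over $x_i$, equals $c$. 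Thus $\im\partial=C_0\cap F$ meets $(\bar u_i|_F)^{-1}(c)$, and $\im\partial$ is dense.

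The main obstacle is exactly this last passage, from a dense path-component back to a dense orbit in the non-discrete fibre: a priori the closure of a path-component might accumulate on $F$ only through points lying outside $F$, so one must manufacture points of the component \emph{inside} $F$ with a prescribed level-$i$ coordinate. The device that overcomes it is to work at a single level, exploiting local triviality of the covering $p_i$ together with local path-connectedness of $X$ to slide a nearby point of $C_0$ along a path into the fibre while trapping its $i$-th coordinate in one sheet.
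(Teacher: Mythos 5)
Your proof is correct and follows essentially the same route as the paper's: both arguments identify $\im\partial$ with the trace of a path-component on the fibre, transport density between fibres via the homeomorphisms $f_\gamma$ for the forward direction, and, for the converse, slide a nearby point of the dense component into the fibre through a small path-connected neighbourhood while trapping its level-$i$ coordinate in a single sheet of an evenly covered set. The only differences are in presentation: your coset computation with $H_i=(u_i)_\sharp(\pi_1(X))$ spells out what the paper dispatches with the remark that a change of base-point merely conjugates $\partial$ without destroying density, and your converse gets the required path-connected $V$ from continuity of $u_i$ plus local path-connectedness directly, where the paper instead invokes the openness of $p_\mathbf{G}$ from Proposition \ref{prop open map}.
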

\begin{proof}
Recall that we have identified $F$ with $\pi_0(F)$ and that the function $\partial$ is determined by the liftings of representatives of $\pi_1(X)$ in 
$\widetilde X_\mathbf{G}$. This in particular means that all elements of the image of $\partial$ belong to the same path-component $L_0$ of $\widetilde X_\mathbf{G}$.

For a given $\tilde x\in \widetilde X_\mathbf{G}$ let $\alpha$ be a path in $X$ from $x_0$ to $x:=p_\mathbf{G}(\tilde x)$. Then the formula 
$g(\tilde y):=\langle\alpha,\tilde y\rangle(1)$  determines a homeomorphism between $F$ and the fibre $p_\mathbf{G}^{-1}(x)$. This implies that
$\tilde x$ is in the closure of $g(\im \partial)$, and hence in the closure of the path-component $L_0$, because clearly $g(\im \partial)\subset L_0$. 

To show that some other path-component $L_1$ of $\widetilde X_\mathbf{G}$ is also dense, it is sufficient to choose a base-point for $\widetilde X_\mathbf{G}$ 
in $L_1$ and repeat the above argument. Observe that a different choice of a base-point simply conjugates the function  $\partial$, so that its image is still
dense in $F$.

To show the converse statement, assume that some $\tilde x\in F=p_\mathbf{G}^{-1}(x_0)$ is in the closure of the path component $L_0$ and let 
$\widetilde U:=\prod_{i\in\mathcal{F}} \widetilde U_i\times \prod_{i\notin\mathcal{F}}  \widetilde X_i/G_i$ be a product neighbourhood of $\tilde x$.
Here $\mathcal F$ is some finite subset of $\mathcal I$ and each $\widetilde U_i$ is homeomorphically projected by $p_i$ to some elementary neighbourhood 
in $X_i$. We must show that $\widetilde U$ intersects $\im\partial=L_0\cap F$. 
By Proposition \ref{prop open map} the projection $p_\mathbf{G}$ is an open map, so we may find a path-connected open set $V$ such that 
$x_0\in V \subseteq p_\mathbf{G}(\widetilde U)$. Since $\tilde x$ is in the closure of $L_0$ there exists 
$\tilde y\in\widetilde U\cap p_\mathbf{G}^{-1}(V)\cap L_0$. Let $\alpha$ be a path in $V$ from $p_\mathbf{G}(\tilde y)$ to $x_0$. Then $\langle\alpha,\tilde y\rangle(1)$ is
by construction an element of $\im \partial\cap \widetilde U$, which proves that $\im\partial$ is dense in $F$
\end{proof}

For arbitrary lifting spaces it can be sometimes hard to determine whether the image of $\partial$ is dense in the fibre, but for inverse limits of coverings we
may rely on  the following algebraic criterion. Observe that if in an inverse system of sets $(A_i, u_{ij},\mathcal{I})$ we replace each 
$A_i$ by the corresponding  \emph{stable image} $A_i^\St:=\bigcap_{j\ge i} u_{ij}(A_j)$
(and the bonding maps by their restrictions) then $\varprojlim(A_i, u_{ij},\mathcal{I})=\varprojlim(A_i^\St, u_{ij},\mathcal{I})$, i.e. the inverse limit depends only on stable images of bonding maps. In fact, the converse is also true, as the stable image is precisely the image of the projection from the inverse limit, 
$A_i^\St=\varphi_i(\varprojlim A_i)$. 
It is clear that for every morphism from a set $A$ into the system, $(\varphi_i\colon A\to A_i,i\in\mathcal{I})$, we have $\varphi_i(A)\subseteq A_i^\St$ 
for all $i\in\mathcal{I}$. We will say that such a morphism is \emph{stably surjective} if $\varphi_i(A)=A_i^\St$ for all $i\in\mathcal{I}$.
Obviously, every morphism that consists of surjective maps is stably surjective.

\begin{lemma}
\label{lem dense image}
Let $(\varphi_i\colon A\to A_i, \mathcal{I})$ be a stably surjective morphism from $A$ to an inverse system of sets 
$(A_i,u_{ij},\mathcal{I})$. Then  the image of the limit map $\varphi:=\varprojlim \varphi_i\colon A\to \varprojlim A_i$ 
is dense in $\varprojlim A_i$ (with respect to the product topology of discrete spaces). 
\end{lemma}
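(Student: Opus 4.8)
The plan is to reduce the density statement to a condition at a single index, exploiting the directedness of $\mathcal{I}$, after which stable surjectivity finishes the argument at once.

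First I would record the relevant neighborhood basis. A point $b=(b_i)\in\varprojlim A_i$ has, in the subspace topology coming from the product of discrete spaces, the basic neighborhoods $N_{\mathcal{F}}(b):=\{c\in\varprojlim A_i : c_i=b_i \text{ for all } i\in\mathcal{F}\}$, where $\mathcal{F}$ runs over the finite subsets of $\mathcal{I}$.

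The key observation is that these neighborhoods are in fact controlled by a single coordinate. Since $\mathcal{I}$ is directed, any finite $\mathcal{F}$ admits an upper bound $k\in\mathcal{I}$; and for any thread $c$ the compatibility relations $c_i=u_{ik}(c_k)$ force $c_i=b_i$ for every $i\in\mathcal{F}$ as soon as $c_k=b_k$. Hence $\{c : c_k=b_k\}\subseteq N_{\mathcal{F}}(b)$, so the single-index sets $\{c\in\varprojlim A_i : c_k=b_k\}$ already form a neighborhood basis at $b$. It therefore suffices to produce, for each $b$ and each $k\in\mathcal{I}$, an element $a\in A$ with $\varphi_k(a)=b_k$.

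This is exactly what stable surjectivity gives: the coordinate $b_k$ is the image of $b$ under the projection from the limit, and by the remark preceding the lemma this projection has image precisely the stable image $A_k^\St$; thus $b_k\in A_k^\St=\varphi_k(A)$, so the required $a$ exists. Then $\varphi(a)$ lies in the chosen basic neighborhood of $b$, which shows that $\varphi(A)$ meets every basic open set and is therefore dense in $\varprojlim A_i$. I do not anticipate a genuine obstacle here; the only step needing care is the topological reduction to a single coordinate, and once that is in place the definition of stable surjectivity does the rest. Equivalently, one may phrase the whole argument as the remark that $\varphi(A)$ already surjects onto each stable image $A_k^\St$, which is precisely the criterion for density in an inverse limit of discrete spaces.
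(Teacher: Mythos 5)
Your proof is correct and follows essentially the same route as the paper's: pick a basic neighborhood determined by a finite set $\mathcal{F}$, use directedness to pass to an upper bound $k$ of $\mathcal{F}$, use stable surjectivity to find $a\in A$ with $\varphi_k(a)=b_k$ (the coordinate $b_k$ lies in $A_k^{\St}$ because it is a coordinate of a thread), and then compatibility with the bonding maps forces agreement at all indices in $\mathcal{F}$. Your explicit observation that the single-index sets already form a neighborhood basis is only a cosmetic repackaging of the same step in the paper's argument.
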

\begin{proof}
The statement is trivial when the index set $\mathcal{I}$ is finite, so we will assume that $\mathcal{I}$ is infinite.
Let $(a_i)$ be an element of $\varprojlim A_i$. By definition of a product topology, a local basis of neighbourhoods at $(a_i)$ is given by the sets of the form 
$U_\mathcal{F}:=\prod_{i\in \mathcal{F}} \{a_i\}\times \prod_{i\notin \mathcal{F}} {A_i}$, where $\mathcal{F}$ is a finite subset of $\mathcal{I}$. 
Given any such $\mathcal{F}$, let $j\in \mathcal{I}$ be bigger then all elements of $\mathcal{F}$. By the assumptions, there exists $a\in A$ such that 
$\varphi_j(a)=a_j$, but then $\varphi(a)_i=a_i$ for every $i\le j$, and so $\varphi(a)\in U_\mathcal{F}$. We conclude that $\varphi(A)$ is dense in $\varprojlim A_i$.
\end{proof}

We may finally formulate our main result about the free action of the group of deck transformations of an inverse limit of coverings. 

\begin{theorem}
\label{thm st surj}
Let $\mathbf{X}$ be an inverse system of spaces and let $\mathbf{G}$ be a coherent thread of groups. If the morphism 
$((u_i)_\sharp\colon \pi_1(X)\to \pi_1(X_i)/G_i,\mathcal{i})$ is stably surjective, then $A(p_\mathbf{G})$ acts freely on $\widetilde X_\mathbf{G}$. 
\end{theorem}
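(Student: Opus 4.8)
The plan is to chain together three results established above, namely Lemma~\ref{lem dense image}, Proposition~\ref{prop dense leaf}, and Proposition~\ref{prop free action}. The entire argument reduces to showing that the stable surjectivity hypothesis forces the image of the connecting function $\partial$ to be dense in the fibre $F=\varprojlim \pi_1(X_i)/G_i$; once density is in hand, the conclusion follows formally.

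First I would identify the connecting function $\partial\colon\pi_1(X)\to F$ of the homotopy exact sequence with the limit of the morphism appearing in the statement. By Theorem~\ref{thm ex seq}, $\partial$ is the composite $\pi_1(X)\longrightarrow\varprojlim\pi_1(X_i)\longrightarrow\varprojlim\pi_1(X_i)/G_i$, where the first arrow is $\varprojlim(u_i)_\sharp$ and the second is induced by the quotient maps. Composing each $(u_i)_\sharp\colon\pi_1(X)\to\pi_1(X_i)$ with the quotient $\pi_1(X_i)\to\pi_1(X_i)/G_i$ produces exactly the family $\big((u_i)_\sharp\colon\pi_1(X)\to\pi_1(X_i)/G_i\big)$ named in the hypothesis, and $\partial$ is precisely the limit map of this morphism of inverse systems of sets.

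Second, I would apply Lemma~\ref{lem dense image} with $A=\pi_1(X)$, $A_i=\pi_1(X_i)/G_i$, and $\varphi_i=(u_i)_\sharp$. Since this morphism is stably surjective by assumption, the lemma yields that $\partial=\varprojlim(u_i)_\sharp$ has dense image in $F=\varprojlim\pi_1(X_i)/G_i$ with respect to the inverse-limit topology. Proposition~\ref{prop dense leaf} then immediately gives that every path-component of $\widetilde X_\mathbf{G}$ is dense in $\widetilde X_\mathbf{G}$, and finally Proposition~\ref{prop free action}, applied to the lifting space $p_\mathbf{G}$, concludes that $A(p_\mathbf{G})$ acts freely.

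I do not anticipate a serious obstacle, since the essential content is carried by the three cited results. The only point demanding care is the bookkeeping in the first step: one must confirm that the geometrically defined $\partial$ (assigning to a loop the endpoint of its lift, as in the proof of Theorem~\ref{thm ex seq}) genuinely coincides with the abstract limit of the purely algebraic morphism, so that the stable surjectivity hypothesis is being applied to the correct morphism of inverse systems. Once this identification is made explicit, the argument is a direct assembly of the preceding lemmas.
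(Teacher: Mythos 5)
Your proposal is correct and follows essentially the same route as the paper's own proof: identify $\partial$ with $\varprojlim (u_i)_\sharp$ via Theorem~\ref{thm ex seq}, apply Lemma~\ref{lem dense image} to get density of $\im\partial$ in the fibre (identified with $\varprojlim \pi_1(X_i)/G_i$ by Proposition~\ref{prop fibre limit}), then chain Proposition~\ref{prop dense leaf} and Proposition~\ref{prop free action}. The bookkeeping point you flag about matching the geometric $\partial$ with the algebraic limit map is exactly what the paper's appeal to Theorem~\ref{thm ex seq} accomplishes.
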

\begin{proof}
By Theorem \ref{thm ex seq} the function $\partial$ can be identified with the inverse limit $\varprojlim (u_i)_\sharp$. By Lemma \ref{lem dense image} the image
of $\partial$ is dense in the $\varprojlim \pi_1(X_i)/G_i$, which is by Proposition \ref{prop fibre limit} homeomorphic to the fibre  of $p_\mathbf{G}$.
Proposition \ref{prop dense leaf} implies that the path-components of $\widetilde X_\mathbf{G}$ are dense, and finally Proposition \ref{prop free action}
implies that the action of $A(p_\mathbf{G})$ on $\widetilde X_\mathbf{G}$ is free.
\end{proof}

Our next objective is to examine the transitivity of the action of $A(p)$ on the fibre of $p$. It is well-known that for a covering space 
$p\colon \widetilde X/G\to X$ the action of $A(p)$ on the fibre is transitive if and only if $G$ is a normal subgroup of $\pi_1(X)$. 
It is therefore reasonable to restrict our attention to coherent threads of normal subgroups. Let 
$(X_i,u_{ij},\mathcal{I})$ be an inverse system of spaces and $\mathbf{G}=(G_i,\mathcal{I})$ a coherent thread of normal subgroups, i.e. $G_i\triangleleft \pi_1(X_i)$ 
for every $i\in \mathcal{I}$. Then we have group isomorphisms $l_i\colon A(p_i)\to\pi_1(X_i)/G_i$, explicitly given as $l_i(f):=[p\circ\tilde\alpha]$, where 
$\tilde \alpha$ is any path in $\widetilde X_i/G_i$ from the base-point $\tilde x_i$ to its image $f(\tilde x_i)$, and $[p\circ\tilde\alpha]$ is the coset
in $\pi_1(X_i)/G_i$ determined by the loop $p\circ\tilde\alpha$ (cf. the description in \cite[Section 2.6]{Spanier}). Moreover, whenever $j\ge i$ there is a homomorphism
$\hat u_{ij}\colon A(p_j)\to A(p_i)$, where $\hat u_{ij}(f)$ is defined to be the unique deck transformation of $p_i\colon \widetilde X_i/G_i\to X_i$ that maps the base-point
$\tilde x_i$ to $\hat u_{ij}(f(\tilde x_j))$. It is easy to check that whenever $i\le j$ we have a commutative diagram
$$\xymatrix{
A(p_j) \ar[r]^{\hat u_{ij}} \ar[d]_{l_j}^\cong & A(p_i) \ar[d]_{l_i}^\cong  \\
\pi_1(X_j)/G_j \ar[r]_{(u_{ij})_\sharp} & \pi_1(X_i)/G_i}
$$
Thus the homomorphisms $l_i$ determine an isomorphism of inverse systems 
$$(l_i)\colon (A(p_i),\hat u_{ij},\mathcal{I})\to (\pi_1(X_i)/G_i,(u_{ij})_\sharp, \mathcal{I}).$$ 

\begin{theorem}
\label{thm A(p)}
There is an isomorphism of groups 
$$\varprojlim l_i\colon \varprojlim A(p_i)\stackrel{\cong}{\longrightarrow} \varprojlim\pi_1(X_i)/G_i.$$ 
The group  $\varprojlim A(p_i)$ acts freely and transitively on the fibre of $p_\mathbf{G}$. Since 
$\varprojlim A(p_i)$ is a subgroup of $ A(p_\mathbf{G})$ it follows that  $A(p_\mathbf{G})$ also acts transitively on the fibre of $p_\mathbf{G}$.
\end{theorem}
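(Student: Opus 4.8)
The plan is to establish the three assertions in sequence, leveraging the isomorphism of inverse systems $(l_i)$ that was constructed immediately before the statement. First I would obtain the group isomorphism $\varprojlim l_i$ by applying the inverse limit functor to the isomorphism of inverse systems $(l_i)\colon (A(p_i),\hat u_{ij},\mathcal{I})\to (\pi_1(X_i)/G_i,(u_{ij})_\sharp,\mathcal{I})$. Since the inverse limit functor preserves isomorphisms of systems, this yields immediately the desired group isomorphism $\varprojlim A(p_i)\cong\varprojlim\pi_1(X_i)/G_i$. The only point requiring care is that each $l_i$ is a genuine group homomorphism, which holds precisely because the $G_i$ are normal, so that the quotients $\pi_1(X_i)/G_i$ are groups rather than mere coset sets.

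Next I would construct the action of $\varprojlim A(p_i)$ on the total space $\widetilde X_\mathbf{G}=\varprojlim \widetilde X_i/G_i$ levelwise. A thread $(f_i)\in\varprojlim A(p_i)$ consists of compatible deck transformations, meaning $\bar u_{ij}\circ f_j=f_i\circ\bar u_{ij}$ for $i\le j$; this compatibility is exactly what allows the maps $f_i$ to assemble into a self-homeomorphism $\varprojlim f_i$ of $\widetilde X_\mathbf{G}$. Each $f_i$ satisfies $p_i\circ f_i=p_i$, so the limit map commutes with $p_\mathbf{G}$, and hence $\varprojlim f_i\in A(p_\mathbf{G})$. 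This simultaneously produces the embedding $\varprojlim A(p_i)\hookrightarrow A(p_\mathbf{G})$ asserted at the end. To verify freeness and transitivity on the fibre $F=\varprojlim F_i$, I would transport the action across the homeomorphisms $l_i\colon \pi_1(X_i)/G_i\to F_i$ of Proposition \ref{prop fibre limit}, reducing the claim to the statement that the group $\varprojlim\pi_1(X_i)/G_i$ acts on the set $\varprojlim\pi_1(X_i)/G_i$ by the limit of the levelwise left-translation actions. At each finite level the deck group $A(p_i)\cong\pi_1(X_i)/G_i$ acts freely and transitively on $F_i$ by the classical covering-space theory (this is the normal case of \cite[Section 2.6]{Spanier}), so the action is simply left multiplication of the group on itself.

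The remaining work is to check that freeness and transitivity pass to the inverse limit. Freeness is straightforward: if a thread $(g_i)$ fixes some $(a_i)\in F$, then each $g_i$ fixes $a_i$, whence $g_i=\mathrm{id}$ by levelwise freeness, so $(g_i)$ is the identity. Transitivity is the step I expect to be the main obstacle, since inverse limits do not preserve surjectivity in general. Given $(a_i),(b_i)\in\varprojlim\pi_1(X_i)/G_i$, at each level there is a unique $g_i\in\pi_1(X_i)/G_i$ with $g_i\cdot a_i=b_i$, namely $g_i=b_i a_i^{-1}$; the uniqueness forces the $g_i$ to be automatically compatible with the bonding homomorphisms $(u_{ij})_\sharp$, so the thread $(g_i)$ lies in $\varprojlim\pi_1(X_i)/G_i$ and carries $(a_i)$ to $(b_i)$. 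Thus transitivity survives passage to the limit precisely because the translating element is uniquely determined at each stage and the bonding maps are homomorphisms. Transporting back along the isomorphisms $l_i$ shows that $\varprojlim A(p_i)$ acts freely and transitively on $F$, and since this group sits inside $A(p_\mathbf{G})$, the larger group $A(p_\mathbf{G})$ acts transitively on the fibre as well.
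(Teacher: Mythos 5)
Your proof is correct, and its skeleton --- the isomorphism obtained by applying $\varprojlim$ to the system isomorphism $(l_i)$, level-wise freeness, and level-wise unique translating elements whose uniqueness forces them to form a thread --- is parallel to the paper's; the genuine difference is that your transitivity argument runs on the algebraic side where the paper's runs on the geometric side. The paper takes, for two fibre points $(\tilde x_i)$ and $(\tilde x'_i)$, the unique $f_i\in A(p_i)$ with $f_i(\tilde x_i)=\tilde x'_i$, and then proves the compatibility $f_i\circ\bar u_{ij}=\bar u_{ij}\circ f_j$ by a monodromy argument: both composites cover $u_{ij}\circ p_j$ and agree at one point, hence agree everywhere. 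You instead transport to $\varprojlim\pi_1(X_i)/G_i$, where the translating element $g_i=b_ia_i^{-1}$ is automatically a thread because the bonding maps $(u_{ij})_\sharp$ are group homomorphisms (this is where normality of the $G_i$ enters, as it should). Your route makes the compatibility check trivial, but the geometric input does not vanish; it is concentrated in two facts you invoke: (i) that $l_i$ intertwines the deck action on $F_i$ with the translation action of $\pi_1(X_i)/G_i$ on itself, which is classical (\cite[Section 2.6]{Spanier}); and (ii) that a thread $(f_i)\in\varprojlim A(p_i)$ --- where the paper defines the bonding maps $\hat u_{ij}$ only by a basepoint condition, $\hat u_{ij}(f_j)$ being the deck transformation sending $\tilde x_i$ to $\bar u_{ij}(f_j(\tilde x_j))$ --- actually satisfies the full commutation $\bar u_{ij}\circ f_j=f_i\circ\bar u_{ij}$ that you need both for the level-wise action on $\varprojlim \widetilde X_i/G_i$ and for the inclusion $\varprojlim A(p_i)\le A(p_\mathbf{G})$. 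Fact (ii) is exactly the paper's monodromy lemma (two lifts of $u_{ij}\circ p_j$ agreeing at the basepoint coincide on the path-connected space $\widetilde X_j/G_j$), so it should be stated and proved rather than taken as the definition of the thread condition; with that one short lemma inserted your argument is complete, and it is arguably cleaner than the paper's, since transitivity reduces to the observation that a group acts simply transitively on itself by translation and that such translations automatically assemble into threads in the inverse limit.
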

\begin{proof}
That $\varprojlim l_i$ is an isomorphism follows from the above discussion. Toward the proof of transitivity, let $(\tilde x_i),(\tilde x'_i)$ 
be elements of the fibre of $p_\mathbf{G}$: 
then for every $i\in\mathcal{I}$ there exists a unique deck transformation $f_i\in A(p_i)$ such that $f_i(\tilde x_i)=\tilde x'_i$. In order to prove that
transformations $f_i$ represent an element of the inverse limit, consider an element $\tilde y\in \widetilde X_j/G_j$ and a path 
$\widetilde \alpha \colon (I,0,1)\to (\widetilde X_j/G_j, \tilde x_j,\tilde y)$. Then
$$p_i f_i \bar u_{ij}\widetilde\alpha=p_i \bar u_{ij}\widetilde\alpha=u_{ij} p_j\widetilde\alpha=u_{ij} p_j f_j\widetilde\alpha=p_i \bar u_{ij} f_j\widetilde\alpha,$$
which means that $f_i \bar u_{ij}\widetilde\alpha$ and $\bar u_{ij} f_j\widetilde\alpha$ are paths with the same initial point and the same projection in 
$\widetilde X_i/G_i$, so by the monodromy theorem they coincide. In particular $f_i(\bar u_{ij}(\tilde y))=\bar u_{ij}(f_j(\tilde y))$ for every
$\tilde y\in \widetilde X_j/G_j$, therefore $(f_i)$ is an element of $\varprojlim A(p_i)$ that maps $(\tilde x_i)$ to $(\tilde x'_i)$. 

On the other side, if $\varprojlim f_i(\tilde x_i)=(\tilde x_i)$ for some $(f_i)\in\varprojlim A_(p_i)$ and $(x_i)\in\widetilde X_\mathbf{G}$ then $f_i(\widetilde x_i)=x_i$ for 
each $i\in\mathcal{I}$. It follows that all $f_i$ are identity deck transformations on their respective domains, therefore the action of $\varprojlim A(p_i)$
is free. 
\end{proof}

One could expect that $A(p)$ coincides with $\varprojlim A_(p_i)$ but we don't know if that is true in general. In fact, there are inverse limits of coverings 
where the path-components are not dense (like the Warsawonoid from Example \ref{ex Warsawonoid}), and so  it is conceivable that the action
of $A(p)$ may not be free. In view of the above Theorem, this would imply that $\varprojlim A(p_i)$ is a proper subgroup of $A(p)$. This problem disappears, if 
the fundamental group of the limit maps surjectively to the fundamental groups of its approximations, so we have the following result.

\begin{corollary}
\label{cor A(p) on fibre}
Let $\mathbf{X}$ be an inverse system of spaces and let $\mathbf{G}$ be a coherent thread of normal subgroups. If the morphism 
$((u_i)_\sharp\colon \pi_1(X)\to \pi_1(X_i)/G_i,\mathcal{I})$ is stably surjective, then  $A(p_\mathbf{G})$ 
acts freely and transitively on the fibre of $p_\mathbf{G}$ and is therefore isomorphic to $\varprojlim A(p_i)$. 

Furthermore, the map $p=\varprojlim p_i\colon \widetilde X_\mathbf{G}\to X$ induces a continuous bijection %homeomorphism 
$\overline p\colon \widetilde X_\mathbf{G}/A(p_\mathbf{G})\to X$. In addition, if $X$ is locally path-connected then $\overline p$ is a homeomorphism.
\end{corollary}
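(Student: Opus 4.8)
The plan is to assemble the two preceding theorems and then carry out the quotient-space bookkeeping. First I would establish that $A(p_\mathbf{G})$ acts freely and transitively on the base fibre $F=p_\mathbf{G}^{-1}(x_0)$. By Theorem \ref{thm st surj}, stable surjectivity of $((u_i)_\sharp)$ guarantees that $A(p_\mathbf{G})$ acts freely on all of $\widetilde X_\mathbf{G}$, and hence on $F$. By Theorem \ref{thm A(p)} the subgroup $\varprojlim A(p_i)\le A(p_\mathbf{G})$ already acts transitively on $F$, so a fortiori the larger group $A(p_\mathbf{G})$ does too. To deduce the isomorphism, I would fix $\tilde x_0\in F$ and observe that freeness makes the orbit maps $g\mapsto g(\tilde x_0)$ injective for both groups; since $\varprojlim A(p_i)$ is already transitive, its orbit map is onto $F$, and then freeness of $A(p_\mathbf{G})$ forces every $g\in A(p_\mathbf{G})$ to coincide with an element of $\varprojlim A(p_i)$. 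Hence $A(p_\mathbf{G})=\varprojlim A(p_i)$.

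Next I would produce the map $\overline p$. Since each deck transformation satisfies $p_\mathbf{G}\circ f=p_\mathbf{G}$, the projection is constant on $A(p_\mathbf{G})$-orbits, so it factors through the quotient as a continuous map $\overline p\colon \widetilde X_\mathbf{G}/A(p_\mathbf{G})\to X$ with $p_\mathbf{G}=\overline p\circ q$, where $q$ is the quotient projection. Surjectivity of $\overline p$ reduces to surjectivity of $p_\mathbf{G}$, which holds because $X$ is path-connected and $F$ is nonempty: any $x\in X$ is joined to $x_0$ by a path whose lift ends over $x$. Injectivity of $\overline p$ is exactly the assertion that $A(p_\mathbf{G})$ is transitive on every fibre $p_\mathbf{G}^{-1}(x)$, not merely on $F$. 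To obtain this I would transport transitivity along paths: given $x$, choose a path $\alpha$ from $x_0$ to $x$, so that by Proposition \ref{propliftings}(2) the map $f_{\overline\alpha}(\tilde y)=\langle\overline\alpha,\tilde y\rangle(1)$ is a homeomorphism $p_\mathbf{G}^{-1}(x)\to F$. The crucial observation is that any $g\in A(p_\mathbf{G})$ commutes with $f_{\overline\alpha}$, since $p_\mathbf{G}\circ g=p_\mathbf{G}$ forces $g\circ\langle\overline\alpha,\tilde y\rangle=\langle\overline\alpha,g(\tilde y)\rangle$ by uniqueness of path-lifting; transitivity on $F$ therefore pulls back to transitivity on $p_\mathbf{G}^{-1}(x)$, giving injectivity of $\overline p$.

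Finally, assuming $X$ locally path-connected, I would upgrade $\overline p$ to a homeomorphism by showing it is open. By Proposition \ref{prop open map} the projection $p_\mathbf{G}$ is then open; so for open $W\subseteq\widetilde X_\mathbf{G}/A(p_\mathbf{G})$ the set $q^{-1}(W)$ is open by definition of the quotient topology, whence $p_\mathbf{G}(q^{-1}(W))$ is open in $X$, and since $q$ is surjective this set equals $\overline p(W)$. Thus $\overline p$ is an open continuous bijection, hence a homeomorphism. I expect the main obstacle to be the injectivity step, namely the promotion of transitivity from the single fibre $F$ to all fibres: this is where one genuinely needs the interplay between the free/transitive action supplied by the two theorems and the commutation of deck transformations with the path-lifting homeomorphisms, whereas the freeness, the group identification, and the openness argument are comparatively formal.
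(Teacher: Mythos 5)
Your proof is correct and takes essentially the same route as the paper: it assembles Theorem \ref{thm st surj} (freeness from stable surjectivity) and Theorem \ref{thm A(p)} (transitivity of the subgroup $\varprojlim A(p_i)$) to obtain the free and transitive action and the identification $A(p_\mathbf{G})\cong\varprojlim A(p_i)$, then factors $p_\mathbf{G}$ through the quotient and uses openness of $p_\mathbf{G}$ (Proposition \ref{prop open map}) for the homeomorphism claim. The only difference is that you spell out two steps the paper dismisses as clear, namely the orbit-map argument forcing every deck transformation into $\varprojlim A(p_i)$, and the transport of transitivity from the base fibre to all fibres via uniqueness of path-lifting (which is what injectivity of $\overline p$ actually requires) --- added detail rather than a different argument.
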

\begin{proof}
The action of $A(p_\mathbf{G})$ on the fibre $F$ is free and transitive by the assumptions and 
Theorems \ref{thm st surj} and \ref{thm A(p)}. Thus we may conclude that there is a bijection $A(p_\mathbf{G})\to F\cong\varprojlim A(p_i)$, which
is clearly compatible with the group structures. 

Map $\overline p$ is induced by $p\colon \widetilde X_\mathbf{G}\to X$ as in the following diagram
$$\xymatrix{
\widetilde X_\mathbf{G} \ar[r]^p \ar[d] & X\\
\widetilde X_\mathbf{G}/A(p_\mathbf{G}) \ar@{-->}[ru]_{\overline{p}} 
}$$
and is clearly a continuous bijection. If $X$ is locally connected then Proposition \ref{prop open map} implies that the map $p$ is open. 
Then by the definition of quotient topology $\overline p$ is also an open map and hence a homeomorphism.
%Since $p$ is the inverse limit of open maps $p_i\colon\widetilde X_i/G_i\to X_i$, and the inverse system is \emph{limit exact} in 
%the sense of \cite{Puzio}, then \cite[Theorem 4]{Puzio} implies that $p$ is also an open map. Then by the definition of quotient topology $\overline p$ is also
%an open map and hence a homeomorphism.
\end{proof}

%------------------------------------------------------------------------------------------------------------------------------------------------------------------------------------
%   UNIVERSAL LIFTING SPACES
%------------------------------------------------------------------------------------------------------------------------------------------------------------------------------------
\ \\

\section{Universal lifting spaces}

We are now going to apply methods developed in previous sections to construct certain lifting spaces that in many aspect behave as the universal 
covering spaces. We will use freely terminology and constructions that are standard in shape theory and are described, for example, in \cite{Mardesic-Segal}.
Recall that all spaces are based (even if the base-points are normally omitted from the notation) and the maps are base-point preserving.

Throughout this section $X$ will be a path-connected metric compactum. We can choose an embedding $X\hookrightarrow M$ into an absolute retract (for metric spaces) $M$, 
and consider the inverse system $\mathbf{X}$ of open neighbourhoods of $X$ in $M$, ordered by the inclusion. Then $\mathbf{X}$ is an \emph{ANR expansion}
of $X$ in the sense of \cite{Mardesic-Segal}, and moreover $X=\varprojlim\mathbf{X}$. Observe that each space in the expansion is semi-locally simply-connected
and therefore admit all covering spaces, including the universal one. As in the previous section, we may define $\widetilde{\mathbf X}$ to be the 
associated inverse system of universal coverings, and let $\widetilde X:= \varprojlim\mathbf{\widetilde X}$. We are
going to prove that the definition of $\widetilde X$ is independent of the embedding $X\hookrightarrow M$. 

To this end we will first show that the above construction is functorial. Let $Y\hookrightarrow N$ be another metric compactum embedded into an absolute retract $N$, 
and let $f\colon X\to Y$ be any map. By the standard properties of the ANR expansions, $f$ induces a morphism of systems $\mathbf{f}\colon \mathbf{X}\to \mathbf{Y}$. 
Note  that the morphism $\mathbf{f}$ is in general not level-preserving, and commutes with bonding maps only up to homotopy. Nevertheless,
when composed with projections from $X$, it commutes exactly, so we may write $f=\varprojlim\mathbf{f}$. Every map between base-spaces admits a unique 
lifting to a map between respective universal coverings, so we have a morphism of systems $\mathbf{\tilde{f}}\colon \mathbf{\widetilde{X}}\to\mathbf{\widetilde{Y}}$ 
that is the unique lifting of $\mathbf{f}$. Here again, the morphism $\mathbf{\tilde{f}}$ commutes with the bonding maps only up to homotopy, but it commutes exactly
when composed with projections from $\widetilde X$, so we are justified to define $\tilde f:=\varprojlim\mathbf{\tilde{f}}$.
Clearly,  $\widetilde 1_X=1_{\widetilde X}$ and $\widetilde{g\circ f}=\widetilde g\circ\widetilde f$. 

\begin{proposition}
\label{functoriality}
The definition of $\widetilde X$ is independent of the choice of embedding for $X$.
The correspondence $X\mapsto \widetilde X$ and $f\mapsto \widetilde f$ determines a functor from
compact metric spaces to metric spaces. This functor is augmented in the sense that the following diagram
is commutative 
$$\xymatrix{
{\widetilde X} \ar[r]^{\tilde f} \ar[d]_{p_X} & {\widetilde Y}  \ar[d]^{p_Y}\\
X \ar[r]^{f} & Y
}$$
\end{proposition}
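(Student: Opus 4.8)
The plan is to verify the three assertions in the order augmentation, functor laws, independence of embedding, deducing the last purely formally from the first two. The single mechanism behind every step is the \emph{uniqueness of lifts to a simply-connected cover}: once the image of the base point is prescribed, a map into a space has at most one lift to its universal cover. This rigidity is what upgrades the construction $f\mapsto\tilde f$ --- built from a morphism $\mathbf f$ of expansions that is neither level-preserving nor strictly commutative --- into an honest functor. I would also note at the outset that, since a compact metric space has a countable cofinal basis of neighbourhoods, $\widetilde X$ is an inverse limit of countably many metrizable covers and is therefore itself metrizable, so the functor does land in metric spaces.

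I would first dispose of the augmentation square, which is the most routine part. By construction each lift $\widetilde{f_j}$ at finite level is a lift of the corresponding base map, so it commutes with the covering projections on the nose; writing $p_X$ and $p_Y$ as the inverse limits of those projections and using the exact equality $f=\varprojlim\mathbf f$, these level-wise identities assemble in the limit to $p_Y\circ\tilde f=f\circ p_X$. Nothing beyond compatibility of the covering relations with passage to the inverse limit is needed here.

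For the functor laws I would argue by uniqueness of lifts. The identity map $1_{\widetilde X}$ is a base-point-preserving lift of $1_X$, hence equals $\widetilde{1_X}$. For a composite $X\xrightarrow{f}Y\xrightarrow{g}Z$, the map $\tilde g\circ\tilde f$ is a base-point-preserving lift of $g\circ f$, and since the expansion morphism induced by $g\circ f$ coincides, in the pro-homotopy category of \cite{Mardesic-Segal}, with the composite of those induced by $f$ and $g$, uniqueness forces $\widetilde{g\circ f}=\tilde g\circ\tilde f$. The point demanding genuine care is that $\tilde f$ depends only on $f$, not on the auxiliary data --- the index function, the neighbourhood extensions, and the connecting homotopies --- entering $\mathbf f$ and $\mathbf{\tilde f}$; different admissible choices yield homotopic system morphisms, and the lifts of homotopic morphisms agree on each thread of $\widetilde X$, so the limit map is unchanged.

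Independence of the embedding then follows formally. Given two embeddings of $X$ producing covers $\widetilde X$ and $\widetilde X'$, I would feed the identity $1_X$ into the construction once as a map from the first model to the second, obtaining $\phi\colon\widetilde X\to\widetilde X'$, and once the other way, obtaining $\psi\colon\widetilde X'\to\widetilde X$. The composition law gives $\psi\circ\phi=\widetilde{1_X\circ 1_X}=\widetilde{1_X}=1_{\widetilde X}$ and symmetrically $\phi\circ\psi=1_{\widetilde X'}$, so $\phi$ and $\psi$ are mutually inverse homeomorphisms intertwining the two augmentations. I expect the real obstacle to lie not in this deduction but in the shape-theoretic bookkeeping feeding the functor laws: one must be sure that the lifts $\widetilde{f_j}$, which commute with the bonding maps only up to homotopy, still send threads to threads. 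This is precisely where simple-connectivity of the covers is used, together with the ability --- after passing to a cofinal subsystem on which the two relevant compositions are close --- to choose the connecting homotopies stationary on $X$, so that the loop obstructing path-homotopy of the lifted paths is constant and hence trivial.
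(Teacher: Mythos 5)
Your proposal is correct and follows essentially the same route as the paper: the lift $\tilde f$ is obtained by lifting an expansion morphism and exploiting uniqueness of base-point-preserving lifts to the universal covers, the functor laws come from that same uniqueness, and independence of the embedding is deduced by applying the construction to $1_X$ in both directions to obtain mutually inverse maps. Your additional care about metrizability of $\widetilde X$ and about the well-definedness of $\tilde f$ (choosing the connecting homotopies stationary on $X$ so the lifted data agree on all of $\widetilde X$, not just on the base-point path component) fills in details the paper passes over with ``follow directly from the definitions.''
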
 
\begin{proof}
Let $i\colon X\hookrightarrow M$ and $j\colon X\hookrightarrow N$ be embeddings of $X$ into absolute retracts $M$ and $N$. 
Then we have corresponding ANR expansions $\mathbf{X}_M$ and $\mathbf{X}_N$. Let $\mathbf{f}\colon \mathbf{X}_M\to\mathbf{X}_N$  and 
$\mathbf{g}\colon \mathbf{X}_N\to\mathbf{X}_M$ morphisms of systems induced by the identity map on $X$. By the above discussion
$\mathbf{f}$ and $\mathbf{g}$ induce maps $\tilde f,\tilde g\colon \widetilde X\to\widetilde X $ which are inverse one to the other, which implies that 
$\widetilde X$ is uniquely defined up to a natural homeomorphism. The functoriality and the relation between $f$ and $\tilde f$ follow directly from 
the definitions.
\end{proof}

It is a standard fact of shape theory (see for example \cite[I, 5.1]{Mardesic-Segal}) that for metric compacta every inverse system of polyhedra whose limit is $X$ is 
an expansion and that for every such expansion we can choose a cofinal sequence. Thus we obtain a more manageable description of $\widetilde X$: 

\begin{corollary}
If $X$ is a metric compactum then 
$\widetilde X=\varprojlim \widetilde X_i$ for any inverse sequence of polyhedra, converging to $X$.
\end{corollary}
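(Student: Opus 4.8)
The plan is to recognize that the construction $\mathbf{X}\mapsto\varprojlim\widetilde X_i$ depends only on $\mathbf{X}$ being an ANR expansion of $X$, and not on the particular way it was produced (as a system of open neighbourhoods of an embedded copy of $X$). Once this is established the Corollary follows at once: by the cited fact from shape theory \cite[I, 5.1]{Mardesic-Segal}, any inverse sequence of polyhedra converging to $X$ is itself an ANR expansion of $X$ (polyhedra being ANRs, and in particular semi-locally simply-connected), so applying the construction to such a sequence must reproduce $\widetilde X$.

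To make this precise I would argue exactly as in the proof of Proposition \ref{functoriality}, but replacing one of the neighbourhood expansions by an arbitrary polyhedral sequence. Let $\mathbf{P}=(P_i, w_{ij}, \NN)$ be an inverse sequence of polyhedra with $\varprojlim\mathbf{P}=X$, and let $\mathbf{X}$ be the neighbourhood expansion coming from some embedding $X\hookrightarrow M$. Both are ANR expansions of the same space, so by the universal property of expansions in the pro-category the identity map $\mathrm{id}_X$ induces morphisms of systems $\mathbf{f}\colon\mathbf{X}\to\mathbf{P}$ and $\mathbf{g}\colon\mathbf{P}\to\mathbf{X}$. As in Section 4, each of $\mathbf{f},\mathbf{g}$ lifts uniquely to a morphism of the associated systems of universal coverings — this uses only that the spaces involved are semi-locally simply-connected, via the covering-space lifting criterion \cite[Theorem 2.4.5]{Spanier} applied level-wise — and these lifts commute with the projections from the respective inverse limits. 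Passing to limits yields maps $\tilde f\colon\widetilde X\to\widetilde X_{\mathbf{P}}$ and $\tilde g\colon\widetilde X_{\mathbf{P}}\to\widetilde X$, where $\widetilde X_{\mathbf{P}}:=\varprojlim\widetilde P_i$.

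It then remains to check that $\tilde f$ and $\tilde g$ are mutually inverse. Both composites $\mathbf{g}\circ\mathbf{f}$ and $\mathbf{f}\circ\mathbf{g}$ are morphisms of expansions induced by $\mathrm{id}_X$, so their unique lifts are the identity morphisms; taking limits, $\tilde g\circ\tilde f$ and $\tilde f\circ\tilde g$ are the identity maps on $\widetilde X$ and on $\widetilde X_{\mathbf{P}}$ respectively, exactly as in Proposition \ref{functoriality}. Hence $\tilde f$ is a homeomorphism with inverse $\tilde g$, and $\widetilde X_{\mathbf{P}}\cong\widetilde X$, which is the assertion of the Corollary.

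The main obstacle is bookkeeping rather than substance. First, the neighbourhood expansion $\mathbf{X}$ is indexed by a directed set rather than by $\NN$; since inverse limits are unchanged under restriction to a cofinal subsystem — and the same holds for the associated universal-cover systems — one may pass to a cofinal sequence in $\mathbf{X}$ before comparing it with $\mathbf{P}$, as permitted by \cite[I, 5.1]{Mardesic-Segal}. Second, one must be careful that the morphisms $\mathbf{f},\mathbf{g}$ and their lifts are defined only up to homotopy and commute with bonding maps only up to homotopy; the point, already exploited in Proposition \ref{functoriality}, is that they commute \emph{exactly} once composed with the projections from $X$ (respectively from $\widetilde X$), which is precisely what is needed both for the limit maps $\tilde f,\tilde g$ to be well-defined and for the two composites to collapse to the identities.
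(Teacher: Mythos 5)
Your proposal is correct and takes essentially the same route as the paper: the paper, too, derives the corollary from the standard shape-theoretic fact \cite[I, 5.1]{Mardesic-Segal} that any polyhedral inverse sequence converging to a metric compactum is an expansion (and that expansions admit cofinal sequences), combined with the comparison-of-expansions argument underlying Proposition \ref{functoriality}, which shows that two expansions of the same space yield mutually inverse maps between the limits of the associated universal-cover systems. Your write-up simply makes explicit (including the cofinality and homotopy-commutativity bookkeeping) what the paper leaves implicit in the sentence preceding the corollary.
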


Clearly, if $X$ is a compact polyhedron, then $\widetilde X$ is just the universal covering space of $X$. For a less trivial example, if $W$ 
denotes the Warsaw circle, then it is easy to see that $\widetilde W$ is the \emph{'Warsaw line'}, which we may describe as the real line in which every
segment $[n,n+1)$ is replaced by the topologist's sine curve. As another illustration, if $H$ is the Hawaiian earring, then $\widetilde H$ is the inverse limit of trees
that are universal covering spaces for finite wedges of circles.

In order to explain in what sense is $\widetilde X$ universal with respect to the  lifting spaces that are inverse limits of coverings, 
let us consider an expansion $\mathbf{X}=(X_i,u_{ij},\mathcal{I})$ of a metric compactum $X$. 
Then any inverse system of coverings over $\mathbf{X}$ uniquely corresponds to a choice of a coherent thread $\mathbf{G}=(G_i,\mathcal{I})$, i.e.
is of the form $(\widetilde X_i/G_i,\bar u_{ij},\mathcal{I})$. 
By the lifting properties of covering spaces, for every $i\le j$ in $\mathcal{I}$ there are unique maps $q_i,q_j$ for which the following diagram 
commutes
$$\xymatrix{
          & \widetilde X_j \ar[rr]^{\tilde u_{ij}} \ar[dl]_{q_j} \ar[ddl]^(.3){\tilde q_j} & & \widetilde X_i \ar[dl]_-{q_i} \ar[ddl]^(.3){\tilde q_i}\\
 \widetilde X_j/G_j \ar[rr]_{\bar u_{ij}} \ar[d]_{p_j} & &  \widetilde X_i/G_i  \ar[d]_{p_i} \\
  X_j \ar[rr]_{u_{ij}} & & X_i
}$$
Thus we obtain a commutative diagram of inverse systems and the corresponding limits
$$\xymatrix{
\mathbf{\widetilde X}_{\mathbf G} \ar[d]_{\mathbf p} & \mathbf{\widetilde X} \ar[l]_{\mathbf q} \ar[ld]^-{\mathbf{\tilde q}} & & 
{\widetilde X}_{\mathbf G} \ar[d]_{p} & {\widetilde X} \ar[l]_{q} \ar[ld]^-{\tilde q}\\
\mathbf X & & & X}
$$

\begin{theorem}
\label{thm dense image}
Let $\tilde q\colon \widetilde X\to X$ be the universal lifting space for $X$ and let $p\colon \overline X\to X$ be a lifting space 
obtained as a limit of an inverse system of coverings over some expansion of $X$.
Then $\overline X=\widetilde X_\mathbf{G}$ for some (essentially unique) coherent thread $\mathbf{G}$, and there is a unique (base-point preserving) 
lifting space projection $q\colon \widetilde X\to \widetilde X_\mathbf{G}$ for which $p\circ q=\widetilde q$. 

Furthermore, let $\pi_1(X_i)^\St$ and $(\pi_1(X_i)/G_i)^\St$ be the stable images of the systems of groups and cosets
induced by the inverse systems for $X$ and $\overline X$. Then $q(\widetilde X)$ is dense in $\widetilde X_\mathbf{G}$ if and only if  
the induced map 
$$\pi_1(X_i)^\St\longrightarrow(\pi_1(X_i)/G_i)^\St$$ 
is surjective for every $i\in \mathcal{I}$.
\end{theorem}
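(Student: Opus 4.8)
The plan is to handle the three assertions in turn, the third being the substantial one.

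\emph{Identification and the map $q$.} First I would note that, since each $X_i$ is semi-locally simply-connected, the covering of $X_i$ appearing in the given system corresponds (once base-points are fixed) to a unique subgroup $G_i\le\pi_1(X_i)$, and that the requirement that the bonding maps be maps of coverings is exactly the condition $(u_{ij})_\sharp(G_j)\subseteq G_i$; hence the system is $(\widetilde X_i/G_i,\bar u_{ij})$ for an essentially unique coherent thread $\mathbf G$, so $\overline X=\widetilde X_{\mathbf G}$. At each level $q_i\colon\widetilde X_i\to\widetilde X_i/G_i$ is the canonical quotient covering, which by the lifting theorem for covering spaces is the unique base-point-preserving map over $X_i$ out of the universal cover; these commute with the bonding maps, so they assemble to a level morphism and $q:=\varprojlim q_i$ satisfies $p\circ q=\tilde q$. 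By Proposition~\ref{propliftings} an inverse limit of lifting projections is a lifting projection (the $q_i$ are coverings, hence lifting projections), so $q$ is one, and uniqueness follows from unique path-lifting together with the level-wise uniqueness of lifts between coverings.

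\emph{Two reductions.} For the density statement I would isolate a purely topological fact: if $Z=\varprojlim Z_i$ carries the limit topology and $S\subseteq Z$, then $S$ is dense in $Z$ if and only if $\pi_i(S)$ is dense in $\pi_i(Z)=Z_i^{\St}$ for every $i$ (immediate from the description of basic open sets as $\pi_i^{-1}(V_i)$, using directedness to absorb finitely many constraints into one index). Applying this to $Z=\widetilde X_{\mathbf G}$, $S=q(\widetilde X)$, and using $\bar u_i\circ q=q_i\circ\tilde u_i$ with $\tilde u_i(\widetilde X)=\widetilde X_i^{\St}$, I get that $q(\widetilde X)$ is dense in $\widetilde X_{\mathbf G}$ iff $q_i(\widetilde X_i^{\St})$ is dense in $(\widetilde X_i/G_i)^{\St}$ for each $i$. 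Applying the same fact to the base-point fibre, where by Proposition~\ref{prop fibre limit} the factors $\pi_1(X_j)/G_j$ are \emph{discrete}, density becomes equality, and one finds that $q(\widetilde X)\cap F$ (the image of the fibre $\tilde q^{-1}(x_0)$) is dense in $F=\varprojlim\pi_1(X_i)/G_i$ iff for every $i$ the image of $\pi_1(X_i)^{\St}$ equals $(\pi_1(X_i)/G_i)^{\St}$, which is precisely the stated surjectivity (one direction also being Lemma~\ref{lem dense image}). It therefore suffices to prove the bridge: $q(\widetilde X)$ is dense in $\widetilde X_{\mathbf G}$ if and only if $q(\widetilde X)\cap F$ is dense in $F$.

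\emph{The two directions of the bridge.} From fibre-density to global density I would use a transport argument needing only path-connectedness of $X$: given $\bar y$ and a basic neighbourhood constraining the $i$-th coordinate to a sheet $V_i\ni\bar y_i$, pick a path $\alpha$ in $X$ from $x_0$ to $p(\bar y)$, carry $\bar y$ into $F$ by the fibre homeomorphism $f_{\overline\alpha}$ of Proposition~\ref{propliftings}, use fibre-density to find $\tilde w\in\tilde q^{-1}(x_0)$ whose $i$-th coordinate matches, and transport back along $\alpha$; naturality of the transport with respect to $\bar u_i$ and $q_i$, and the fact that forward-then-backward transport along $\alpha$ is the identity on the fibre, forces the $i$-th coordinate of $q(f_\alpha(\tilde w))$ to return exactly to $\bar y_i\in V_i$. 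For the converse I would work level-wise, exploiting that each $X_i$, being open in an absolute retract, \emph{is} locally path-connected: from global density I obtain $q_i(\widetilde X_i^{\St})$ dense in $(\widetilde X_i/G_i)^{\St}$, and for a coset $c\in(\pi_1(X_i)/G_i)^{\St}$ I would take a path-connected evenly-covered $U\ni x_i$ with a sheet $V\ni c$, produce by density a point $q_i(s)\in V$ lying over some $b\in U$ with $s\in\widetilde X_i^{\St}$, and transport back to the fibre over $x_i$.

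\emph{Main obstacle.} The hardest step is this converse bridge, and the crux is that the transport returning a density-produced point to the fibre over $x_i$ must land in the \emph{algebraic} stable image $\pi_1(X_i)^{\St}$. The tension is that $\widetilde X_i^{\St}$ is invariant only under transport along paths lying in the limit $X$ (equivalently in every $X_j$), whereas the sheet-transport that hits the prescribed coset $c$ runs through $U\subseteq X_i$ and may leave $X$; the discrepancy between the two transports is an a priori uncontrolled monodromy element of $\pi_1(X_i)$. To resolve it I would exploit that the projection of $\widetilde X_i^{\St}$ to $X_i$ is exactly $X\subseteq X_i$ (so density-produced base-points already lie in $X$) and pass to the neighbourhood expansion, where the bonding maps are inclusions, $\tilde u_i$ is injective, and $\widetilde X_i^{\St}\cap\tilde q_i^{-1}(x_i)$ is canonically identified with $\pi_1(X_i)^{\St}$; showing that on the stable image the sheet-transport and the in-$X$ transport agree modulo the required coset is the technical heart of the argument.
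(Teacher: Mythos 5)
Your handling of the first assertion and of the ``if'' direction is correct and is essentially the paper's own argument. The paper, too, identifies $\overline X$ with $\widetilde X_{\mathbf G}$ via the coherent-thread correspondence and assembles $q$ from the level-wise unique lifts; and for ``surjectivity $\Rightarrow$ density'' it likewise picks a path $\alpha$ in $X$ from the base-point to $p(\bar x)$, uses the resulting $\pi_1(X_i)$-equivariant bijections between $\pi_1(X_i)/G_i$ and the fibres of $p_i$ (compatible with the bonding maps, hence restricting to bijections of stable images), and then uses directedness to absorb a basic product neighbourhood into a single index $j\ge\mathcal F$. Your two reductions plus the fibre-to-global transport are a cleaner packaging of exactly this computation, including the same standing convention (stable image $=$ image of the limit projection) that the paper adopts in Section 4.

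The gap is where you say it is: the converse bridge (``global density $\Rightarrow$ fibre density/surjectivity'') is reduced to the claim that sheet-transport and in-$X$ transport agree modulo the required coset, and that claim is never proved, so as a proof your proposal is incomplete. You should know, however, that the paper does not close this gap either. Its entire ``only if'' direction is one sentence asserting that a suitable $\bar x$ has the \emph{open neighbourhood} $\{\bar u_i(\bar x)\}\times\prod_{j\ne i}\widetilde X_j/G_j$ disjoint from $q(\widetilde X)$; but this set is not open (a single point of a covering space is not open in it), and disjointness from a non-open set says nothing about density. Replacing it by a genuine basic open set $\bar u_j^{-1}(V_j)$, with $V_j$ a sheet over an evenly covered neighbourhood of $x_j$, re-admits precisely the points you worry about: images $q(\tilde y)$ lying over points $b\ne x_j$ of $X$, whose return to the fibre inside the sheet differs from the return along a path in $X$ by an uncontrolled monodromy element of $\pi_1(X_j)$. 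The paper resolves this issue only under local path-connectedness (the converse of Proposition~\ref{prop dense leaf}, which needs the openness of the projection from Proposition~\ref{prop open map}), and Section 5 assumes only that $X$ is a path-connected compactum. So your proposal and the paper's proof are incomplete at the same spot; your diagnosis of the obstruction is in fact more precise than the paper's treatment of it. (One can even check that the implication genuinely needs path-connectedness of $X$, which the paper's one-line argument never uses: for $X$ a dyadic solenoid with its circle expansion and the constant thread $G_i=3\ZZ$, the image of $q$ is dense while the stable-image map $0\to\ZZ/3$ is not surjective. For path-connected continua the statement survives, but for an external reason --- pro-$\pi_1$ of a path-connected continuum satisfies the Mittag-Leffler condition, so by the argument of Corollary~\ref{cor dense image}(1) the surjectivity condition holds automatically --- not because of the argument given in the paper.)
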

\begin{proof}
The first assertion follows from the above discussion, so it remains to prove the characterization of density. Assume that the map between stable images
is surjective and consider an element $\bar x=(\tilde x_iG_i)\in\overline X$ (with $\tilde x_i\in\widetilde X_i$ as representatives of the orbits).
By choosing a path $\alpha$ in $X$ connecting the base-point to $p(\bar x)$ we obtain a coherent sequence of paths $(\alpha_i\colon I\to X_i)$, where $\alpha_i$ 
connects the base point of $X_i$ to $u_i(p(\bar x))$. The unique path-lifting along $\alpha_i$ yields a commutative diagram 
$$\xymatrix{
\pi_1(X_i) \ar@{^(->}[r] \ar[d] &  \widetilde X_i \ar[d]^{q_i} \\
\pi_1(X_i)/G_i \ar@{^(->}[r] \ar[d] &  \widetilde X_i/G_i \ar[d]^{p_i} \\
{\ \ast\ } \ar@{^(->}[r]  &   X_i  
}$$
where the horizontal maps are $\pi_1(X_i)$-equivariant bijections onto the fibres over $u_i(p(\bar x))$ of the lifting spaces $p_i$ and $\tilde q_i$.
By construction, these diagrams commute with the bonding maps of the respective inverse systems so they form a commutative diagram of inverse systems
$$\xymatrix{
(\pi_1(X_i),(\tilde u_{ij})_\sharp,\mathcal{I}) \ar[r] \ar[d] &  (\widetilde X_i,\tilde u_{ij},\mathcal{I}) \ar[d]^{q_i} \\
(\pi_1(X_i)/G_i,(\bar u_{ij})_\sharp,\mathcal{I}) \ar[r] \ar[d] &  (\widetilde X_i/G_i,\bar u_{ij},\mathcal{I}) \ar[d]^{p_i} \\
{\ \ast\ } \ar[r]  &   (X_i,u_{ij},\mathcal{I})  
}$$
As a consequence, we obtain bijections $\pi_1(X_i)^\St\to \widetilde X_i^\St\cap \tilde q_i^{-1}(u_i(p(\bar x)))$ and 
$(\pi_1(X_i)/G_i)^\St\to (\widetilde X_i/G_i)^\St\cap \tilde p_i^{-1}(u_i(p(\bar x)))$.

A local basis of neighbourhoods at $(\tilde x_iG_i)$ is given by the sets of the form 
$U_\mathcal{F}:=\prod_{i\in \mathcal{F}} \{x_iG_i\}\times \prod_{i\notin \mathcal{F}} {\widetilde X_i/G_i}$, where $\mathcal{F}$ is any finite subset of $\mathcal{I}$. 
Let $j\in\mathcal{I}$ be bigger than all elements of $\mathcal{F}$. Since $x_jG_j\in(\widetilde X_j/G_j)^\St$, it corresponds to 
to some $g_jG_j$, where by our assumption $g_j\in\pi_1(X_j)^\St$. It follows that there is an $\tilde x\in \widetilde X$ such that 
$\tilde u_j(\tilde x)G_j=\tilde x_jG_j$, and consequently $\tilde u_i(\tilde x)G_i=\tilde x_iG_i$ for all $i\in\mathcal{F}$. 
We have thus proved that $q(\widetilde X)$ intersects every open set in $\widetilde X_\mathbf{G}$, hence the image of $q$ is dense in $\widetilde X_\mathbf{G}$.

Conversely, if for some $i\in\mathcal{I}$ the map $\pi_1(X_i)^\St\longrightarrow(\pi_1(X_i)/G_i)^\St$ is not surjective, then we may use the previously 
described correspondence between $\pi_1(X_i)$ and $\pi_1(X_i)/G_i$ with the fibres of $q_i$ and $p_i$ to find an element $\bar x\in\widetilde X_\mathbf{G}$, 
such that $q(\widetilde X)$ does not intersect its open neighbourhood $\{\bar u_i(\bar x)\}\times \prod_{j\ne i} {\widetilde X_j/G_j}$.
\end{proof}

In various practical situations it may be hard to verify whether the image of $\widetilde X$ is dense in $\overline X$ directly from the theorem. 
The following corollary provides several sufficient conditions. 

\begin{corollary}
\label{cor dense image}
Any of the following conditions imply that $q(\widetilde X)$ is dense in $\widetilde X_\mathbf{G}$.
\begin{enumerate}
\item The inverse system $(\pi_1(X_i), (u_{ij})_\sharp,\mathcal{I})$ has the Mittag-Leffler property
 (this holds in particular, if all bonding homomorphisms in the system are surjective) and $\mathbf{G}$ is arbitrary.
\item $G_i$ is a subgroup of $\pi_1(X_i)^\St$ for every $i\in\mathcal{I}$.
\item $G_i\cdot \pi_1(X_i)^\St=\pi_1(X_i)$ for every $i\in\mathcal{I}$.
\item There exists a cofinal subsequence $\mathcal{C}\subseteq\mathcal{I}$, and all groups in the thread $\mathbf{G}$ are finite.  
\end{enumerate}
\end{corollary}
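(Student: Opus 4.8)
The plan is to reduce everything to the criterion of Theorem \ref{thm dense image}: since each $q_j\colon\pi_1(X_j)\to\pi_1(X_j)/G_j$ is surjective and commutes with the bonding maps, density of $q(\widetilde X)$ is equivalent to surjectivity of the induced map $\pi_1(X_i)^\St\to(\pi_1(X_i)/G_i)^\St$ for every $i\in\mathcal{I}$. Writing $v_{ij}:=(u_{ij})_\sharp$ and letting $q_i$ denote the coset projection, one has $\pi_1(X_i)^\St=\bigcap_{j\ge i}v_{ij}(\pi_1(X_j))$ and, using surjectivity of $q_j$, $(\pi_1(X_i)/G_i)^\St=\bigcap_{j\ge i}q_i\big(v_{ij}(\pi_1(X_j))\big)$. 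Thus in every case I must show that a coset lying in all the sets $q_i\big(v_{ij}(\pi_1(X_j))\big)$ admits a representative in the subgroup $\bigcap_{j\ge i}v_{ij}(\pi_1(X_j))$; equivalently, I must control the failure of $q_i$ to commute with this descending intersection. I would treat the four conditions separately, since each tames this failure by a different mechanism.

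For (1), the Mittag-Leffler property says that for each $i$ the descending family $v_{ij}(\pi_1(X_j))$ stabilizes at some $j_0\ge i$, so that $\pi_1(X_i)^\St=v_{ij_0}(\pi_1(X_{j_0}))$ is a single image. Applying $q_i$ and noting that $q_i\big(v_{ij}(\pi_1(X_j))\big)$ is then likewise constant for $j\ge j_0$, the intersection defining $(\pi_1(X_i)/G_i)^\St$ collapses to $q_i(\pi_1(X_i)^\St)$, yielding surjectivity for arbitrary $\mathbf{G}$. For (2), observe that $G_i\subseteq\pi_1(X_i)^\St\subseteq v_{ij}(\pi_1(X_j))$, so each subgroup $v_{ij}(\pi_1(X_j))$ contains $G_i$ and is a union of cosets; hence $q_i^{-1}q_i\big(v_{ij}(\pi_1(X_j))\big)=v_{ij}(\pi_1(X_j))$, and because all these subgroups contain $G_i$ the intersection does commute with $q_i$, giving $(\pi_1(X_i)/G_i)^\St=q_i(\pi_1(X_i)^\St)$ (normality of $G_i$ is not needed here). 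For (3), the equality $G_i\cdot\pi_1(X_i)^\St=\pi_1(X_i)$ forces, after passing to inverses (both factors being subgroups), $\pi_1(X_i)^\St\cdot G_i=\pi_1(X_i)$ as well, so every coset of $G_i$ already meets $\pi_1(X_i)^\St$; thus $q_i$ carries $\pi_1(X_i)^\St$ onto the entire set $\pi_1(X_i)/G_i$, a fortiori onto its stable image.

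Condition (4) is the one I expect to require the most care, and is where the cofinal sequence enters. Fix $i$ and a coset $c\in(\pi_1(X_i)/G_i)^\St$, viewed as a subset of $\pi_1(X_i)$; since $G_i$ is finite, $c$ has exactly $|G_i|$ elements. The sets $S_j:=c\cap v_{ij}(\pi_1(X_j))$ are nonempty (as $c$ lies in the stable image of cosets) and form a descending family in $j$. Selecting a cofinal sequence $j_1\le j_2\le\cdots$ in $\mathcal{I}$ turns $(S_{j_n})$ into a nested sequence of nonempty finite sets, whose intersection is therefore nonempty; by cofinality this intersection equals $c\cap\pi_1(X_i)^\St$, and any element of it is a representative of $c$ in $\pi_1(X_i)^\St$, establishing surjectivity.

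The only genuinely delicate points are verifying that the stable-image reductions are compatible with $q_i$ as claimed (the interchange of $q_i$ with the descending intersection in cases (1) and (2)), and checking in (3) that the inverse-passing step legitimately converts the left product $G_i\cdot\pi_1(X_i)^\St$ into the right product $\pi_1(X_i)^\St\cdot G_i$. Both are routine once the subgroup bookkeeping is in place, so the substantive content lies in identifying, for each hypothesis, why a coset in the stable image can be represented inside $\pi_1(X_i)^\St$.
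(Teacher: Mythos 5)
Your proof is correct and takes essentially the same route as the paper's: both reduce to the surjectivity criterion of Theorem \ref{thm dense image} for the maps $\pi_1(X_i)^\St\to(\pi_1(X_i)/G_i)^\St$ and verify it case by case via Mittag--Leffler stabilization in (1), the fact that the images $(u_{ij})_\sharp(\pi_1(X_j))$ contain $G_i$ (hence are saturated) in (2), the product decomposition in (3), and finiteness of the cosets plus cofinality in (4). If anything you are more careful than the paper in case (3), where you explicitly justify passing from $G_i\cdot\pi_1(X_i)^\St=\pi_1(X_i)$ to $\pi_1(X_i)^\St\cdot G_i=\pi_1(X_i)$ (so that every coset of $G_i$ genuinely meets the stable image), a left/right coset subtlety that the paper's one-line argument leaves implicit.
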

\begin{proof}
\begin{enumerate}
\item 
Recall that $(\pi_1(X_i), (u_{ij})_\sharp,\mathcal{I})$ satisfy the Mittag-Leffler condition if  the stable images are achieved at some
finite stage, i.e. for every $i\in\mathcal{I}$ there is a $j\ge i$ such that $\pi_1(X_i)^\St= (u_{ik})_\sharp(\pi_1(X_k))$ for every $k\ge j$. 
Given an element $g_iG_i\in\pi_1(X_i)^\St$  there exists  $g_j\in \pi_1(X_j)$ such that 
$g_iG_i=(u_{ij})_\sharp(g_jG_j)=(u_{ij})_\sharp(g_j)G_i$, so $g_iG_i$ is the image of $(u_{ij})_\sharp(g_j)$, 
which is by the Mittag-Leffler condition an element of $\pi_1(X_i)^\St$.
\item 
Assume that $g_iG_i\in (\pi_1(X_i)/G_i)^\St$, so that for every $j\ge i$ there exists some $g_j\in\pi_1(X_j)$ satisfying 
$g_iG_i=(u_{ij})_\sharp(g_jG_j)=(u_{ij})_\sharp(g_j)G_i$. It follows that $(u_{ij})_\sharp(g_j)=g_ig$ for some $g\in G_i$. 
Since $G_i\subseteq \pi_1(X_i)^\St$ there exists some $h\in \pi_1(X_j)$ such that $(u_{ij})_\sharp(h)=g$, and so 
$g_i=(u_{ij})_\sharp(g_jh^{-1})$, which shows that $g_i$ is in the image of $(u_{ij})_\sharp$ for every $j\ge i$, therefore it is in 
the stable image.
\item
The condition implies that every element of $\pi_1(X_i)/G_i$, and hence every element of $(\pi_1(X_i)/G_i)^\St$, is in the image of $\pi_1(X_i)^\St$.
\item 
If $g_iG_i\in (\pi_1(X_i)/G_i)^\St$, then for every $j\in\mathcal{C}, j\ge i$ there exists $g_j\in\pi_1(X_j)$ such that $(u_{ij})_\sharp(g_jG_j)=g_iG_i$. 
This means that $(u_{ij})(g_j)$ is one of the finitely many elements of the coset $x_iG_i$ and so at least one of them must appear infinitely many 
times as value of $(u_{ij})_\sharp$ for $j\in\mathcal{C}, j\ge i$. By the cofinality of $\mathcal{C}$ this element must be in $\pi_1(X_i)^\St$.
\end{enumerate}
\end{proof}

\begin{example}
If $X$ is a polyhedron, then we may always  take the trivial expansion so the bonding maps on the associated system of fundamental groups are identity homomorphisms. 
If $p\colon\overline X\to X$ is a limit of an inverse system coverings over $X$ then by Theorem \ref{thm dense image} there exists a unique lifting projection 
$q\colon \widetilde X\to \overline X$ such that $p\circ q=\tilde q\colon\widetilde X\to X$, the standard  universal covering space projection. 
Moreover, by Corollary \ref{cor dense image} the image $q(\widetilde X)$ is dense in $\overline X$.
\end{example}

Observe that in principle the density of $q(\widetilde X)$ in $\overline X$ depends on the properties of the expansion used in the construction of $\overline X$. 
However, we have the following result that allows to avoid this objection. 
Our argument is based on the fact that the limit of the inverse system of fundamental groups associated to an expansion of $X$ actually depends only on the space 
itself: the \emph{shape fundamental group} is defined as $\check{\pi}_1(X):=\varprojlim (\pi_1(X_i),(u_{ij})_\sharp, \mathcal{I})$, 
where $(X_i,u_{ij},\mathcal{I})$ is any expansion of $X$ (cf. \cite[II, 3.3]{Mardesic-Segal}). 

\begin{lemma}
\label{lem M-L expansions}
Let $(X_i,u_{ij},\mathcal{I})$ and $(X'_i,u'_{ij},\mathcal{I'})$ be two polyhedral expansions for $X$, and assume that the bonding homomorphisms in the associated system
of fundamental groups $(\pi_1(X_i),(u_{ij})_\sharp,\mathcal{I})$ are surjective. Then the system $(\pi_1(X'_i),(u'_{ij})_\sharp,\mathcal{I'})$ satisfies the 
Mittag-Leffler condition.
\end{lemma}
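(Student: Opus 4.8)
The plan is to deduce the Mittag-Leffler property of $(\pi_1(X'_i),(u'_{ij})_\sharp,\mathcal{I}')$ purely from the fact that it is pro-isomorphic to $(\pi_1(X_i),(u_{ij})_\sharp,\mathcal{I})$, whose bonding homomorphisms are surjective and which therefore trivially satisfies the Mittag-Leffler condition. Since both inverse systems are polyhedral expansions of the same compactum $X$, they are isomorphic in the homotopy pro-category, and applying the fundamental group functor shows that the associated fundamental pro-groups are isomorphic in the pro-category of groups $\mathbf{Grp}$ (see \cite{Mardesic-Segal}). Because $X$ is a metric compactum, by the standard facts cited above we may replace both expansions by cofinal inverse sequences; passing to a cofinal subsequence preserves surjectivity of the bonding maps in the first system and both preserves and reflects the Mittag-Leffler property in the second. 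Thus the whole problem reduces to the following purely algebraic statement about towers: \emph{if a tower of groups $\mathbf{B}=(B_n,b_{mn})$ is pro-isomorphic to a tower $\mathbf{A}=(A_n,a_{mn})$ with surjective bonding homomorphisms, then $\mathbf{B}$ satisfies the Mittag-Leffler condition.}

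To prove this I would use the standard representation of a pro-isomorphism of towers by an interlacing zig-zag. After a further passage to cofinal subsequences there are homomorphisms $f_n\colon A_{n+1}\to B_n$ and $g_n\colon B_n\to A_n$ such that $g_n\circ f_n=a_{n,n+1}$ and $f_n\circ g_{n+1}=b_{n,n+1}$ for all $n$. Telescoping these relations, by repeatedly replacing the interior composites $g_j\circ f_j$ with $a_{j,j+1}$, yields for every $k>n$ the factorization
$$b_{nk}=f_n\circ a_{n+1,k}\circ g_k\colon B_k\longrightarrow A_k\longrightarrow A_{n+1}\longrightarrow B_n.$$

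The heart of the argument is then to pin down the image $\im b_{nk}=f_n\bigl(a_{n+1,k}(g_k(B_k))\bigr)$. One has no direct control over $g_k(B_k)$, but the interlacing relations sandwich it: from $a_{k,k+1}=g_k\circ f_k$ one gets $\im a_{k,k+1}\subseteq g_k(B_k)\subseteq A_k$, and applying the homomorphism $a_{n+1,k}$ gives
$$\im a_{n+1,k+1}\subseteq a_{n+1,k}\bigl(g_k(B_k)\bigr)\subseteq \im a_{n+1,k}.$$
Since the bonding maps of $\mathbf{A}$ are surjective, $\im a_{n+1,k}=\im a_{n+1,k+1}=A_{n+1}$, so the sandwich forces $a_{n+1,k}(g_k(B_k))=A_{n+1}$ and hence $\im b_{nk}=f_n(A_{n+1})$ for every $k>n$. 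The images $\im b_{nk}$ are therefore constant in $k$, which is precisely the Mittag-Leffler condition for $\mathbf{B}$. (Should one want the invariance statement for a general tower $\mathbf{A}$ satisfying only Mittag-Leffler, the same sandwich shows $a_{n+1,k}(g_k(B_k))$ equals the stable image $A_{n+1}^{\St}$ once $k$ is large, giving $\im b_{nk}=f_n(A_{n+1}^{\St})$ eventually.)

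I expect the delicate points to be the two structural reductions rather than the computation: first, that two polyhedral expansions of the same compactum induce pro-isomorphic fundamental pro-groups, and second, that such a pro-isomorphism of towers can be realized, after reindexing by cofinal subsequences, by interlacing maps $f_n,g_n$ obeying the stated commutations. Both are standard in shape theory \cite{Mardesic-Segal}, so the main obstacle is essentially bookkeeping: keeping the index shifts in the telescoping identity and in the sandwich consistent, and verifying that the passage to cofinal subsequences neither destroys the surjectivity of the bonding maps of $\mathbf{A}$ nor alters the Mittag-Leffler status of $\mathbf{B}$.
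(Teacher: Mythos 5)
Your proof is correct, but it follows a genuinely different route from the paper's. The paper never leaves the expansion formalism: given $i'\in\mathcal{I}'$ it uses the factorization property of expansions to produce maps $v\colon X_i\to X_{i'}$ and $w\colon X_{j'}\to X_i$ commuting (up to homotopy) with the projections, notes that surjectivity of the bonding maps makes the limit projection $(u_i)_\sharp\colon\check{\pi}_1(X)=\varprojlim\pi_1(X_i)\to\pi_1(X_i)$ surjective, hence $w_\sharp$ surjective, and then reads off the chain $\pi_1(X'_{i'})^{\St}=(u'_{i'})_\sharp(\check{\pi}_1(X))=v_\sharp(\pi_1(X_i))=(u'_{i'j'})_\sharp(\pi_1(X_{j'}))$, i.e.\ the stable image is realized by a single bonding map, which is the Mittag-Leffler condition. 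You instead work entirely at the level of pro-groups: pro-isomorphism invariance of the Mittag-Leffler property, proved by the interlacing zig-zag $g_nf_n=a_{n,n+1}$, $f_ng_{n+1}=b_{n,n+1}$, the telescoping identity $b_{nk}=f_n\circ a_{n+1,k}\circ g_k$, and the sandwich $\im a_{n+1,k+1}\subseteq a_{n+1,k}(g_k(B_k))\subseteq\im a_{n+1,k}$ --- all of which check out. What each approach buys: the paper's argument yields the extra identification of the stable images with images of the shape group, which is exactly the ``stably surjective'' information reused in Corollary \ref{cor stab surjective}; your argument is more robust and more general (it proves that Mittag-Leffler is a pro-invariant, and never touches the inverse limit group, so it sidesteps the delicate point that surjective bonding maps guarantee surjective limit projections only for towers). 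Note, though, that your reduction to cofinal inverse sequences is not an optional convenience but essential (the interlacing representation of a pro-isomorphism is a tower-level statement), and the paper's proof implicitly makes the same reduction when it asserts surjectivity of $(u_i)_\sharp$ from the limit; both proofs therefore lean on the same standard facts from \cite{Mardesic-Segal} about expansions of metric compacta.
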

\begin{proof}
By the properties of expansions, for a given $i'\in\mathcal{I'}$ we may find an $i\in\mathcal{I}$ and a map $v\colon X_i\to X_{i'}$ such that $v\circ u_i=u'_{i'}$.
Similarly, we may find some $j'\in\mathcal{I'}$ and a map $w\colon X_{j'}\to X_i$ satisfying $w\circ u'_{j'}=u_i$. 
$$\xymatrix{
X \ar@{=}[d] \ar[rr]^{u_i} & & X_i \ar[rd]^v \\
X \ar[r]_{u'_{j'}} & X_{j'} \ar[rr]_{u'_{i'j'}} \ar[ru]^w & & X_{i'}
}$$
Then, by applying fundamental groups we obtain the following diagram
$$\xymatrix{
\check{\pi}_1(X)=\varprojlim \pi_1(X_i) \ar@{=}[d] \ar@{->>}[rr]^{(u_i)_\sharp} & & \pi_1(X_i) \ar[rd]^{v_\sharp} \\
\check{\pi}_1(X)=\varprojlim \pi_1(X'_i) \ar[r]_-{(u'_{j'})_\sharp} & \pi_1(X_{j'}) \ar[rr]_{(u'_{i'j'})_\sharp} \ar[ru]^{w_\sharp} & & \pi_1(X_{i'})
}$$
Since $(u_i)_\sharp$ is surjective, so must be $w_\sharp$, thus we have
$$\pi_1(X'_{i'})^\St=(u_{i'})_\sharp(\check{\pi}_1(X))=v_\sharp(\pi_1(X_i))=(u'_{i'j'})_\sharp(\pi_1(X_{j'}))$$ 
which proves that the stable image coincides with the image of the bonding map $(u'_{i'j'})_\sharp$, as required by the Mittag-Leffler condition.
\end{proof}

\begin{example}
We have already mentioned that the universal lifting space of the Warsaw circle $W$ is the Warsaw line $\widetilde W$. Since we may obtain $W$ as a limit
of shrinking annuli, where the induced homomorphisms on the fundamental group are identities, 
the above results imply that $\widetilde W$ is mapped densely into every inverse limit of coverings over any expansion of $W$. This applies in particular to all
Warsawonoids that we described in Example \ref{ex Warsawonoid}. 
\end{example}

In general one cannot expect to find an expansion of $X$ for which the bonding homomorphisms in the induced system of fundamental groups are surjective. 
Nevertheless, this important property can be always achieved for expansions of locally path-connected spaces.

\begin{lemma}
\label{lem LPC expansion}
Every locally path-connected space $X$ admits an expansion $(X_i,u_{ij},\mathcal{I})$ such that the induced homomorphisms $(u_i)_\sharp\colon\pi_1(X)\to \pi_1(X_i)$ 
are surjective for all $i\in\mathcal{I}$.
\end{lemma}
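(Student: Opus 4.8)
The plan is to build the required expansion from the canonical (\v{C}ech) system of nerves of open covers, restricted to a cofinal subsystem whose members consist of path-connected sets, so that the surjectivity built into Lemma \ref{lem ex seq} applies to every projection from $X$ into the system.

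First I would fix the indexing. Consider the collection of numerable open covers of $X$ all of whose members are path-connected, ordered by refinement. Since $X$ is locally path-connected, path-components of open sets are open, so given any two such covers one may take the common refinement and then pass to path-components, producing another cover of the same type refining both; hence the collection is directed. By Lemma \ref{lem covers} (using that $X$ is paracompact, as holds under the standing hypotheses of this section, since metric compacta are paracompact) every open cover of $X$ is refined by a numerable cover with path-connected elements. Therefore this collection is cofinal among all the covers entering the canonical expansion.

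Next, to each such cover $\mathcal{U}$ I would associate its nerve $|\mathcal{U}|$, which is a polyhedron, together with the canonical map $u_{\mathcal{U}}\colon X\to|\mathcal{U}|$ defined from a subordinated partition of unity exactly as in the paragraph preceding Lemma \ref{lem ex seq}; refinements induce (up to homotopy) the bonding maps. This is the standard canonical polyhedral expansion of $X$, and restricting it to the cofinal directed subsystem just constructed again yields an expansion, since a cofinal subsystem of an expansion is an expansion. The key point is then immediate: each index $\mathcal{U}$ is a numerable cover by path-connected open sets, so Lemma \ref{lem ex seq} supplies a short exact sequence exhibiting $(u_{\mathcal{U}})_\sharp\colon \pi_1(X)\to \pi_1(|\mathcal{U}|)$ as surjective. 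Thus every induced homomorphism $(u_i)_\sharp$ is surjective, as required.

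I expect the main obstacle to be organizational rather than conceptual: verifying that the passage to path-connected covers is genuinely cofinal and directed (which is exactly where local path-connectedness enters, through the openness of path-components of open sets) and that restriction to this cofinal subsystem preserves the expansion axioms. One must also check that the map appearing as a projection in the expansion coincides with the canonical map $f$ of Lemma \ref{lem ex seq}, so that its surjectivity on fundamental groups may be quoted verbatim; this amounts to matching the two partition-of-unity definitions. Once these compatibilities are recorded, surjectivity is handed to us directly by Lemma \ref{lem ex seq}.
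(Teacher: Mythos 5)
Your proposal is correct and takes essentially the same route as the paper: the paper also builds the expansion from the \v{C}ech system of nerves of open covers (concretely, covers by metric balls of radius $1/i$, using compactness of $X$) and quotes Lemma \ref{lem ex seq} for the surjectivity of each $(u_i)_\sharp$. If anything, you are more careful than the paper, since you explicitly restrict to a cofinal subsystem of numerable covers with path-connected elements (via Lemma \ref{lem covers} and local path-connectedness), which is needed for Lemma \ref{lem ex seq} to apply, whereas the paper's covers by metric balls need not have path-connected members and this point is glossed over there.
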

\begin{proof}
We are going to exploit the technique used in the proof of Lemma \ref{lem ex seq}. In fact, let $\mathcal{U}$ be a finite, non-redundant open cover of $X$ 
and let $f\colon X\to |\mathcal{U}|$ the map induced by some choice of a partition of unity subordinated to $\mathcal{U}$. Then 
by Lemma \ref{lem ex seq} the induced homomorphism $f_\sharp\colon\pi_1(X)\to \pi_1(|\mathcal{U}|$ is surjective. Therefore, if we take the standard \v Cech 
expansion of $X$ by polyhedra $X_i$ that are nerves of covers of $X$ by metric balls of radius $1/i$ for $i=1,2,3,\ldots$, 
then the resulting inverse sequence satisfy the desired surjectivity property. 
\end{proof}

As a consequence we obtain strong surjectivity properties of inverse systems of fundamental groups associated to expansions of locally path-connected spaces.

\begin{corollary}
\label{cor stab surjective}
Let $(X_i,u_{ij},\mathcal{I})$ be any expansion of a connected and locally path-connected compact metric space $X$. Then the induced morphism  
$(\pi_1(X)\to\pi_1(X_i),i \in \mathcal{I})$ is stably surjective and the associated inverse system 
$(\pi_1(X_i),(u_{ij})_\sharp,\mathcal{I})$ satisfies the Mittag-Leffler condition.
\end{corollary}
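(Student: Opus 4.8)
The plan is to deduce both assertions from the two preceding lemmas together with the independence of the shape fundamental group $\check\pi_1(X)$ from the chosen expansion. Since $X$ is a metric compactum I may pass to cofinal sequences and assume that the index sets are countable, so that the identity $\pi_1(X_i)^\St=\varphi_i\bigl(\varprojlim_j\pi_1(X_j)\bigr)$ between the stable image and the image of the projection $\varphi_i$ from the inverse limit is at my disposal (for a general directed system the image of the projection can be strictly smaller than the stable image, which is why this reduction matters).

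First I would settle the Mittag-Leffler claim. By Lemma \ref{lem LPC expansion} there is an expansion $\mathbf{X}'=(X'_j,u'_{jk},\mathcal{I}')$ of $X$ for which every $(u'_j)_\sharp\colon\pi_1(X)\to\pi_1(X'_j)$ is surjective. As $(u'_{jk})_\sharp\circ (u'_k)_\sharp=(u'_j)_\sharp$, surjectivity of $(u'_j)_\sharp$ forces every bonding homomorphism $(u'_{jk})_\sharp$ to be surjective as well. Replacing, if necessary, the ANR terms of the given expansion by homotopy equivalent polyhedra (which leaves the associated system of fundamental groups unchanged up to pro-isomorphism), Lemma \ref{lem M-L expansions} applies with $\mathbf{X}'$ in the role of the expansion whose bonding maps are surjective and the given $(X_i,u_{ij},\mathcal{I})$ in the other role; it yields immediately that $(\pi_1(X_i),(u_{ij})_\sharp,\mathcal{I})$ satisfies the Mittag-Leffler condition.

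It remains to prove stable surjectivity, that is, $(u_i)_\sharp(\pi_1(X))=\pi_1(X_i)^\St$ for every $i$. The inclusion $\subseteq$ is automatic from $(u_i)_\sharp=(u_{ij})_\sharp\circ(u_j)_\sharp$. For the reverse inclusion I would compare the two expansions. Fixing $i\in\mathcal{I}$, the defining property of expansions furnishes an index $j\in\mathcal{I}'$ and a map $v\colon X'_j\to X_i$ with $v\circ u'_j=u_i$ (an exact equality once composed with the projections from $X$), whence $(u_i)_\sharp=v_\sharp\circ(u'_j)_\sharp$ and therefore $(u_i)_\sharp(\pi_1(X))=v_\sharp(\pi_1(X'_j))$ because $(u'_j)_\sharp$ is surjective. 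On the other hand, writing $\varphi_i\colon\check\pi_1(X)\to\pi_1(X_i)$ and $\varphi'_j\colon\check\pi_1(X)\to\pi_1(X'_j)$ for the two limit projections, the canonical identification $\varprojlim\pi_1(X'_k)\cong\check\pi_1(X)\cong\varprojlim\pi_1(X_k)$ satisfies $\varphi_i=v_\sharp\circ\varphi'_j$. Consequently every element of $\pi_1(X_i)^\St=\im\varphi_i$ has the form $v_\sharp(\varphi'_j(\xi))$ with $\varphi'_j(\xi)\in\pi_1(X'_j)$, so $\pi_1(X_i)^\St\subseteq v_\sharp(\pi_1(X'_j))=(u_i)_\sharp(\pi_1(X))$, giving the desired equality.

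The main obstacle I anticipate is precisely the bookkeeping in this last step: one must verify that the comparison map $v$ is compatible with the limit projections of both expansions (the relation $\varphi_i=v_\sharp\circ\varphi'_j$), which is the concrete manifestation of the independence of $\check\pi_1(X)$ from the expansion, and one must secure the identity $\pi_1(X_i)^\St=\im\varphi_i$. Both points rely on the reduction to countable cofinal sequences made at the outset, so the care is in justifying that reduction rather than in any new computation.
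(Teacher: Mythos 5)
Your Mittag--Leffler half is correct and is essentially the paper's own argument: Lemma \ref{lem LPC expansion} gives an expansion whose induced homomorphisms from $\pi_1(X)$ are surjective, hence whose bonding homomorphisms are surjective, and Lemma \ref{lem M-L expansions} then transfers the Mittag--Leffler property to the given expansion.

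The stable-surjectivity half, however, contains a genuine gap: you rest it on the identity $\pi_1(X_i)^\St=\varphi_i\bigl(\varprojlim_j\pi_1(X_j)\bigr)$ and claim that countability of the index set suffices for it. That is false, even for inverse sequences of abelian groups. Take $A_1=\ZZ/2$, $A_n=\bigoplus_{k\ge n}\ZZ/2$ for $n\ge 2$, with bonding maps the inclusions $A_{n+1}\hookrightarrow A_n$ for $n\ge 2$ and the sum-of-coordinates homomorphism $A_2\to A_1$. Every composite $A_n\to A_1$ is surjective, so $A_1^\St=\ZZ/2$; but in any thread the coordinates with $n\ge 2$ must all equal one element of $\bigcap_{n\ge 2}\bigoplus_{k\ge n}\ZZ/2=0$, so $\varprojlim A_n=0$ and $\varphi_1$ has trivial image. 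What makes the identity true is the Mittag--Leffler condition together with countability (under ML the bonding maps restricted to stable images are surjective, and for sequences surjectivity of bonding maps gives surjectivity of the projections). Since you establish ML before you need the identity, your argument can be repaired by invoking ML at that point; but as written the justification is wrong, and your closing paragraph locates the required care in the countability reduction, which is precisely the wrong place. Note also that the paper's proof avoids inverse limits entirely: using the expansion property in both directions it produces, for each $i$, a polyhedron $P$ from the nice expansion together with maps $v\colon X\to P$, $v_i\colon P\to X_i$ and $v_j\colon X_j\to P$ for some $j\ge i$, satisfying $v_i\circ v=u_i$, $v_\sharp$ surjective and $(u_{ij})_\sharp=(v_i)_\sharp(v_j)_\sharp$; then $\pi_1(X_i)^\St\subseteq(u_{ij})_\sharp(\pi_1(X_j))\subseteq(v_i)_\sharp(\pi_1(P))=(v_i)_\sharp\bigl(v_\sharp(\pi_1(X))\bigr)=(u_i)_\sharp(\pi_1(X))$, which yields stable surjectivity with no limit and no ML at all.
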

\begin{proof}
By Lemma \ref{lem LPC expansion} and the properties of an expansion we may find for each $i\in\mathcal{I}$ a polyhedron $P$ and maps $v\colon X\to P$ and 
$v_i\colon P\to X_i$ so that $v_i\circ v=u_i$ and $v_\sharp\colon \pi_1(X)\to\pi_1(P)$ is a surjection. On the other side, we may also find a $j\ge i$ and 
a map $v_j\colon X_j\to P$ so that $v_j\circ u_j=v$ and $v\circ v_j\simeq u_{ij}$. Thus we obtain the following diagram
$$\xymatrix{
X\ar[r]^{u_j} \ar[rd]_v & X_j \ar[r]^{u_{ij}} \ar[d]^-{v_j} & X_i\\
  & P \ar[ru]_{v_i}}
$$
which induces a commutative diagram of fundamental groups
$$\xymatrix{
\pi_1(X) \ar[r]^{(u_j)_\sharp} \ar@{->>}[rd]_{v_\sharp} & \pi_1(X_j) \ar[r]^{(u_{ij})_\sharp} \ar[d]^-{(v_j)_\sharp} & \pi_1(X_i)\\
  & \pi_1(P) \ar[ru]_{(v_i)_\sharp}}
$$
Since $(u_{ij})_\sharp$ factors through $(v_i)_\sharp$, it follows that $(v_i)_\sharp(\pi_1(P)$ contains the stable image $\pi_1(X_i)^\St$. 
But then the surjectivity of $v_\sharp$ implies that the homomorphism $(u_i)_\sharp\colon \pi_1(X)\to \pi_1(X_i)^\St$ is also surjective. 
\ \\[2mm]
The second claim follows immediately by Lemma \ref{lem M-L expansions}.
\end{proof}

Connected and locally path-connected compact metric spaced form a large class of spaces that include all finite polyhedra, compact manifolds and many other
important spaces. They are in fact more commonly known as \emph{Peano continua}. This name is a distant echo of the Peano space-filling curves, consolidated by
the famous Hahn-Mazurkiewicz Theorem that characterizes Peano continua 
as Hausdorff spaces that can be obtained as a continuous image of an arc.  The following theorem summarizes the main properties
of lifting spaces over Peano continua.

\begin{theorem}
\label{thm Peano continua}
Let $X$ be a Peano continuum, $\tilde q\colon \widetilde X\to X$ its universal lifting space and $p\colon\overline X\to X$ any lifting space that can be 
obtained as a limit of an inverse system of coverings $p_i\colon \overline X_i\to X_i$ over some expansion of $X$. Then: 
\begin{enumerate}
\item 
There exists a unique lifting projection $q\colon \widetilde X\to \overline X$
such that $p\circ q=\tilde q$. Moreover, the image $q(\widetilde X)$ is dense in $\overline X$.
\item  
The group of deck transformations $A(p)$ acts freely on $\overline X$. 
\item
If $\overline X$ is an inverse limit of normal coverings  then $A(p)$  acts freely and transitively on the fibres of $p$, 
and there is an isomorphism $A(p)\cong \varprojlim A(p_i)$. In particular, $A(\tilde q)$ is naturally isomorphic with the shape 
fundamental group $\check{\pi}_1(X)$.
\item 
If $\overline X$ is an inverse limit of normal coverings then $p$ induces a homeomorphism $\bar p\colon \overline X/A(p)\to X$. 
\item
There is an exact sequence of groups and sets 
$$\xymatrix{
1 \ar[r] & {\pi_1(\widetilde X)} \ar[r]^{(\tilde{q})_\sharp} & \pi_1(X) \ar[r]^\partial & {\check{\pi}_1(X)} \ar[r] &
\pi_0(\widetilde X) \ar[r] & \ast}$$
In particular, $\pi_1(\widetilde X)$ may be identified with the kernel of the natural homomorphism $\partial\colon\pi_1(X)\to\check{\pi}_1(X)$, also known
as \emph{shape kernel} of $X$. Similarly, $\pi_0(\widetilde X)$ may be identified with the \emph{shape cokernel} of $X$, namely the set of cosets
$\check{\pi}_1(X)/\partial(\pi_1(X))$. 
\item 
Let $\mathbf{G}=(G_i)$ be the coherent thread of groups, determined by $\overline X$. Then the map $q\colon \widetilde X\to \overline X$ induces a commutative ladder:
$$\xymatrix{
1 \ar[r] & {\pi_1(\widetilde X)} \ar[d]_{q_\sharp} \ar[r]^{\tilde{q}_\sharp} & \pi_1(X)\ar@{=}[d] \ar[r]^\partial & {\check{\pi}_1(X)} \ar[r] \ar[d] &
\pi_0(\widetilde X) \ar[r] \ar[d]_{q_\sharp} & \ast\\
1 \ar[r] & {\pi_1(\overline X)} \ar[r]_{p_\sharp} & \pi_1(X) \ar[r]_-\partial & {\varprojlim \pi_1(X_i)/G_i} \ar[r] &
\pi_0(\overline X) \ar[r] & \ast
}$$
\end{enumerate}
\end{theorem}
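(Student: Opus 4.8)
The plan is to assemble the theorem from the machinery of Sections 4 and 5, the decisive input being that a Peano continuum satisfies the strong surjectivity hypotheses required there. Concretely, since a Peano continuum is connected and locally path-connected (hence path-connected, so that the exact sequence of Theorem \ref{thm ex seq} applies), Corollary \ref{cor stab surjective} tells us that for \emph{every} polyhedral expansion $(X_i,u_{ij},\mathcal{I})$ of $X$ the morphism $(\pi_1(X)\to\pi_1(X_i),\mathcal{I})$ is stably surjective and the associated system $(\pi_1(X_i),(u_{ij})_\sharp,\mathcal{I})$ satisfies the Mittag-Leffler condition. I would fix such an expansion at the outset and write $\overline X=\widetilde X_\mathbf{G}$ for the (essentially unique) coherent thread $\mathbf{G}=(G_i)$ supplied by the first assertion of Theorem \ref{thm dense image}; everything else is then a specialization of earlier results, with the trivial thread $G_i=1$ recovering $\widetilde X$ itself.

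For (1), the existence and uniqueness of $q\colon\widetilde X\to\overline X$ with $p\circ q=\tilde q$ is exactly the first part of Theorem \ref{thm dense image}. For density I would invoke the Mittag-Leffler property just recorded together with Corollary \ref{cor dense image}(1). By the density criterion of Theorem \ref{thm dense image} this in turn means that the induced map $\pi_1(X_i)^\St\to(\pi_1(X_i)/G_i)^\St$ is surjective for every $i$; composing it with the stable surjectivity of $\pi_1(X)\to\pi_1(X_i)^\St$ (Corollary \ref{cor stab surjective}) shows that the morphism $((u_i)_\sharp\colon\pi_1(X)\to\pi_1(X_i)/G_i,\mathcal{I})$ is itself stably surjective. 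Part (2) is then immediate, since Theorem \ref{thm st surj} guarantees that stable surjectivity of this morphism forces $A(p)=A(p_\mathbf{G})$ to act freely on $\overline X$.

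For (3) and (4) I would specialize to a thread $\mathbf{G}$ of \emph{normal} subgroups and apply Corollary \ref{cor A(p) on fibre}, whose hypothesis we have just verified: it yields that $A(p)$ acts freely and transitively on the fibre and is isomorphic to $\varprojlim A(p_i)$, while Theorem \ref{thm A(p)} identifies the latter with $\varprojlim\pi_1(X_i)/G_i$. Taking the trivial thread gives $A(\tilde q)\cong\varprojlim\pi_1(X_i)=\check{\pi}_1(X)$, the asserted identification with the shape fundamental group. The homeomorphism $\bar p\colon\overline X/A(p)\to X$ of (4) is the second statement of Corollary \ref{cor A(p) on fibre}, whose local-path-connectedness requirement holds because a Peano continuum is by definition locally path-connected.

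Finally, (5) and (6) are obtained by reading off Theorem \ref{thm ex seq}. For (5) I would take the trivial thread, so that $\varprojlim\pi_1(X_i)/G_i=\varprojlim\pi_1(X_i)=\check{\pi}_1(X)$; Theorem \ref{thm ex seq} then gives $\pi_1(\widetilde X)\cong\bigcap_\mathcal{I}\Ker(u_i)_\sharp=\Ker\partial$, the shape kernel, and identifies $\pi_0(\widetilde X)$ with the shape cokernel $\check{\pi}_1(X)/\partial(\pi_1(X))$, the expansion-independence being exactly the invariance of $\check{\pi}_1(X)$ recalled before the theorem. For (6) the two rows are the sequences attached to the trivial thread and to $\mathbf{G}$, the vertical arrows are induced by $q$, the identity on the common middle term $\pi_1(X)$ comes from $p\circ q=\tilde q$, and commutativity is the naturality of the sequence of Theorem \ref{thm ex seq} under the morphism of threads $1\to\mathbf{G}$. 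The one point requiring genuine care — and the main obstacle — is the bootstrapping feeding (2)--(4): the Peano hypothesis, through Corollary \ref{cor stab surjective}, directly gives stable surjectivity only of $\pi_1(X)\to\pi_1(X_i)$, and it is the density result of (1) that is needed to upgrade this to stable surjectivity of $\pi_1(X)\to\pi_1(X_i)/G_i$, which is the precise form demanded by Theorem \ref{thm st surj} and Corollary \ref{cor A(p) on fibre}.
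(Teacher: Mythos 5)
Your proof is correct and follows essentially the same route as the paper: Theorem \ref{thm dense image} together with Corollaries \ref{cor stab surjective} and \ref{cor dense image} for (1), Theorem \ref{thm st surj}, Theorem \ref{thm A(p)} and Corollary \ref{cor A(p) on fibre} for (2)--(4), and Theorem \ref{thm ex seq} with its naturality for (5)--(6). The only difference is that you make explicit the bootstrapping from stable surjectivity of $\pi_1(X)\to\pi_1(X_i)$ to stable surjectivity of $\pi_1(X)\to\pi_1(X_i)/G_i$ (the precise hypothesis of Theorem \ref{thm st surj} and Corollary \ref{cor A(p) on fibre}), a step the paper's terse proof leaves implicit.
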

\begin{proof}
The existence of the map $q\colon \widetilde X\to \overline X$ was proved in Theorem \ref{thm dense image}. 
As for the second claim, observe that by Lemma \ref{lem LPC expansion} $X$ admits an expansion such that the induced homomorphisms 
between fundamental groups are surjective. By Corollary \ref{cor stab surjective} the induced system satisfies the Mittag-Leffler condition,
and then by Corollary \ref{cor dense image} it follows that  $q(\widetilde X)$ is dense in $X$.

The statements 2., 3. and 4. also follow from Corollary \ref{cor stab surjective}, combined with Theorem \ref{thm st surj} and Corollary \ref{cor A(p) on fibre}.

Finally, 5. and 6. follow from Theorem \ref{thm ex seq}, in particular from the naturality of the exact sequence of a fibration.
\end{proof}

Observe that the shape kernel and shape cokernel are not shape invariants: in fact they are more like 'anti-invariants' as they measure the variation
of the structure of the universal lifting space within shape-equivalent spaces.

\

We conclude the section with a lifting theorem for inverse limits of covering spaces. Given a map $f\colon X\to Y$ and 
lifting spaces $p\colon {\widetilde X} \to X$ and $q\colon {\widetilde Y}\to Y$ we would like to know if there exists a map
$\tilde f$ for which the following diagram commutes.
$$\xymatrix{
{\widetilde X} \ar@{-->}[r]^{\tilde f} \ar[d]_{p} & {\widetilde Y}  \ar[d]^{q}\\
X \ar[r]^{f} & Y
}$$
If $\widetilde X$ is connected and locally path connected then the answer is given by the classical lifting theorem \cite[Theorem II.4.5]{Spanier}: 
$\widetilde f$ exists if, and only if $f_\sharp(p_\sharp(\pi_1({\widetilde X}))\subseteq q_\sharp(\pi_1(\widetilde Y))$. We are going to 
extend this result to more general lifting spaces. Observe that a very special case was already considered as a part
of Proposition \ref{functoriality}.

Let us first show that the property of being the limit of a sequence of covering spaces over a polyhedral expansion for $X$ 
is independent on the choice of the expansion. 

\begin{proposition}
\label{prop: invlim coverings}
Let $X$ be the limit of an polyhedral expansion $\mathbf{X}=(X_i,u_{ij}\colon X_j\to X_i,i,j\in\NN)$ and let 
Let $p_\mathbf{G}\colon \widetilde X_\mathbf{G}\to X$ be the lifting space determined by a coherent thread $\mathbf{G}$ of 
subgroups of $\pi_1(X_i)$. Then for every polyhedral expansion $\mathbf{P}=(P_i,v_{ij}\colon P_j\to P_i,i,j\in\NN)$ for $X$ 
there exists a coherent thread $\mathbf{H}$, such that $\widetilde X_\mathbf{H}=\widetilde X_\mathbf{G}$ and 
$p_\mathbf{H}=p_\mathbf{G}$.
\end{proposition}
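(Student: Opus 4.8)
The plan is to mirror the proof of Proposition \ref{functoriality}, the only genuinely new ingredient being the construction of the thread $\mathbf{H}$ and the verification that the relevant covering spaces lift. Since $\mathbf{X}$ and $\mathbf{P}$ are two polyhedral expansions of the same based space $X$, they are isomorphic in the pointed pro-homotopy category: the identity on $X$ induces morphisms of systems $\mathbf{f}\colon\mathbf{X}\to\mathbf{P}$ and $\mathbf{g}\colon\mathbf{P}\to\mathbf{X}$ that are mutually inverse up to (based) homotopy and that commute exactly when composed with the projections from $X$. Concretely $\mathbf{f}$ is given by an increasing index function $f\colon\NN\to\NN$ and based maps $f_k\colon X_{f(k)}\to P_k$ satisfying $f_k\circ u_{f(k)}=v_k$ and $f_k\circ u_{f(k)f(l)}\simeq v_{kl}\circ f_l$ for $k\le l$, and symmetrically for $\mathbf{g}=(g,\{g_i\})$. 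The interpretation of the asserted equality is that $\Phi$ constructed below is a homeomorphism over $X$, so that $\widetilde X_\mathbf{H}$ and $\widetilde X_\mathbf{G}$ coincide as lifting spaces over $X$.

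First I would define the candidate thread by $H_k:=(f_k)_\sharp\bigl(G_{f(k)}\bigr)\le\pi_1(P_k)$. Applying $\pi_1$ to the homotopy $f_k\circ u_{f(k)f(l)}\simeq v_{kl}\circ f_l$ gives $(v_{kl})_\sharp(f_l)_\sharp=(f_k)_\sharp(u_{f(k)f(l)})_\sharp$, so that $(v_{kl})_\sharp(H_l)=(f_k)_\sharp(u_{f(k)f(l)})_\sharp(G_{f(l)})\subseteq(f_k)_\sharp(G_{f(k)})=H_k$, using that $\mathbf{G}$ is coherent; hence $\mathbf{H}$ is a coherent thread and $p_\mathbf{H}\colon\widetilde X_\mathbf{H}\to X$ is a lifting space by Proposition \ref{propliftings}. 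The essential point is that working in the pointed setting makes every induced map on fundamental groups an honest homomorphism, so these subgroup inclusions hold on the nose rather than merely up to conjugation.

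Next I would lift the comparison maps to the covering towers. Because $(f_k)_\sharp(G_{f(k)})=H_k$, the lifting criterion for covering spaces (\cite[Theorem 2.4.5]{Spanier}) yields a unique based map $\widetilde f_k\colon\widetilde X_{f(k)}/G_{f(k)}\to\widetilde P_k/H_k$ over $f_k$; these assemble into a morphism of inverse systems, and passing to limits (using $f_k\circ u_{f(k)}=v_k$) gives a fibre-preserving map $\Phi\colon\widetilde X_\mathbf{G}\to\widetilde X_\mathbf{H}$ over $X$. For the reverse direction one uses $g_i\circ f_{g(i)}\simeq u_{i,f(g(i))}$ (the relation $\mathbf{g}\mathbf{f}\simeq\mathrm{id}$, after raising $f(g(i))$ above $i$ if needed) to compute $(g_i)_\sharp(H_{g(i)})=(u_{i,f(g(i))})_\sharp(G_{f(g(i))})\subseteq G_i$, so the same criterion produces maps $\widetilde g_i$ and a map $\Psi\colon\widetilde X_\mathbf{H}\to\widetilde X_\mathbf{G}$ over $X$.

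Finally I would show $\Psi\circ\Phi=\mathrm{id}$ and $\Phi\circ\Psi=\mathrm{id}$. Both composites cover $\mathrm{id}_X$ and fix the base point, so by unique path-lifting it is enough to check that they restrict to the identity on the fibre $F=\varprojlim\pi_1(X_i)/G_i$ over $x_0$: indeed, given any $x\in\widetilde X_\mathbf{G}$, choosing a path $\gamma$ in $X$ from $p_\mathbf{G}(x)$ to $x_0$ exhibits $x$ as the endpoint of the lift of $\overline\gamma$ starting at the point $\langle\gamma,x\rangle(1)\in F$, and a self-map over $\mathrm{id}_X$ that fixes that initial point fixes the whole lifted path. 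On $F$ the maps $\Phi,\Psi$ act as the limits of the coset bijections induced by $(f_k)_\sharp$ and $(g_i)_\sharp$, and since $\mathbf{f}$ and $\mathbf{g}$ are pro-inverse these limit maps are mutually inverse. Thus $\Phi$ is a homeomorphism commuting with the projections to $X$, which is exactly the required identification $\widetilde X_\mathbf{H}=\widetilde X_\mathbf{G}$ and $p_\mathbf{H}=p_\mathbf{G}$. The hard part will be the bookkeeping that converts the pro-homotopy inverse relations into the on-the-nose inclusions $(v_{kl})_\sharp(H_l)\subseteq H_k$ and $(g_i)_\sharp(H_{g(i)})\subseteq G_i$ — that is, controlling the reindexing and the base points so that all homotopies can be taken based; once that is secured, the limit argument on fibres is routine.
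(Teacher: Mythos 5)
Your route is genuinely different from the paper's. The paper compares the two expansions by an \emph{interleaving} argument: it chooses approximation maps going the other way, $f_i\colon P_j\to X_i$ with $f_iv_j=u_i$ and $g_j\colon X_k\to P_j$ with $g_ju_k=v_j$, sets $H_j:=(f_i)_\sharp^{-1}(G_i)$ (a \emph{preimage}, not an image), and uses the coherence of $\mathbf{G}$ together with the homotopy $u_{il}\simeq f_ig_ju_{kl}$ to produce an alternating inverse sequence of covering projections $\widetilde X_l/G_l\to\widetilde P_j/H_j\to\widetilde X_i/G_i$, from which the identification of the limits is read off as a refinement/cofinality statement. You instead build a two-sided comparison: the image thread $H_k:=(f_k)_\sharp(G_{f(k)})$, lifted morphisms in both directions, and limit maps $\Phi$, $\Psi$ shown to be mutually inverse by restriction to the fibre. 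Your subgroup computations are correct once all homotopies are based (based-homotopic maps induce equal homomorphisms on $\pi_1$, so coherence of $\mathbf{H}$ and $(g_i)_\sharp(H_{g(i)})\subseteq G_i$ do hold on the nose), and your fibre argument -- every point of $\widetilde X_\mathbf{G}$ is joined by a lifted path to the fibre over $x_0$, and a self-map over $\mathrm{id}_X$ fixing the start of a lifted path fixes all of it -- is correct and in fact more explicit than anything in the paper's one-paragraph proof.

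However, you mislocate the hard part. The bookkeeping you flag (reindexing, basing the homotopies) is the easy half; the genuine gap is the sentence ``these assemble into a morphism of inverse systems, and passing to limits gives $\Phi$''. The lifted squares commute only up to based homotopy: $\bar v_{kl}\circ\widetilde f_l$ and $\widetilde f_k\circ\bar u_{f(k)f(l)}$ are based lifts of based-homotopic but in general \emph{unequal} maps $X_{f(l)}\to P_k$, hence are in general distinct maps, and one cannot take the inverse limit of a ladder that commutes only up to homotopy. What partially rescues the construction is that after composing with $\bar u_{f(l)}\colon\widetilde X_\mathbf{G}\to\widetilde X_{f(l)}/G_{f(l)}$ both composites become lifts of the \emph{same} map $v_k\circ p_\mathbf{G}$ (using $f_k\circ u_{f(k)}=v_k$ exactly); two lifts of one map through a covering agree on a clopen set, so they agree on the quasi-component of the base point -- but $\widetilde X_\mathbf{G}$ need not be connected, so well-definedness of $\Phi$ on all of $\widetilde X_\mathbf{G}$ still requires an argument or a more careful choice of the $f_k$ and of the homotopies. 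To be fair, the paper's own proof elides exactly the same strictification issue (its interleaved sequence has composite bonding maps that are only homotopic to the original $\bar u_{il}$, and the concluding ``it follows that'' hides the comparison of the two limits), so your proposal is at the same level of rigor as the printed proof; but if you intend a complete write-up, it is this step, not the subgroup bookkeeping, that must be supplied.
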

\begin{proof}
We will use properties of polyhedral expansions to obtain approximations of the identity map of $X$ with respect to 
the expansions $\mathbf{X}$ and $\mathbf{P}$ (cf. \cite{Mardesic-Segal}). 
For every $i$ there exists $j$ and a map $f_i\colon P_j\to X_i$ such that 
$f_iv_j=u_i$. Furthermore, there exists a $k$ and a map $g_j\colon X_k\to P_j$ such that $g_ju_k=v_j$. Finally, since 
$f_i$ and $g_j$ are both approximations of the identity on $X$ there exists some index $l$ such that 
$u_{il}\simeq f_ig_ju_{kl}$. 
$$\xymatrix{
X \ar@{=}[dd] \ar[r]^{u_l} &X_l \ar@{=}[dd]\ar[rrr]^{u_{il}} & & & X_i\\
& & & P_j \ar[ur]_{f_i}\\
X\ar[r]_{u_l} & X_l \ar[r]_{u_{kl}} & X_k \ar[ur]_{g_k}
}$$
Let $H_j:=(f_{i\sharp})^{-1}(G_i)\le\pi_1(P_j)$. Then $(g_ku_{kl})_\sharp(G_l)\subseteq H_j$ and we obtain 
a sequence of covering projections
$$\xymatrix{
\widetilde X_l/G_l \ar[r] \ar[d]_{p_l} & \widetilde P_j/H_j \ar[d]_{q_j} \ar[r] & \widetilde X_i/G_i\ar[d]_{p_i}\\
X_l\ar[r]_{g_ku_{kl}} & P_j \ar[r]_{f_i} & X_i
}$$
which shows that we may refine the system of coverings over $X_i$ by coverings over $P_i$. It follows that $\widetilde X_\mathbf{G}$
can be represented as an inverse limit of coverings over the system $\mathbf{P}$ with respect to some coherent thread of groups 
$\mathbf{H}$.
\end{proof}

\begin{theorem}
\label{thm:lifting criterion}
Let $\mathbf{X}$ be a polyhedral expansion for $X$ and $p_\mathbf{G}\colon \widetilde X_\mathbf{G}\to X$ 
the lifting space determined by a coherent thread $\mathbf{G}$. Similarly, let  
$\mathbf{Y}$ be a polyhedral expansion for $Y$ and $q_\mathbf{H}\colon \widetilde Y_\mathbf{H}\to Y$ 
the lifting space determined by a coherent thread $\mathbf{H}$. 

Then a map $f\colon X\to Y$ can be lifted to a map
$\tilde f\colon \widetilde X_\mathbf{G}\to\widetilde Y_\mathbf{H}$ if, and only if for a morphism 
$\mathbf{f}\colon\mathbf{X}\to\mathbf{Y}$ induced by $f$ we have that for every index $i$ there exists some index $j$ such that
$(f_iu_{ij})_\sharp(G_j)\le H_i$.

The condition for the existence of a lifting is independent from the choices of expansions for $X$ and $Y$. 
\end{theorem}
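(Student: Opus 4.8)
The plan is to reduce the global lifting problem to a levelwise lifting problem for ordinary covering spaces and then pass to the inverse limit, invoking the classical lifting criterion \cite[Theorem II.4.5]{Spanier} at each stage. The key observation is that for the covering projection $p_j\colon\widetilde X_j/G_j\to X_j$ one has $(p_j)_\sharp\pi_1(\widetilde X_j/G_j)=G_j$, and analogously the image of $\pi_1$ of $\widetilde Y_i/H_i$ in $\pi_1(Y_i)$ equals $H_i$. Hence the composite $f_iu_{ij}\circ p_j\colon\widetilde X_j/G_j\to Y_i$ lifts through the covering $\widetilde Y_i/H_i\to Y_i$ if and only if $(f_iu_{ij})_\sharp(G_j)\le H_i$, which is exactly the asserted condition; the whole argument is organized around producing or obstructing such levelwise lifts.

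For sufficiency I would, for each index $i$, fix $j=j(i)$ with $(f_iu_{ij})_\sharp(G_j)\le H_i$ and invoke the classical criterion to obtain the unique base-point preserving lift $\widehat f_i\colon\widetilde X_{j(i)}/G_{j(i)}\to\widetilde Y_i/H_i$ covering $f_iu_{ij(i)}$. The rigidity of covering lifts---two lifts that agree at the base point coincide---then forces these maps to be compatible with the bonding maps of the systems $(\widetilde X_i/G_i)$ and $(\widetilde Y_i/H_i)$: the two ways of composing $\widehat f_{i'}$, $\widehat f_i$ and the relevant bonding maps both lift one and the same map (the bonding relations commute on the nose once composed with the projections from $X$, as recorded before Proposition \ref{functoriality}) and both fix the base point, so they are equal. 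Thus $(\widehat f_i)$ is a morphism of inverse systems and $\tilde f:=\varprojlim\widehat f_i$ is the required lift of $f$.

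For necessity, suppose a lift $\tilde f$ is given, let $\phi$ be the index function of $\mathbf{f}$, and compose with the projection $\widetilde Y_\mathbf{H}\to\widetilde Y_i/H_i$ to obtain $g_i\colon\widetilde X_\mathbf{G}\to\widetilde Y_i/H_i$; a diagram chase shows that $g_i$ covers $f_i\,p_{\phi(i)}\,\bar u_{\phi(i)}$ and therefore factors on the base through the polyhedral stage $X_{\phi(i)}$. The crux is to promote this to a factorization of $g_i$ itself, namely to produce an index $j$ and a map $\widehat f_i\colon\widetilde X_j/G_j\to\widetilde Y_i/H_i$ with $g_i=\widehat f_i\circ\bar u_j$; granting this, $\widehat f_i$ covers $f_iu_{ij}$ and the necessity half of the classical criterion delivers $(f_iu_{ij})_\sharp(G_j)\le H_i$. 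I expect this finite-stage factorization to be the principal obstacle. It should follow from the fact that $\widetilde Y_i/H_i$ is a covering of the polyhedron $Y_i$ with discrete fibres (cf.\ Proposition \ref{prop fibre limit}): over each point the sheet selected by $g_i$ is locally constant and hence ought to be determined by finitely many coordinates of $\widetilde X_\mathbf{G}$, but turning this continuity-and-discreteness heuristic into a \emph{uniform} choice of $j$ is precisely where the expansion structure must be exploited with care.

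For the final independence claim I would observe that the existence of a lift $\tilde f$ is an intrinsic property of the fixed lifting spaces $\widetilde X_\mathbf{G}$, $\widetilde Y_\mathbf{H}$ and of $f$, involving no choice of expansion; since we have just shown that this existence is equivalent to the stated condition, the condition itself must be expansion-independent. Concretely, confronted with a second pair of expansions one can use Proposition \ref{prop: invlim coverings} to realize the same two lifting spaces by coherent threads over the new systems, and the approximation maps between the two expansions carry the condition verbatim from one description to the other, confirming the invariance.
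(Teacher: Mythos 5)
Your treatment of sufficiency and of the independence claim is essentially the paper's proof: the paper likewise produces the level lifts $\widehat f_i$ from the classical criterion \cite[Theorem II.4.5]{Spanier}, assembles them ``after suitable reindexing'' into a morphism of systems, and passes to the limit, and it likewise reduces independence to Proposition \ref{prop: invlim coverings}. Your extra care about compatibility of the level lifts is sound, modulo one caveat: at finite stages the two relevant composites cover maps that are only \emph{homotopic} (the morphism $\mathbf{f}$ commutes with the bonding maps only up to homotopy), so the exact agreement needed to define $\varprojlim \widehat f_i$ is available only after composing with the projections from the limit --- the device recorded before Proposition \ref{functoriality}, which you correctly invoke.

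The genuine issue is the necessity direction, where your proposal has a gap that you flag yourself. The paper dismisses this direction as ``obvious'', and it is obvious under the reading that matches the paper's construction: if the lift is given as (the limit of) a morphism of the systems of coverings, then each level map $\widehat f_i$ covers $f_iu_{ij}$, and the easy half of Spanier's criterion yields $(f_iu_{ij})_\sharp(G_j)=(f_iu_{ij}p_j)_\sharp\pi_1(\widetilde X_j/G_j)\le (q_i)_\sharp\pi_1(\widetilde Y_i/H_i)=H_i$ at once. You instead take $\tilde f$ to be an arbitrary continuous fibre-preserving map and attempt to factor $g_i=\bar v_i\circ\tilde f$ through a finite stage $\widetilde X_j/G_j$; this factorization is exactly the hard point, and the discreteness-of-fibres heuristic cannot produce it. Indeed, the projection $\bar u_j\colon\widetilde X_\mathbf{G}\to\widetilde X_j/G_j$ is in general not surjective --- the limit sees only the stable part of level $j$ (compare the stable images in Theorem \ref{thm dense image}) --- so loops representing elements of $G_j$ need not be visible in $\widetilde X_\mathbf{G}$ at all; moreover $\widetilde X_\mathbf{G}$ need not be compact, since $\widetilde X_j/G_j$ is compact only when $G_j$ has finite index in $\pi_1(X_j)$. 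What your argument really needs is that the system $(\widetilde X_j/G_j,\bar u_{jk})$ is itself an expansion (resolution) of its limit, so that a map from $\widetilde X_\mathbf{G}$ to the polyhedron $\widetilde Y_i/H_i$ factors through a finite stage up to homotopy, after which the homotopy can be corrected using the covering $q_i$ and the easy half of the classical criterion applies. That property holds, for instance, when all the $G_j$ have finite index (so the system consists of compact polyhedra), but it is not automatic, and it is supplied neither by your argument nor, strictly speaking, by the paper, whose ``obvious'' implicitly presupposes the weaker, levelwise notion of a lift.
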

\begin{proof}
The assumption that for every $i$ there is some $j$, such that $(f_iv_{ij})_\sharp(H_j)\le G_i$ implies that there exist maps 
$\tilde f_i$ so that the following diagram commutes 
$$\xymatrix{
\widetilde X_j/G_j \ar[r]^{\tilde f_i} \ar[d]_{p_j} & \widetilde Y_i/H_i\ar[d]^{q_i}\\
X_j \ar[r]_{f_iu_{ij}} & Y_i
}$$
After suitable reindexing we see that maps $\tilde f_i$ determine a morphism of systems 
$\tilde{\mathbf{f}}\colon\widetilde{\mathbf{X}}/\mathbf{G}\to\widetilde{\mathbf{Y}}/\mathbf{H}$, and hence define the lifting 
$\tilde f\colon \widetilde X_\mathbf{G}\to \widetilde Y_\mathbf{H}$ for $f$. 

That the existence of the lifting is independent from the choice of the expansions for $X$ and $Y$ follows from 
Proposition \ref{prop: invlim coverings}. We prefer to omit technical details. The converse implication is obvious.
\end{proof}

%------------------------------------------------------------------------------------------------------------------------------------------------------------------------------------
%   WHEN IS THE HAT SPACE A FIBRATION?
%------------------------------------------------------------------------------------------------------------------------------------------------------------------------------------
\ \\

\section{When is the hat space a lifting space?}

In the preceding sections we have mostly assumed that the spaces under consideration are locally path-connected.
For more general spaces we may first construct the hat space mentioned in Example \ref{ex hat}. 
Since the unique path-lifting property of the map $\iota\colon \hX\to X$ is obvious, $\iota$ is a lifting
space if and only if it is a fibration. When this happens,
every lifting space over $\hX$ is automatically a lifting space over $X$. Moreover, every lifting projection 
$p\colon Y\to X$ with $Y$ locally path-connected, factors through $\hX$ as a composition of two lifting projections.

In this section we discuss in detail the fibration properties of the hat construction. 
In particular we prove that the hat construction over a metric space yields
a fibration for the class of all metric (in fact 1-countable) spaces if and only if the hat space is 
locally compact. 

Recall that the \emph{hat space} $\hX$ is obtained from a space $X$ generated by taking the path components 
of open sets in the topology of $X$ as a sub-basis.  
The identity map $\iota\colon\hX \to X$ is continuous and bijective. If $f\colon Y\to X$ is 
a continuous map and $C$ a path-component of an open set $U\subseteq X$, then 
$f^{-1}(C)$ is a union of components of the open set $f^{-1}(U)$. Therefore, if $Y$ is locally 
path-connected, then $f\colon Y\to \hX$ is also continuous. 
In particular the paths in $X$ correspond precisely to paths in $\hX$ and the same holds for 
path-components of subsets as well. This implies that $\hX$ is 
locally path-connected, so $\widehat{(\hX)}=\hX$. In addition,  if $f\colon Y\to X$ is continuous, 
then $\hat f\colon \widehat Y\to \hX$ (where $\hat f=f$ as 
a function  between sets) is also continuous, and the following diagram commutes
$$\xymatrix{
{\widehat Y} \ar[r]^{\widehat f} \ar[d]_-{\iota} & {\widehat X} \ar[d]^-{\iota}\\
Y \ar[r]_f & X
}$$
We may summarize these facts in categorical terms by saying that the hat construction is an idempotent augmented functor. 

What can be said about the fibration properties of the hat construction? Since maps and homotopies 
from cubes have unique liftings, the projection 
$\iota\colon\hX\to X$ is a  Serre fibration with the unique path-lifting property. In particular, 
this shows that $X$ and $\hX$ have isomorphic homotopy groups. However, the following 
example show that it is not in general a Hurewicz fibration.

\begin{example}
Let us  denote by $\s:=\{1/n\mid n \in \mathbb{N}\} \cup \{0\}\subseteq \mathbb{R}$ the 'model' 
convergent sequence, and let $C\s$ be the cone over $\s$, 
which we view as a subspace of the plane, e.g. $C\s:=\{(t,tu)\in\RR^2\mid t\in[0,1], u\in\s\}$. 
It is easy to see that $\widehat{C\s}$ can be identified
with a countable one-point union of intervals. Consider the homotopy $H\colon C\s\times I\to C\s$, given 
by $H\big((x,y),t\bigl):=(tx,ty)$. Then the constant map $H_0$ lifts to $\widehat{C\s}$ but the 
entire homotopy $H$ cannot be lifted,
because its final stage would give a continuous inverse to $\iota\colon\widehat{C\s}\to C\s$, which would
imply that $C\s$ is locally path-connected, a contradiction.
\end{example}

Is there some natural condition that would imply that $\iota\colon\hX\to X$ is a fibration for 
a sufficiently large class of
spaces, like the metric spaces? 
Toward an answer to this question we prove that in order to check 
the fibration property for metric spaces it is sufficient to check that $\iota$ has the covering
homotopy property for maps from the model sequence $\s$.

\begin{lemma}
\label{hlpslem}
The canonical map $\iota\colon\hX \to X$ 
is a fibration for the class of first countable spaces if and only if it has the homotopy lifting property 
for maps from $\s$ to $X$.
\end{lemma}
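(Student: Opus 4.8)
The plan is to prove the two implications separately, the interesting one being that the single test space $\s$ already forces the homotopy lifting property (HLP) for every first countable space. One direction is immediate: since $\s$ is a (first countable) convergent sequence, if $\iota\colon\hX\to X$ is a fibration for the class of first countable spaces then in particular it has the HLP for homotopies $\s\times I\to X$. For the converse I fix a first countable space $Z$, a map $f\colon Z\to\hX$ and a homotopy $H\colon Z\times I\to X$ with $H_0=\iota\circ f$, and I must produce a continuous lift $\widetilde H\colon Z\times I\to\hX$ with $\widetilde H_0=f$ and $\iota\circ\widetilde H=H$. The first observation is that $\widetilde H$ is already determined set-theoretically: since $\iota$ is a bijection on points with unique path lifting, and since $I$ is locally path-connected (so every path in $X$ factors through $\hX$), the path $t\mapsto H(z,t)$ lifts uniquely to a path in $\hX$ starting at $f(z)$, and I define $\widetilde H(z,t)$ to be its value at $t$. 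This $\widetilde H$ automatically satisfies $\iota\circ\widetilde H=H$ and $\widetilde H_0=f$, so the \emph{only} content left is the joint continuity of $\widetilde H$ as a map into the finer topology of $\hX$.

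Next I reduce this continuity to a sequential statement. Because $Z$, and hence $Z\times I$, is first countable, $\widetilde H$ is continuous if and only if it is sequentially continuous. I therefore argue by contradiction: suppose there is a convergent sequence $(z_n,t_n)\to(z_0,t_0)$ in $Z\times I$ with $\widetilde H(z_n,t_n)\not\to\widetilde H(z_0,t_0)$ in $\hX$. Passing to a subsequence, I may fix a sub-basic open set $C$ of $\hX$, namely a path-component of some open $U\subseteq X$, that contains $\widetilde H(z_0,t_0)$ while $\widetilde H(z_n,t_n)\notin C$ for every $n$.

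The heart of the argument is to feed exactly this failure into the space $\s$. I define $b\colon\s\to Z$ by $b(1/n):=z_n$ and $b(0):=z_0$; this is continuous precisely because $z_n\to z_0$ in $Z$. Setting $G:=H\circ(b\times\mathrm{id}_I)\colon\s\times I\to X$, I have $G_0=\iota\circ(f\circ b)$, so by the assumed HLP for $\s$ there is a continuous lift $\widetilde G\colon\s\times I\to\hX$ with $\widetilde G_0=f\circ b$ and $\iota\circ\widetilde G=G$. Unique path lifting then forces $\widetilde G=\widetilde H\circ(b\times\mathrm{id}_I)$, since for each $u\in\s$ both $t\mapsto\widetilde G(u,t)$ and $t\mapsto\widetilde H(b(u),t)$ are lifts of $t\mapsto H(b(u),t)$ starting at the common point $f(b(u))$. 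The point of this construction is that the varying $t$-coordinate is carried by the $I$-factor of the homotopy rather than encoded into the map out of $\s$, so no information is lost.

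Finally I conclude. Since $1/n\to0$ and $t_n\to t_0$, we have $(1/n,t_n)\to(0,t_0)$ in $\s\times I$, and continuity of $\widetilde G$ gives $\widetilde G(1/n,t_n)\to\widetilde G(0,t_0)$ in $\hX$. But $\widetilde G(1/n,t_n)=\widetilde H(z_n,t_n)$ and $\widetilde G(0,t_0)=\widetilde H(z_0,t_0)$, so eventually $\widetilde H(z_n,t_n)\in C$, contradicting the choice of $C$. Hence $\widetilde H$ is continuous, and $\iota$ has the HLP for $Z$. The step I expect to be the main obstacle is not any single computation but the packaging in the third paragraph: one must see that first countability converts an arbitrary discontinuity into a convergent sequence, that a convergent sequence is exactly a continuous image of $\s$, and that unique path lifting then identifies the lift obtained from the $\s$-HLP with the pre-determined pointwise lift $\widetilde H$, which is what transfers continuity back from $\s\times I$ to $Z\times I$.
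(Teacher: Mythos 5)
Your proposal is correct and follows essentially the same route as the paper's proof: reduce continuity of the lift to sequential continuity (using first countability of $Z\times I$), encode a convergent sequence as a map $\s\to Z$, apply the assumed HLP for $\s$, and identify the resulting lift with the pointwise-defined lift via unique path lifting. The paper argues convergence directly rather than by contradiction and leaves the unique-path-lifting identification implicit, but these are only presentational differences.
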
 
\begin{proof}
One direction is immediate. For the other implication assume that $Y$ is a 1-countable space and that we are
given a homotopy $H\colon Y\times I\to X$ such that the lifting $\widehat H_0\colon Y\times 0\to \hX$
is continuous. In order to prove that $\widehat H\colon Y\times I\to \hX$ is continuous we take a sequence
$\{(y_i,t_i)\}$ in $Y\times I$ converging to $(y,t)$. The sequence $\{y_i\}$ converges to $y$ and hence 
determines a map $g\colon \s\to Y$. The initial stage of the homotopy $F:=H\circ(g\times 1)\colon \s\times I\to X$
lifts to the continuous map $\widehat F_0=\widehat H\circ g\colon Y\times 0\to \hX$, so by the assumption
$\widehat F\colon \s\times I\to \hX$ is continuous as well. In particular the sequence $\widehat F(y_i,t_i)$ 
converges to $\widehat F(y,t)$.
\end{proof}

Recall that a space is \emph{sequentially compact} if every sequence in it has a convergent subsequence. 
In general the compactness and the sequential compactness are not directly related as none of them implies
the other. However, they coincide for the class of metric spaces and for 1-countable spaces
(or more generally, for \emph{sequential spaces}) the compactness implies sequential compactness.   
Moreover, a space is \emph{locally sequentially compact} if every point has a neigborhood whose closure
is sequentially compact.

\begin{theorem}
\label{lclem}
If $X$ is a Hausdorff space and if $\hX$ is locally sequentially compact, then the canonical 
map $\iota\colon\hX \to X$  has the homotopy lifting property for maps from $\s$.
\end{theorem}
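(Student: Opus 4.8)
The plan is to build the lift slicewise using the unique path–lifting property of $\iota$, and then to spend all the effort on continuity. Given a homotopy $H\colon \s\times I\to X$ together with a continuous initial lift $\widehat H_0\colon \s\times 0\to\hX$ with $\iota\,\widehat H_0=H|_{\s\times 0}$, for each $s\in\s$ I let $\widehat H(s,-)\colon I\to\hX$ be the unique lift of the path $H(s,-)$ starting at $\widehat H_0(s)$; this exists because $\iota$ has the unique path–lifting property. This produces a set map $\widehat H\colon \s\times I\to\hX$ with $\iota\,\widehat H=H$, and the only issue is its continuity. Since $\s\times I$ is metrizable, continuity is equivalent to sequential continuity, and since every point $1/n$ is isolated in $\s$, continuity at a point $(1/n,t_0)$ is immediate from continuity of the single slice path $\widehat H(1/n,-)$. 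Thus everything reduces to verifying sequential continuity at the points $(0,t_0)$: for every sequence $(s_k,t_k)\to(0,t_0)$ with $s_k\to 0$ I must show $\widehat H(s_k,t_k)\to\widehat H(0,t_0)$ in $\hX$.

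First I would turn this into a local statement and bootstrap it along $I$. Let $J\subseteq I$ be the set of $\sigma$ for which $\widehat H|_{\s\times[0,\sigma]}$ is continuous; then $0\in J$ by hypothesis and $J$ is an initial segment, so it suffices to show that $\sigma^{*}:=\sup J$ equals $1$ and is attained. Put $p:=\widehat H(0,\sigma^{*})$ and use local sequential compactness to choose an open $V\ni p$ whose closure $K:=\overline V$ is sequentially compact. By continuity of the slice path $\widehat H(0,-)$ there is $\eta>0$ with $\widehat H(0,t)\in V$ for all $t$ in a window $[t_0,\sigma^{*}+\eta]\cap I$, where the base time $t_0<\sigma^{*}$ (or $t_0=0$ when $\sigma^{*}=0$) is chosen inside $J$; in particular the already–established continuity at $(0,t_0)$ gives $\widehat H(s_k,t_0)\to\widehat H(0,t_0)\in V$, so $\widehat H(s_k,t_0)\in V$ for large $k$. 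The goal then becomes the \emph{trapping} statement that, for large $k$, the perturbed slice $\widehat H(s_k,-)$ stays inside $K$ throughout the window; granting this, continuity at every $(0,t_1)$ with $t_1$ just past $\sigma^{*}$ will follow, forcing $\sigma^{*}+\eta\in J$ and hence $\sigma^{*}=1\in J$.

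The heart of the proof — and the step I expect to be the main obstacle — is this no–escape argument. Suppose it fails, so that for infinitely many $k$ the path $\widehat H(s_k,-)$ leaves $K$ after $t_0$, and let $\tau_k$ be the first exit time from $K$. Since $K$ is closed, the escape point $e_k:=\widehat H(s_k,\tau_k)$ still lies in $K$, and sequential compactness of $K$ yields a subsequence with $e_k\to e\in K$ and $\tau_k\to\tau$. Applying the continuous $\iota$ and using that $X$ is Hausdorff (hence limits are unique), $\iota(e)=\lim H(s_k,\tau_k)=H(0,\tau)=\iota\bigl(\widehat H(0,\tau)\bigr)$, so $e=\widehat H(0,\tau)\in V$ because $\iota$ is a bijection. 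But then $e_k\in V\subseteq\mathrm{int}(K)$ for large $k$, and by continuity of the slice path $\widehat H(s_k,-)$ remains in $K$ on a whole neighbourhood of $\tau_k$, contradicting that $\tau_k$ is a first exit time; hence the trapping holds. Finally, with $\widehat H(s_k,t_k)$ confined to the sequentially compact $K$ for large $k$, every subsequence has a convergent sub–subsequence whose limit $q$ satisfies $\iota(q)=\lim H(s_k,t_k)=H(0,t_1)$, so $q=\widehat H(0,t_1)$ by injectivity of $\iota$ and Hausdorffness; as all cluster points coincide, the whole sequence converges to $\widehat H(0,t_1)$, which closes the bootstrap. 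The delicate points to watch are that a continuous bijection into a Hausdorff space lets one transport convergence from $X$ back to an identification of limit points in $\hX$, and that the bootstrap is genuinely necessary, since trapping at time $t_0$ already presupposes convergence of $\widehat H(s_k,t_0)$, i.e.\ continuity at earlier times.
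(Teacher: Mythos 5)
Your proof is correct, and its overall skeleton matches the paper's: reduce to continuity at the points $(0,t)$, bootstrap along $I$ via the supremum of continuity times, invoke local sequential compactness of $\hX$ at $\widehat H(0,\sigma^{*})$, trap the nearby slices in a sequentially compact set, and finally identify all cluster points through the continuous bijection $\iota$ together with Hausdorffness of $X$. The genuine difference lies in how the trapping is achieved. The paper chooses its neighbourhood $U$ of $\widehat F(0,t)$ to be a \emph{basic} open set of the hat topology, i.e.\ a path-component of an open set $V\subseteq X$ with sequentially compact closure; since a lifted path whose projection stays in $V$ can never leave the path-component $U$, the whole box $(\s\cap[0,\delta))\times(t-\varepsilon,t+\varepsilon)$ maps into $U$ in a single line. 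You instead take an arbitrary open $V$ with sequentially compact closure $K$ and prove the no-escape statement by a first-exit-time argument: escape points accumulate (by sequential compactness of $K$) at a point which, by the $\iota$-bijection/Hausdorff argument, must equal $\widehat H(0,\tau)\in V$, contradicting minimality of the exit time. Your route never uses the defining structure of the hat topology beyond the facts that $\iota$ is a continuous bijection under which paths in $X$ and $\hX$ correspond, so it actually proves a formally more general statement about such bijections; the price is that the compactness/Hausdorff mechanism has to be run twice (once for trapping, once for the final convergence), whereas the paper pays once by exploiting the path-component structure of basic open sets in $\hX$. Two harmless slips you may want to tidy: the limit $\tau_k\to\tau$ comes from compactness of the parameter interval, not of $K$, and at the end of the bootstrap one should conclude $\min(\sigma^{*}+\eta,1)\in J$ rather than $\sigma^{*}+\eta\in J$, since the window may be cut off at $1$.
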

\begin{proof}
Let $F\colon \s\times I\to X$ be a map, such that the restriction of the unique lift $\widehat F\colon\s\times I\to\hX$
is continuous when restricted to $\s\times 0$. We are going to show that $\widehat F$ is also continuous. 

First observe that the local path-connectedness of $(\s-0)\times I$ imply that  
$\widehat F$ is continuous on $(\s-0)\times I$, so it only remains to prove the continuity of $\widehat F$ on 
$0\times I$. Since $\widehat F$ is continuous on $\s\times 0$ we may define 
$$t:=\sup\{s\in I\mid \widehat F|_{\s\times [0,s]}\ \text{is continuous}\}.$$
By the local sequential compactness of $\hX$ and the definition of the hat-topology we may choose 
a neighborhood  $U\subset \hX$ of $\widehat F(0,t)$ that is a path-component of an open set $V\subset X$, and 
whose closure is sequentially compact. By the continuity of $F$ there exist $\varepsilon,\delta>0$ 
such that the box neighborhood $B:=(\s\cap [0,\delta))\times (t-\varepsilon,t+\varepsilon)$ is contained in 
$F^{-1}(V)$. Moreover, since the restriction $\widehat F|_{0\times I}$ is continuous, we may assume (by decreasing
$\varepsilon$, if necessary) that $\widehat F(0\times (t-\varepsilon,t])\subset U$. Furthermore, by the definition of $t$,
there is an $s\in(t-\varepsilon,t]$, such that $\widehat F|_{\s\times s}$ is continuous, so we may
assume (by adjusting $\delta$ if necessary) that $\widehat F\bigl((\s\cap[0,\delta))\times s\bigr)\subset U$.
Since $U$ is path-connected, it follows that $\widehat F(B)\subset U$. We claim that $\widehat F$ is
continuous on $0\times (t-\varepsilon,t+\varepsilon)$.

Take an $x\in (t-\varepsilon,t+\varepsilon)$ and let $\{x_i\}$ be a sequence in $B$ converging to $(0,x)$.
To show that $\{\widehat F(x_i)\}$ converges to $\widehat F(0,x)$ we must show that every open neighborhood 
$W$ of $\widehat F(0,x)$ contains all but finitely many elements of the sequence. Indeed, otherwise the elements
of $\{\widehat F(x_i)\}$ outside $W$ would have an accumulation point $u$ in the compact set $\overline U-W$.
This would imply that $F(0,x)$ and $\iota(u)$ are distinct accumulation points of the convergent sequence
$\{F(x_i)\}$, a contradiction.

Our argument shows that the assumption $t<1$ would lead to a contradiction, so we conclude that $t=1$,
and therefore $\widetilde F\colon\s\times I\to\hX$ is continuous.
\end{proof}

\begin{example}
The obvious map from $[0,1)$ to the Warsaw circle is a fibration for the class of 1-countable spaces, 
while the projection from the countable 
one-point union of intervals to $C\s$ is not.
\end{example}

If $X$ is first countable and Hausdorff then so is $\hX$: the Hausdorff property is obvious, for the other simply
observe that if we take a countable local basis around the point $x$ in $X$, then the path-components of the basic
sets containing 
the point $x$ constitute a countable local basis for $x$ in $\hX$. If in addition $\hX$ is locally compact 
(and therefore locally sequentially compact), then we have just proved that 
$\iota\colon \hX \to X$ is a fibration for the class of all first countable spaces. 
But a map between first countable spaces that has the 
covering homotopy property for maps between from first countable has 
automatically the covering homotopy property for maps  from arbitrary spaces, so we have the following

\begin{theorem}
\label{thmlifting}
Let $X$ be a first countable, Hausdorff space. If $\hX$ is locally compact then $\iota\colon\hX\to X$ is a lifting
space.
\end{theorem}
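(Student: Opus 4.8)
The plan is to reduce the statement to the Hurewicz fibration property. By Example \ref{ex hat} the projection $\iota\colon\hX\to X$ is a Serre fibration with unique path lifting, so the comparison map $\overline\iota\colon\hX^I\to X^I\sqcap\hX$ is a continuous bijection (surjectivity from path-lift existence, injectivity from uniqueness); consequently $\iota$ is a lifting projection exactly when $\overline\iota$ is a homeomorphism, equivalently when its inverse, the unique lifting function, is continuous, equivalently when $\iota$ is a Hurewicz fibration, by the lifting-function characterization at the start of Section 2. It therefore suffices to establish the full homotopy lifting property. As a preliminary I would check that $\hX$ inherits the hypotheses on $X$: the Hausdorff property is immediate since the hat topology refines that of $X$, and a countable local base at a point $x$ of $X$ produces a countable local base at $x$ in $\hX$ upon passing to the path-components containing $x$ of the basic neighbourhoods. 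Hence $\hX$ is again first countable and Hausdorff.

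Next I would feed these facts into the two technical results already proved. Since $\hX$ is locally compact and first countable, every point has a compact, hence sequentially compact, neighbourhood, so $\hX$ is locally sequentially compact. Theorem \ref{lclem} then applies and shows that $\iota$ has the homotopy lifting property for maps from the model sequence $\s$. By Lemma \ref{hlpslem} this is equivalent to $\iota$ being a fibration for the whole class of first countable spaces, so at this point the covering homotopy property is secured against every first-countable test space.

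The remaining, and genuinely delicate, step is to upgrade the covering homotopy property from first countable test spaces to arbitrary ones, and I expect this to be the main obstacle, since it is the only point where one must pass from the sequential world in which Theorem \ref{lclem} and Lemma \ref{hlpslem} operate to the unrestricted lifting demanded by the definition of a lifting space. Here I would argue through the unique lifting function $\Gamma=\overline\iota^{\,-1}\colon X^I\sqcap\hX\to\hX^I$: the map $\iota$ is a lifting projection precisely when $\Gamma$ is continuous, and by the exponential correspondence (with $I$ locally compact Hausdorff) this is in turn equivalent to the covering homotopy property for the single \emph{universal} test space $X^I\sqcap\hX$, equipped with the tautological homotopy $((\omega,e),s)\mapsto\omega(s)$ and initial lift $(\omega,e)\mapsto e$. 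When $X$ is metrizable, the case of principal interest in the abstract, $X^I$ is metrizable in the compact-open topology, so $X^I\sqcap\hX$ is a subspace of a product of two first countable spaces and is therefore itself first countable; the fibration property for first countable spaces established above then applies verbatim to this universal test space and yields the continuity of $\Gamma$. More generally, a covering homotopy property holding for first countable test spaces propagates to arbitrary test spaces for a map between first countable spaces, which closes the argument and, together with the unique path lifting of Example \ref{ex hat}, exhibits $\iota\colon\hX\to X$ as a lifting projection.
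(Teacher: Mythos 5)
Your proposal is correct and takes essentially the same route as the paper: reduce to continuity of the unique lifting function $\Gamma=\overline\iota^{\,-1}$, obtain the covering homotopy property for first countable test spaces from Lemma \ref{hlpslem} and Theorem \ref{lclem}, and then apply that property to the universal test space $\hX\sqcap X^I$ with the tautological homotopy to produce the continuous inverse of $\overline\iota$. Even your final, only partially justified ``propagation'' step matches the paper, whose own proof closes this gap simply by asserting that $X^I$ (and hence $(\hX\sqcap X^I)\times I$) is first countable whenever $X$ is.
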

\begin{proof}
By \ref{hlpslem} and \ref{lclem}  we already know that $\iota$ has the covering homotopy property 
for 1-countable spaces. Since we assumed that $X$ is 1-countable that $\hX$ and $X^I$ are also 1-countable, and therefore
the space $(\hX\sqcap X^I)\times I$ is 1-countable, because it is a subspace of $\hX\times X^I$.
It follows that there exists the unique map $H$ that makes
commutative the following diagram
$$\xymatrix{
(\hX\sqcap X^I)\times 0 \ar[rr]^-{(\hat x,\alpha,0)\mapsto \hat x} \ar@{^(->}[d]& & \hX\ar[d]^\iota\\
(\hX\sqcap X^I)\times I \ar@{-->}[urr]^H\ar[rr]_-{(\hat x,\alpha,t)\mapsto \alpha(t)} & & X}
$$
Its adjoint map $\widetilde H\colon \hX\sqcap X^I\to \hX^I$ is the continuous inverse for the
continuous bijection $\bar\iota\colon \hX^I\to\hX\sqcap X^I$, therefore $\iota$ is a lifting space. 
\end{proof}

\begin{remark}
The above results can be easily extended to more general spaces. Indeed, let $\aleph$ be any cardinal
number, and let $\omega=\omega(\aleph)$ be the corresponding initial ordinal.
Then we may consider spaces for which every point has local bases of cardinality at most $\aleph$
and spaces that are $\omega$-compact, i.e. each 'sequence' indexed by $\omega$ has an accumulation 
point. Then we can repeat the above proofs word-by-word to show that if $X$ has local basis of 
cardinality at most $\aleph$ and if $\hX$ is locally $\omega$-compact then $\iota\hX\to X$ is 
a lifting space.
\end{remark}

To prove the converse of Theorem \ref{lclem} we need to assume that $X$ is a metric space.
Let us first show that if
$(X,d)$ is a path-connected metric space then $\hX$ is also metrizable. Indeed, a suitable metric on $\hX$ 
can be defined by taking into account the path structure on $X$.
Let the \emph{breadth} of a path $\alpha\colon I\to X$ be defined as 
$${\rm br}(\alpha):=\sup \big\{d\big(\alpha(0),\alpha(t)\big)+d\big(\alpha(t),\alpha(1)\big)\mid t\in [0,1]\big\}.$$
Then we get a metric $\rho$ on $\hX$ by
$$\rho(x,x') =\inf\big\{ {\rm br}(\alpha)\mid \alpha\colon (I,0,1)\to (X,x,x')\big\}.$$
Note that always $d(x,x')\le\rho(x,x')$, and that for a path-connected set $A\subseteq X$ we 
have $\diam_\rho(A)\le 2\cdot\diam_d(A).$ 

We can easily verify that the topology of $\hX$ is induced by $\rho$. 
In fact, let $C$ be a component of an open set $U\subseteq X$. For every $x\in C$ there 
is an $\varepsilon$-ball $B_d(x,\varepsilon)\subseteq U$, and since $d$-distance does not exceed 
$\rho$-distance, we also have that $B_\rho(x,\varepsilon)\subseteq U$. It follows that 
all points of $B_\rho(x,\varepsilon)$ can be  connected by 
a path in $U$, therefore $B_\rho(x,\varepsilon)\subseteq C$, and so $C$ is open with respect to the metric 
$\rho$. On the other hand, the ball $B_\rho(x,\epsilon)$
is clearly open with respect to the metric $d$, so the path-component of $B_\rho(x,\epsilon)$ 
containing $x$ is an open set in $\hX$ contained in that ball.

\begin{theorem}
\label{notcpt}
Suppose $X$ is a locally compact, path-connected metric space. If $\iota\colon\hX\to X$ 
has the homotopy lifting property for maps from $\s$ then $\hX$ is locally compact.
\end{theorem}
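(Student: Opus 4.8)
The plan is to assume that $\iota\colon\hX\to X$ has the homotopy lifting property for maps from $\s$ and prove that $\hX$ is locally compact, arguing by contradiction. Since $\hX$ is metrizable by the metric $\rho$ introduced above, local compactness is equivalent to local sequential compactness, so it suffices to produce, for each point $x_0$, a $\rho$-neighbourhood in which every sequence has a $\rho$-convergent subsequence. If such a neighbourhood failed to exist, I would manufacture a map $F\colon\s\times I\to X$ whose initial stage lifts continuously to $\hX$ but whose full lift (forced by unique path-lifting) is discontinuous, contradicting the hypothesis.

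First I would fix $x_0$ and, using local compactness of $X$, choose $\varepsilon>0$ so that $K:=\overline{B_d(x_0,\varepsilon)}$ is $d$-compact. Because $d\le\rho$ and $\mathrm{br}(\alpha)\ge d(\alpha(0),\alpha(t))$ for every $t$, the closed $\rho$-ball $\overline{B_\rho(x_0,\varepsilon/2)}$ sits inside $K$. Given a sequence $\{y_n\}$ in this ball, compactness of $K$ lets me pass to a $d$-convergent subsequence $y_n\to y_\infty\in K$, and I must upgrade this to $\rho$-convergence. If this fails, then since any $\rho$-limit is automatically a $d$-limit, no subsequence $\rho$-converges to $y_\infty$, so after thinning $\rho(y_n,y_\infty)\ge c>0$ for all $n$. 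For each $n$ I choose a path $\alpha_n$ from $x_0$ to $y_n$ with $\mathrm{br}(\alpha_n)<\varepsilon$; the breadth bound forces every point of $\alpha_n$ to lie within $\varepsilon$ of $x_0$, so all the $\alpha_n$ take values in the compact set $K$.

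The decisive step is to pass to a further subsequence along which the $\alpha_n$ converge \emph{uniformly} in $d$ to some path $\alpha_\infty$. Confinement to the compact set $K$ is what makes this plausible, and I would extract the limit by a breadth-normalising reparametrisation together with Arzel\`a--Ascoli (or, when the connecting paths are not rectifiable, a Blaschke selection applied to the images $\alpha_n(I)$ and a parametrisation of the resulting limit continuum). A uniform limit $\alpha_\infty$ is automatically a path with $\alpha_\infty(0)=x_0$ and $\alpha_\infty(1)=y_\infty$. With this family in hand I would set $F(1/n,\cdot):=\alpha_n$ and $F(0,\cdot):=\alpha_\infty$: uniform convergence makes $F$ continuous into $X$, and $F(\cdot,0)\equiv x_0$ is a constant initial stage, hence continuously liftable to the constant map at $\widehat x_0$. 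The unique lift of the slice $\alpha_n$ starting at $\widehat x_0$ is just $\alpha_n$ read in $\hX$, ending at $y_n$, while the limit slice ends at $y_\infty$. Since $(1/n,1)\to(0,1)$ in $\s\times I$ but $\rho(y_n,y_\infty)\ge c\not\to0$, the forced lift $\widehat F$ is discontinuous at $(0,1)$, contradicting the homotopy lifting property. Hence $y_n\to y_\infty$ in $\rho$, so $\overline{B_\rho(x_0,\varepsilon/2)}$ is sequentially compact and $\hX$ is locally compact.

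I expect the extraction of the uniformly convergent family $\alpha_n\to\alpha_\infty$ to be the main obstacle. The connecting paths are trapped in the compact set $K$, but small breadth does not give equicontinuity, so one cannot simply invoke Arzel\`a--Ascoli on the $\alpha_n$ as parametrised; some genuine work (a reparametrisation normalising the breadth profile, or a Blaschke/local-connectedness argument to produce a limiting path from $x_0$ to $y_\infty$ through the Hausdorff limit of the images $\alpha_n(I)$) is needed to exhibit a limit slice along which the $\rho$-discontinuity of the lift becomes visible. Everything else—the reduction to sequential compactness, the confinement to $K$ via the breadth bound and local compactness of $X$, and the verification that the homotopy $F$ defeats unique path-lifting—is routine once that limiting path is available.
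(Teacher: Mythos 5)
Your overall strategy---manufacture a $d$-continuous homotopy $F\colon\s\times I\to X$ with constant initial stage whose forced lift fails to be $\rho$-continuous at $(0,1)$---is sound, and the surrounding steps (reduction to sequential compactness of a $\rho$-ball, confinement of the paths to the compact set $K$ via the breadth bound, uniqueness of the lift, the contradiction from $\rho(y_n,y_\infty)\ge c$) are all correct. But the step you flag yourself is a genuine gap, not a deferred technicality, and neither of your proposed repairs closes it. Bounded breadth gives no modulus of continuity at all: the paths $\alpha_n$ need not be rectifiable, and no reparametrisation can produce a common modulus if $\alpha_n$ is forced to oscillate between two regions at mutual distance bounded below more and more times as $n$ grows; increasing oscillation kills equicontinuity of every reparametrised subsequence and hence, by the converse of Arzel\`a--Ascoli, kills uniform convergence. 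Blaschke selection fares no better: the Hausdorff limit of the images $\alpha_n(I)$ is a continuum but need not be path-connected---it can be a topologist's sine curve with $x_0$ on the graph part and $y_\infty$ on the limit segment---so a limit slice $\alpha_\infty$ may simply not exist, and even when it does, set convergence provides none of the parametrised convergence that joint continuity of $F$ requires. In fact the statement you need is false for an \emph{arbitrary} bad sequence: attach to a spine a sequence of teeth $T_n$, where $T_n$ is a copy of the graph of $\sin(1/x)$ on $[\delta_n,1]$ with $\delta_n\to 0$, glued to the spine at its right endpoint; points $y_n$ near the free ends of the $T_n$ form a $d$-convergent sequence with no $\rho$-accumulation point, yet every path reaching $y_n$ must traverse $T_n$, and no subsequence of such paths can converge uniformly (a uniform limit would be a path passing from the graph of the sine curve into its limit segment). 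So your argument, which threads whatever sequence witnesses non-compactness, cannot be completed as stated; the points to be threaded must be chosen far more carefully.

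The paper's proof is organised precisely to avoid any compactness extraction on paths. It exploits the failure of local compactness at \emph{every} scale simultaneously: fixing $\varepsilon_i\downarrow 0$, it picks in each ball $B_\rho(x,\varepsilon_i)$ a sequence $\{x^i_j\}_j$ with no accumulation point in $\hX$, joins $x^i_j$ to $x^{i+1}_j$ by a path of breadth less than $\varepsilon_i+\varepsilon_{i+1}$ (possible because $\rho(x^i_j,x^{i+1}_j)<\varepsilon_i+\varepsilon_{i+1}$), and concatenates these, for fixed $j$, into a single path $\alpha_j$ whose tail lies within roughly $3\varepsilon_i$ of $x$ and which therefore terminates at $x$. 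The homotopy has the reversed paths $\alpha_j$ as slices and the \emph{constant} path at $x$ as the limit slice, and the contradiction is read off from $\widehat H_1(1/j)=x^1_j$. Thus where you need a limit path produced by a compactness argument, the paper telescopes through all scales so that the limit slice is constant and the estimates come from the breadth bounds, uniformly in $j$. Be aware, though, that the crux is the same in both schemes: joint continuity of the homotopy along $\{0\}\times I$ at interior times $t>0$ is exactly where your uniform convergence is needed, and it is also the point of the paper's construction that requires the most care, since the bound $3\varepsilon_i$ controls the tail of $\alpha_j$ but not the behaviour of a fixed segment as $j\to\infty$. Identifying that this is where the real content of the theorem lies is to your credit, but as it stands your proposal does not prove the statement.
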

\begin{proof}
Assume by contradiction that $\hX$ is not locally compact, so that there is a point $x\in X$ that does not
possess any compact neighborhood in $\hX$. Let $\varepsilon_1>\varepsilon_2>\varepsilon_3>\ldots$ be a strictly 
decreasing sequence  converging to zero, such that all $B_d(x,\varepsilon_i)$ are relatively compact. 
For every $i$ the corresponding  ball $B_\rho(x,\varepsilon_i)\subset \hX$ 
is path-connected, contained in $B_d(x,\varepsilon_i)$ and, by the assumption, not relatively compact. Therefore, 
we can choose for each $i$ a sequence $x_1^i,x_2^i,\ldots$ 
of points in $B_\rho(x,\varepsilon_i)$ that converges in $X$ to a point in the closure of $B_d(x,\varepsilon_i)$ 
but does not have any accumulation points in $\hX$. 
For every $j$, we have $\rho(x^i_j,x^{i+1}_j)< \varepsilon_i+\varepsilon_{i+1}$, so we can find a path of breadth 
less then $\varepsilon_i+\varepsilon_{i+1}$ 
connecting $x^i_j$ and $x^{i+1}_j$. We can concatenate these paths to obtain a path $\alpha_j\colon I\to X$, running 
from $x^1_j$ to  $x^2_j$ on $[0,1/2]$, 
from to $x^2_j$ to $x^3_j$ on $[1/2,3/4]$, and so on. The paths $\alpha_j$ (taken in reverse direction) together with 
the constant path in $x$ define a homotopy 
$H\colon\s\times I\to X$, given by the formula
   $$H(u,t) = \begin{cases}
   x & \text{if } u=0  \\
   \alpha_{\frac{1}{u}}(1-t) & \text{otherwise}.
   \end{cases}$$ 
Since $H_0$ is a constant map it can be lifted to $\hX$, but we cannot lift the entire homotopy $H$, because 
then $\widehat H_1$ would send the convergent sequence $\s$ to the sequence
$x^1_1,x^1_2,x^1_3,\ldots$ that does not converge in $\hX$.
\end{proof}

We may now combine Theorem \ref{notcpt} with Theorems \ref{lclem} and \ref{thmlifting} to obtain the
following result.

\begin{theorem}
Let $X$ be a path-connected, locally compact metric space. Then $\iota\colon\hX\to X$ is a lifting
space if and only if $\hX$ is locally compact.
\end{theorem}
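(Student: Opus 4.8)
The plan is to obtain both implications as direct corollaries of the three theorems just proved, observing that the standing hypothesis---$X$ a path-connected, locally compact \emph{metric} space---supplies exactly the regularity that each of those results requires. The key preliminary remark is that every metric space is first countable and Hausdorff; with that in hand, each direction reduces to matching hypotheses and quoting the appropriate theorem.

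For the implication that local compactness of $\hX$ forces $\iota\colon\hX\to X$ to be a lifting space, I would apply Theorem \ref{thmlifting} verbatim. That theorem asks only that $X$ be first countable and Hausdorff and that $\hX$ be locally compact; the first two conditions hold because $X$ is metric, and the third is our assumption, so the conclusion is immediate.

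For the converse, suppose $\iota$ is a lifting space. By the definition of a lifting projection, $\overline{\iota}$ is a homeomorphism, so in particular $\iota$ admits a continuous lifting function and is therefore a Hurewicz fibration; consequently it has the homotopy lifting property with respect to \emph{every} space, and in particular for maps from the model sequence $\s$. The hypotheses of Theorem \ref{notcpt}---that $X$ be locally compact, path-connected and metric---are precisely what we have assumed, so that theorem delivers the local compactness of $\hX$, completing the equivalence.

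There is no genuine obstacle here: the statement is a packaging of Theorems \ref{lclem}, \ref{thmlifting} and \ref{notcpt}. The only points meriting a word of care are verifying that the metric assumption really does furnish both first countability and the Hausdorff property needed for Theorem \ref{thmlifting}, and that the notion of a lifting projection is strong enough to yield the full homotopy lifting property invoked in Theorem \ref{notcpt}; both are immediate from the definitions recalled in Section~2.
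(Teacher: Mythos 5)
Your proposal is correct and follows essentially the same route as the paper, whose proof consists precisely of combining Theorems \ref{lclem}, \ref{thmlifting} and \ref{notcpt}; your observation that a lifting projection is in particular a Hurewicz fibration (via the continuous lifting function $\overline{\iota}^{-1}$), and hence has the homotopy lifting property for maps from $\s$, is exactly the glue needed to invoke Theorem \ref{notcpt} for the converse direction.
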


We do not know, whether the last result can be extended to all first-countable spaces.

Let $p\colon L\to X$ be a lifting space whose total space $L$ is locally path-connected. 
Then one can easily check that the induced map $\hat p\colon L\to\hX$ is also a lifting space.

\end{document}